\theoremstyle{plain}
\newtheorem{theorem}{Theorem}[section]
\newtheorem{lemma}[theorem]{Lemma}
\newtheorem{corollary}[theorem]{Corollary}
\newtheorem{proposition}[theorem]{Proposition}
\theoremstyle{definition}
\newtheorem{definition}[theorem]{Definition}
\theoremstyle{remark}
\newtheorem{remark}[theorem]{Remark}
\numberwithin{equation}{section}
\def\bold#1{\mbox{\boldmath $#1$}}
\newcommand{\uu}[1]{\bold{#1}}
\newcommand{\abs}[1]{\lvert#1\rvert}
\newcommand{\D}{\partial}
\newcommand{\dd}{\mathrm{d}}
\newcommand{\dive}{\mathrm{div}}
\newcommand{\bdive}{\uu{\mathrm{div}}}
\newcommand{\Dt}{\partial_t}
\newcommand{\grd}{\nabla}
\newcommand{\bgrd}{\uu{\nabla}}
\newcommand{\mbb}{\mathbb}
\newcommand{\mcal}{\mathcal}
\newcommand{\half}{\frac{1}{2}}
\newcommand{\veps}{\varepsilon}
\newcommand{\s}{\sigma}
\newcommand{\norm}[1]{\left \lVert #1 \right \rVert}
\newcommand{\M}{\mcal{M}}
\newcommand{\E}{\mcal{E}}
\newcommand{\Q}{\mcal{Q}}
\newcommand{\Ds}{D_\sigma}
\newcommand{\Ek}{\mcal{E}(K)}
\newcommand{\Lm}{L_{\M}(\Omega)}
\newcommand{\Hez}{\uu{H}_{\E,0}(\Omega)}
\newcommand{\dt}{\delta t}
\newcommand{\chark}{\mcal{X}_K}
\newcommand{\fesig}{F_{\epsilon , \sigma}}
\newcommand{\divm}{\dive_{\M}}
\newcommand{\intr}{\mathrm{int}}
\newcommand{\extr}{\mathrm{ext}}
\newcommand{\diam}{\text{diam}}
\newcommand{\absq}[1]{\abs{#1}^2}
\def\bold#1{\mbox{\boldmath $#1$}}
\newcommand\ubar[1]{%
\underaccent{\bar}{#1}}
\title[An AP Scheme for the EPB System]{An Asymptotic Preserving Scheme for the Euler-Poisson-Boltzmann System in the Quasineutral Limit}
\author[Arun]{K.\ R.\ Arun\scalebox{1.2}{\orcidlink{0000-0001-9676-861X}}}
\address{School of Mathematics, Indian Institute of Science Education and Research Thiruvananthapuram, Thiruvananthapuram 695551, India} 
\email{arun@iisertvm.ac.in, rahuldev19@iisertvm.ac.in}
\thanks{K.R.A.\ acknowledges the support from Science and Engineering Research Board, Department of Science \& Technology, Government of India through grant CRG/2021/004078.}
\author[Ghorai]{R.\ Ghorai\,\scalebox{1.2}{\orcidlink{0000-0003-0549-3417}}}
\thanks{R.G.\ would like to thank the Ministry of Education, Government of India for the PMRF fellowship support.}
\date{\today}
\subjclass{35L45, 35L65, 35Q31, 65M08, 76M12}
\keywords{Euler-Poisson-Boltzmann system, Isothermal Euler system, Quasineutral limit, Finite volume method, Asymptotic preserving, Energy stability}
\begin{document}

\begin{abstract}
    In this paper, we study an asymptotic preserving (AP), energy stable and positivity preserving semi-implicit finite volume scheme for the Euler-Poisson-Boltzmann (EPB) system in the quasineutral limit. The key to energy stability is the addition of appropriate stabilisation terms into the convective fluxes of mass and momenta, and the source term. The space-time fully-discrete scheme admits the positivity of the mass density, and is consistent with the weak formulation of the EPB system upon mesh refinement. In the quasineutral limit, the numerical scheme yields a consistent, semi-implicit discretisation of the isothermal compressible Euler system, thus leading to the AP property. Several benchmark numerical case studies are performed to confirm the robustness and efficacy of the proposed scheme in the dispersive as well as the quasineutral regimes. The numerical results also corroborates scheme's ability to very well resolve plasma sheaths and the related dynamics, which indicates its potential to applications involving low-temperature plasma problems. 
\end{abstract}

\maketitle

\section{Introduction}
\label{sec:intro}

A plasma consists of positively charged ions, negatively charged electrons and possibly neutral particles whose evolution is coupled through collision processes and long-range electromagnetic forces. The physics of plasma is a wide-ranging domain with applications many different fields, such as astrophysics, magnetic fusion, laser physics, spatial plasmas, to name but a few. The mathematical and numerical modelling of plasma primarily use two classes of models: kinetic models and fluid models. Within a kinetic framework, the plasma is characterised by a distribution function describing the behaviour of constituent particles in a phase space. In contrast, the fluid models rely on macroscopic parameters, such as the plasma particle mass density, the average velocity and the pressure. Although the kinetic models are typically more accurate than the fluid models, their practical application in numerical simulations is constrained by the computational expenses and the complexities inherent to high-dimensional phase spaces, and the need to use more involved approximation strategies. For an extensive study of plasma physics, various mathematical models used, and the associated intricacies, we refer the interested reader to \cite{Che13, GKP19, KT86}.

Among the different fluid models of plasma, the pressureless Euler-Poisson-Boltzmann (EPB) system is commonly used to describe the dynamics of cold plasma ions when the ion temperature is notably smaller than the electron temperature \cite{Che13, DLS+12, TK73}. The EPB system can be deduced from a two-species hydrodynamic model of plasma after making the assumptions that the ions have negligible temperature and the electrons have almost zero mass; see, e.g.\ \cite{DLS+12} for details. The zero electron mass assumption in turn yields a Boltzmann relation between the electron density and the electrostatic potential. Consequently, the system of governing equations consists of the conservation laws of mass and momenta for the ions and a nonlinear Poisson equation for the electrostatic potential. In plasma physics problems, the EPB model is widely used to capture the flow dynamics concealed within plasma-sheath transition layers and inner sheath layers in the case of planar plasma motions \cite{LS04}. From a mathematical perspective, the EPB system is quite intriguing since it admits multi-valued solutions as demonstrated in the studies \cite{LW07b,LW07a}. Yet another complex behaviour exhibited its solutions is the existence of multiple scales in time and space. The multiscale nature of the solutions is well understood once the equations are cast in non-dimensional variables. The non-dimensionalisation of the EPB system leads to one of the most important characteristic parameters in plasma physics, known as the Debye length, which represents the characteristic distance over which the electric field and the charges are screened in a plasma. Depending on the problem, the Debye length can be almost equal to a characteristic length of physical interest or it can be much smaller. In other words, the Debye length when scaled by a typical length scale of the problem, can quantify the multiscale behaviour of the solutions. Thus, it is instructive to analyse the EPB system parameterised by a scaled Debye length, say $\veps$. When $\veps=O(1)$, the flow regime is dispersive in nature due to the non-linear coupling of the electric potential term and the convective terms. In this regime, the EPB system is hyperbolic-elliptic in nature and its solutions can develop singularities; see \cite{BCK24} for details. Furthermore, in the dispersive regime, the system of equations is weakly unstable as the hydrodynamic part is not strictly hyperbolic due to the absence of the pressure term. In contrast, when $\veps\to 0$, referred to as the quasineutral regime in the literature, the EPB system approximates the isothermal compressible Euler (ICE) system \cite{CG00,FZ10,Xue14} which is strongly hyperbolic in nature. Hence, the quasineutral limit is a singular limit for the EPB system and accordingly, its numerical simulation in the quasineutral regime turns out to be a challenging task.

Explicit numerical schemes encounter instabilities and loss of accuracy in the quasineutral regime unless their time and space steps resolve the small parameter $\veps$. Furthermore, the stability restrictions bring in stiffness when $\veps\to0$. In order to overcome such impediments, we take recourse to the so-called `Asymptotic Preserving' (AP) methodology which was originally introduced in the context of diffusive limits of kinetic transport equations \cite{Jin99, Jin12}. The AP framework provides a robust numerical approximation platform for treating singularly perturbed problems governed by partial differential equations and it has been successfully implemented for the quasineutral limit in the last decade; see \cite{AGK24,Deg13,DLS+12} and the references therein. The working principle behind the AP methodology can be described as follows. Let $\mcal{P}_{\veps}$ denote a singularly perturbed problem with perturbation parameter $\veps$ and let $\mcal{P}_{0}$ be a well-posed limit problem obtained from $\mcal{P}_{\veps}$ in the limit $\veps \to 0$. A consistent discretisation $\mcal{P}_{\veps}^{h}$ for $\mcal{P}_{\veps}$, with $h$ being a space-time discretisation parameter, is said to be AP if     
\begin{enumerate}[label=(\roman*)]
\item $\mcal{P}_{\veps}^{h}$ converges to a consistent discretisation $\mcal{P}_{0}^{h}$ of the limit problem $\mcal{P}_{0}$ as $\veps \to 0$, and 
\item the stability constraints on the space-time discretisation parameter $h$ remain independent of $\veps$. 
\end{enumerate}
Therefore, the AP discretisation captures the asymptotic behaviour of a singularly perturbed problem at the discrete level. It can automatically emulate the regime shifts with respect to $\veps$ without any need to enforce transition layers or matching conditions. Consequently, the AP methodology proves to be an obvious choice for the numerical approximation of the EPB system in all regimes ranging from quasineutral to dispersive.

Implicit-explicit (IMEX) or semi-implicit time discretisations are shown to overcome several of the shortcomings of explicit time-stepping schemes and maintain the AP property when applied to the numerical approximation of stiff hydrodynamic models; see \cite{BAL+14,BJR+19,CDV05,CDV07,DLV08,DT11,HLS21} for some developments in this direction. IMEX time discretisations have the advantage of avoiding the costly inversion of dense and large matrices, typical of fully implicit schemes. At the same time, they can overcome the stringent time-step restrictions required by explicit schemes, thereby achieving computational efficiency. In the literature, one can find several attempts to derive semi-implicit AP schemes for the Euler-Poisson (EP) and the EPB systems with varying implicit treatments of the fluxes and the source term; see \cite{AGK24,CDV05,CDV07,DLS+12} and the references cited therein. The space discretisation in all these papers follow a finite volume framework. Since the quasineutral limit of the EP system yields the incompressible Euler equations, a natural choice for the finite volume grid is a standard one used by incompressible solvers. Accordingly, the AP finite volume scheme proposed in \cite{AGK24} for the EP system uses the so-called `Marker and Cell' (MAC) grid. The MAC grid has the advantage that the elliptic problems arising from reformulations of the implicit terms is stable \cite{NJJ+09}, which is pivotal in establishing the asymptotic limit of the discrete equations. Another crucial requirement while dealing with the quasineutral limit or any other singular limit of hydrodynamic models in general is to maintain the energy stability. The energy stability of a scheme not only ensures physically realistic numerical solutions but also provides apriori estimates that are essential to carry out thorough asymptotic convergence analyses. 

In the present work, our aim is to devise a semi-implicit in time, finite volume in space, AP and energy stable scheme for the EPB system which is valid across the different regimes ranging from dispersive to quasineutral. The key to energy stability is a stabilisation technique which was introduced in \cite{CDV17, DVB17, DVB20} for the shallow water equations and later pursued and analysed in \cite{AGK24,AGK23,AK24b,AK24a} for different hydrodynamic models. In the present case, the stabilisation technique involves introducing a shift in the mass and momentum convective fluxes as well as in the source term. These shift terms play a pivotal role in stabilising the flow via the dissipation of mechanical energy, at the same time respecting CFL-type numerical stability conditions. A formal analysis of the continuous equations with the added shift terms reveals that the momentum shift must be proportional to the gradient of the electrostatic potential and the source shift, to the divergence of the velocity. The time discretisation follows an IMEX approach wherein the fluxes are treated explicitly while the sources implicitly. In space, a staggered finite volume technique on a MAC grid is adopted. The application of certain tools from topological degree theory and a discrete comparison principle for the nonlinear Poisson equation, the energy stability in turn yields the existence of a discrete solution of the space-time fully-discrete, semi-implicit numerical scheme. The overall scheme is shown to be consistent with the weak formulation of the EPB system, {\`a} la Lax-Wendroff for a fixed $\veps$, as the space-time discretisation parameters goes to zero. Furthermore, we establish the consistency of the scheme with the quasineutral ICE limit system when $\veps\to 0$, after making some reasonable boundedness assumptions. At the end, several benchmark numerical case studies carried out to validate the theoretical findings. The rest of this paper is organised as follows. The scaled EPB system, its quasineutral limit and the stabilisation technique are introduced in Section~\ref{sec:cont-case}. Section~\ref{sec:MAC_disc_diff} deals with the space discretisation, the discrete differential operators and the construction of numerical fluxes. The presentation of the numerical scheme and its stability attributes are taken up in Section~\ref{sec:stable_scheme}. Sections \ref{sec:weak_cons} and \ref{sec:quas_lim} are devoted to proving the two major consistency results. In Section~\ref{sec:num_res}, we present the results of numerical experiments, and finally we close the paper with some concluding remarks in Section~\ref{sec:conclusion}.   

\section{Euler-Poisson-Boltzmann System and the Quasineutral Limit}
\label{sec:cont-case}

As mentioned in Section\,\ref{sec:intro}, the EPB system describes the dynamics of cold ions in a collisionless regime of plasma. The system of governing equations consists of the conservation laws of mass and momentum, where the latter contains an electrostatic forcing term. The electric potential is coupled with the ion and electron densities through a nonlinear Poisson equation. The EPB system comprises the following coupled equations:
\begin{subequations}
\label{eq:EPB_model}
\begin{gather}
\partial_{t}\rho+\dive\left(\rho\uu{u}\right)=0, \\
m\left[\partial_{t}\left(\rho \uu{u}\right)+\bdive (\rho \uu{u} \otimes \uu{u})\right]=-e \rho \grd\phi, \\
\quad-\epsilon_{0} \Delta \phi=e\left(\rho-\rho_{e}\right),
\end{gather}
\end{subequations}
for $(t, \uu{x})\in Q_T:=(0, T ) \times \Omega$, where $T>0$ and $\Omega$ is an open, bounded and connected subset of $\mathbb{R}^d$, $d=1,2,3$. The dependent variables $\rho=\rho(t,\uu{x})>0$, $\uu{u}=\uu{u}(t,\uu{x})\in\mbb{R}^d$ and $\phi=\phi(t,\uu{x})\in\mbb{R}$ are, respectively, the ion density, the average ion velocity and the electrostatic potential. The constants $m$, $\epsilon_0$ and $e$ denote, respectively, the ion mass, the vacuum permittivity and the positive elementary charge. The electron density $\rho_{e}$ is assumed to satisfy the following Boltzmann relation:
\begin{equation}
\label{eq:boltz_relation}
    \rho_{e} = \rho_0 \exp\Big(\frac{e\phi}{k_{B}T}\Big), 
\end{equation}
where $\rho_0$ is the ion density chosen at a reference point inside the plasma, $k_{B}$ is the Boltzmann constant and $T$ is the electron temperature which is assumed to be uniform and constant in time. In the literature, the system \eqref{eq:epb} is referred to as the pressureless one-fluid model of ions, with the electron density satisfying the Boltzmann relation. 

In plasma, the characteristic distance over which electrostatic effects persist between charged particles is usually quantified by a significant physical scale within the system \eqref{eq:EPB_model}, known as the Debye length, defined by 
\begin{equation}
\label{eq:veps_DL}
    \veps_{D}=\left(\frac{\epsilon_0 k_B T}{e^2 \rho_0}\right)^{1/2}.
\end{equation}
The Debye length is the typical spatial scale associated with the electrostatic interaction between particles. In many practical situations of interest, the Debye length can be much smaller than a characteristic length scale associated with the problem, e.g.\ the distance between two electrodes. Subsequently, the co-existence of two different length scales within same problem contributes to a multiscale nature of the flow characteristics. In order to further elucidate the multiscale behaviour of the EPB system \eqref{eq:EPB_model}, we introduce a scaling of the system \eqref{eq:EPB_model} along the lines of \cite{Deg13}. We choose a reference length scale $L$ and non-dimensionalise the independent variables via $\bar{x}=\frac{x}{L}$ and $\bar{t}=\frac{t u_{0}}{L}$, where $u_{0}\equiv u_B=\sqrt{\frac{k_{B} T}{m}}$ is the Boltzmann velocity scale, also known as the sound velocity in plasma. We take $\rho_0$ to be a reference density and define the scaled dependent variables as $\bar{\rho}=\frac{\rho}{\rho_{0}}$, $\bar{u}=\frac{u}{u_{0}}$, $\bar{\phi}=\frac{e \phi}{m u_{0}^{2}}$. Inserting these non-dimensional variables into the equations \eqref{eq:EPB_model} and omitting the bars for convenience yields the following non-dimensional EPB system:  
\begin{subequations}
\label{eq:epb}
\begin{align}
  \D_t\rho^\veps+\dive (\rho^\veps\uu{u}^\veps)&=0, \label{eq:cons_mas}\\ 
  \D_t(\rho^\veps\uu{u}^\veps)+\bdive
  (\rho^\veps\uu{u}^\veps\otimes\uu{u}^\veps) &=-\rho^\veps\bgrd\phi^\veps, \label{eq:cons_mom}\\
  -\veps^{2}\Delta\phi^\veps&=\rho^\veps-e^{\phi^\veps}. \label{eq:poisson-boltz}
\end{align}
\end{subequations}
Here, $\veps = \frac{\veps_D}{L}$ is the scaled Debye length and throughout this paper we assume that it is an infinitesimal taking values in $(0,1]$. The system \eqref{eq:epb} is supplemented with the following initial and boundary conditions:
\begin{equation}
     \rho^\veps\vert_{t=0}=\rho^\veps_0, \quad \uu{u}^\veps\vert_{t=0}=\uu{u}^\veps_0, \quad \uu{u}^\veps\cdot\uu{\nu}\vert_{\partial\Omega}=0, \quad \bgrd\phi^\veps\cdot\uu{\nu}\vert_{\partial\Omega}=0,\label{eq:eq_ic-bc}
\end{equation}
where $\uu{\nu}$ denotes the unit outward normal to the boundary. 

It is well-established that smooth solutions to nonlinear hyperbolic equations fail to exist globally in time which necessitates the concept of weak solutions. In the following, we define a weak solution of the system \eqref{eq:epb}. 
\begin{definition}
\label{def:weak_soln_defn}
    We say that a triple $(\rho^\veps, \uu{u}^\veps,\phi^\veps)$ is a weak solution to the EPB system \eqref{eq:epb} with initial-boundary conditions \eqref{eq:eq_ic-bc} if
    \begin{enumerate}[label=(\roman *)]
        \item $\rho^\veps>0$ a.e.\ in $Q_T$, $\rho^\veps\in L^{\infty}(Q_T)$, $\uu{u}^\veps\in L^{\infty}(Q_T)^{d}$ and $\phi^\veps\in L^{\infty}(0,T;H^1(\Omega))$ with the boundary condition $\bgrd\phi^\veps\cdot\uu{\nu}\vert_{\partial\Omega}=0$ satisfied in the sense of distributions.
        \item The integral identity
        \begin{equation}
        \int_0^T\int_\Omega\left(\rho^\veps\D_t\psi+\rho^\veps \uu{u}^\veps\cdot\grd \psi\right)\dd\uu{x}\dd t =-\int_\Omega\rho^\veps_0\psi(0,\cdot)\dd\uu{x} \label{eq:weak_soln_mas}
        \end{equation}
        holds for all  $\psi \in C_c^\infty([0,T)\times\bar{\Omega})$.
        \item The integral identity
        \begin{gather}
        \int_0^T\int_\Omega\left(\rho^\veps \uu{u}^\veps\cdot\D_t\uu{\psi}+(\rho^\veps \uu{u}^\veps\otimes\uu{u}^\veps):\bgrd \uu{\psi}\right)\dd\uu{x}\dd t =-\int_0^T\int_\Omega\rho^\veps\grd{\phi^\veps}\cdot\uu{\psi}\dd\uu{x}\dd t\nonumber\\
        -\int_{\Omega}\rho^\veps_0 \uu{u}^\veps_0\cdot\uu{\psi}(0,\cdot)\dd\uu{x},
        \label{eq:weak_soln_mom}
        \end{gather}
        holds for all  $\uu{\psi} \in C_c^\infty([0,T)\times\bar{\Omega})^d$.
        \item The integral identity
        \begin{equation}
        \veps^2\int_0^T\int_\Omega\grd\phi^\veps\cdot\grd \psi\dd\uu{x}\dd t =\int_0^T\int_\Omega(\rho^\veps-e^{\phi^\veps})\psi\dd\uu{x}\dd t \label{eq:weak_soln_poisson}
        \end{equation}
        holds for all  $\psi \in C_c^\infty([0,T)\times\bar{\Omega})$.        
    \end{enumerate}
\end{definition}

\subsection{Apriori Energy Estimates}
\label{sec:apr_est}

In order to carry out the analysis of a numerical scheme for the EPB system \eqref{eq:epb}, a few apriori energy estimates are essential. We start with recalling the energy identities satisfied by classical solutions of \eqref{eq:epb}. 

\begin{proposition}
  \label{prop:engy_balance}
  The classical solutions of \eqref{eq:cons_mas}-\eqref{eq:poisson-boltz} satisfy the following identities:
  \begin{itemize}
  \item a potential energy identity
    \begin{equation}
      \label{eq:potbal}
     \Dt\Big(\frac{\veps^2}{2}\abs{\bgrd\phi^\veps}^{2}+(\phi^\veps-1)e^{\phi^\veps}\Big)
      -\veps^2\dive(\phi^\veps\Dt(\bgrd\phi^\veps)=-\dive(\rho^\veps\uu{u}^\veps)\phi^\veps,
    \end{equation}
  \item a kinetic energy identity
    \begin{equation}
      \label{eq:kinbal}
      \Dt\Big(\frac{1}{2}\rho^\veps{\abs{\uu{u}^\veps}}^2\Big)
      +\dive\Big(\frac{1}{2}\rho^\veps{\abs{\uu{u}^\veps}}^2\uu{u}^\veps\Big)
      = -\rho^\veps\uu{u}^\veps\cdot\bgrd\phi^\veps.
    \end{equation}
  Adding \eqref{eq:potbal} and \eqref{eq:kinbal} we get
  \item the total energy identity
  \begin{align}
  \label{eq:eng_id}
  \Dt\Big(\frac{\veps^2}{2}\abs{\bgrd\phi^\veps}^{2}+(\phi^\veps-1)e^{\phi^\veps}+\frac{1}{2}\rho^\veps{\abs{\uu{u}^\veps}}^2\Big)
      +\dive\Big((\rho^\veps\phi^\veps+\frac{1}{2}\rho^\veps{\abs{\uu{u}^\veps}}^2)\uu{u}^\veps-\veps^2\phi^\veps\Dt(\bgrd\phi^\veps)\Big)=0.      
  \end{align}
  \end{itemize}
\begin{proof}
Proof of \eqref{eq:kinbal} is classical. To prove \eqref{eq:potbal}, we multiply \eqref{eq:cons_mas} by $\phi^\veps$ and write
\begin{align*}
0&=(\D_t\rho^\veps)\phi^\veps+\dive (\rho^\veps\uu{u}^\veps)\phi^\veps\\
&=(\D_t(\rho^\veps-e^{\phi^\veps}))\phi^\veps+\D_t(e^{\phi^\veps})\phi^\veps+\dive (\rho^\veps\uu{u}^\veps)\phi^\veps ,\\
&=-\veps^2(\D_t(\Delta\phi^\veps))\phi^\veps+\D_t(e^{\phi^\veps}\phi^\veps)-e^{\phi^\veps}\D_t(\phi^\veps)+\dive (\rho^\veps\uu{u}^\veps)\phi^\veps ,\text{  using \eqref{eq:poisson-boltz}}\\
&=\Dt\Big(\frac{\veps^2}{2}\abs{\bgrd\phi^\veps}^{2}\Big)+\D_t((\phi^\veps-1)e^{\phi^\veps})
      -\veps^2\dive(\phi^\veps\Dt(\bgrd\phi^\veps)+\dive(\rho^\veps\uu{u}^\veps)\phi^\veps.
\end{align*}
Rearranging the terms gives \eqref{eq:potbal}.
\end{proof}
\end{proposition}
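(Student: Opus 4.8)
The plan is to establish the three identities in the order kinetic, potential, total, obtaining the last simply by summation. For the kinetic energy identity \eqref{eq:kinbal}, I would first recast the conservative momentum equation \eqref{eq:cons_mom} in non-conservative (advective) form using the mass balance \eqref{eq:cons_mas}, namely $\rho^\veps\big(\Dt\uu{u}^\veps+(\uu{u}^\veps\cdot\bgrd)\uu{u}^\veps\big)=-\rho^\veps\bgrd\phi^\veps$, and then take the scalar product with $\uu{u}^\veps$. Using $\uu{u}^\veps\cdot\Dt\uu{u}^\veps=\tfrac12\Dt\abs{\uu{u}^\veps}^2$ and $\uu{u}^\veps\cdot(\uu{u}^\veps\cdot\bgrd)\uu{u}^\veps=\tfrac12\,\uu{u}^\veps\cdot\bgrd\abs{\uu{u}^\veps}^2$ produces a non-conservative kinetic balance; adding $\tfrac12\abs{\uu{u}^\veps}^2$ times \eqref{eq:cons_mas} (which vanishes) recombines the terms into the conservative divergence form of \eqref{eq:kinbal}. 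This is the routine, classical part.

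For the potential energy identity \eqref{eq:potbal}, the idea is to multiply the mass balance \eqref{eq:cons_mas} by the potential $\phi^\veps$ and to eliminate $\rho^\veps$ via the Poisson-Boltzmann relation \eqref{eq:poisson-boltz}, which reads $\rho^\veps=e^{\phi^\veps}-\veps^2\Delta\phi^\veps$. Splitting $\D_t\rho^\veps=\D_t(\rho^\veps-e^{\phi^\veps})+\D_t(e^{\phi^\veps})$ separates the computation into an electrostatic piece and a Boltzmann piece. For the Boltzmann piece I would verify the chain-rule identity $\D_t(e^{\phi^\veps})\,\phi^\veps=\D_t\big((\phi^\veps-1)e^{\phi^\veps}\big)$. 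For the electrostatic piece the key steps are to commute the derivatives, $\D_t\Delta\phi^\veps=\dive(\D_t\bgrd\phi^\veps)$, and then to apply the divergence product rule $\phi^\veps\dive(\D_t\bgrd\phi^\veps)=\dive(\phi^\veps\,\D_t\bgrd\phi^\veps)-\bgrd\phi^\veps\cdot\D_t\bgrd\phi^\veps$, recognising $\bgrd\phi^\veps\cdot\D_t\bgrd\phi^\veps=\tfrac12\Dt\abs{\bgrd\phi^\veps}^2$. Collecting the resulting time-derivative term $\tfrac{\veps^2}{2}\Dt\abs{\bgrd\phi^\veps}^2$ and flux term $-\veps^2\dive(\phi^\veps\,\D_t\bgrd\phi^\veps)$ and rearranging yields \eqref{eq:potbal}.

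The total energy identity \eqref{eq:eng_id} then follows by adding \eqref{eq:potbal} and \eqref{eq:kinbal}. The only point requiring attention is the combination of the two right-hand sides: $-\dive(\rho^\veps\uu{u}^\veps)\,\phi^\veps-\rho^\veps\uu{u}^\veps\cdot\bgrd\phi^\veps$ must be collapsed into the single flux $-\dive(\rho^\veps\uu{u}^\veps\phi^\veps)$ via the product rule, so that the two electrostatic coupling terms cancel and the entire balance assembles into a pure divergence, as displayed.

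I expect the main obstacle to be the bookkeeping in the electrostatic piece of the potential balance: one must correctly allocate which contributions form the time derivative and which form the flux $-\veps^2\dive(\phi^\veps\,\D_t\bgrd\phi^\veps)$, being careful that the space and time derivatives genuinely commute and that the manipulation $\phi^\veps\dive(\D_t\bgrd\phi^\veps)=\dive(\phi^\veps\,\D_t\bgrd\phi^\veps)-\bgrd\phi^\veps\cdot\D_t\bgrd\phi^\veps$ is used purely as a pointwise product identity (no boundary terms are generated, since nothing is integrated). By comparison, the Boltzmann and kinetic pieces are mechanical.
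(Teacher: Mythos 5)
Your proposal is correct and follows essentially the same route as the paper: multiply the mass balance by $\phi^\veps$, split $\D_t\rho^\veps=\D_t(\rho^\veps-e^{\phi^\veps})+\D_t(e^{\phi^\veps})$ via the Poisson--Boltzmann relation, treat the Boltzmann piece with the chain rule and the electrostatic piece by commuting derivatives and the pointwise product rule, then add the kinetic identity and collapse $-\dive(\rho^\veps\uu{u}^\veps)\phi^\veps-\rho^\veps\uu{u}^\veps\cdot\bgrd\phi^\veps$ into $-\dive(\rho^\veps\uu{u}^\veps\phi^\veps)$. The only difference is one of detail: you spell out the classical kinetic balance and the final recombination, both of which the paper leaves as routine.
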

\begin{remark}
    The total energy identity \eqref{eq:eng_id} is proven for classical solutions of the system \eqref{eq:epb} and the same remains as an inequality for weak solutions. Consequently, throughout this paper, we assume the existence of an energy stable weak solution, i.e.\ a weak solution satisfying the estimate
    \begin{equation}
    \label{eq:glob_eng}
    \begin{aligned}
    & \int_{\Omega}\Big[\frac{\veps^2}{2} \abs{\bgrd\phi^\veps}^{2}+(\phi^\veps-1)e^{\phi^\veps} +\frac{1}{2}\rho^\veps |\uu{u}^\veps|^2 \Big](t, \cdot) \dd \uu{x}  \\
    & \leqslant \int_{\Omega}\Big[\frac{\veps^2}{2} \abs{\bgrd\phi^\veps_0}^{2} + (\phi^\veps_0-1)e^{\phi^\veps_0} +\frac{1}{2} \rho^\veps_0 |\uu{u}^\veps_0|^2\Big] \dd \uu{x}, \quad t \in (0, T) \text{ a.e.},
    \end{aligned}
    \end{equation}    
    where $\phi_0^\veps$ is the solution of \eqref{eq:poisson-boltz} with $\rho_0^\veps$ on the right hand side. 
\end{remark}

\subsection{Quasineutral limit}
\label{sec:QN_lt}

The quasineutral limit can be performed by letting $\veps\rightarrow 0$ in the system \eqref{eq:epb}. In the following, we assume that $\rho^\veps\to\tilde{\rho}$ and $\uu{u}^\veps\to\uu{U}$ as $\veps\to0$. Formally letting $\veps\to 0$ in \eqref{eq:epb}, the following ICE equations are obtained:
\begin{subequations}
    \label{eq:iso_euler}
    \begin{align}
    \D_t\tilde{\rho}+\dive (\tilde{\rho}\uu{U})&=0,\\
    \D_t(\rho\uu{U})+\bdive (\tilde{\rho}\uu{U}\otimes\uu{U})+\bgrd\tilde{\rho} &=0.
\end{align}
\end{subequations}

The literature on the quasineutral limit of the pressuresless EPB system \eqref{eq:epb} is very sparse; see, e.g.\ \cite{Xue14} for a treatment of the quasineutral limit of the pressureless EPB system in the case of strong solutions. For related accounts on the quasineutral limit, we refer the reader to \cite{CDM+95,CG00,DM08,FZ10,GS03,Wan04}.

\subsection{Stabilisation}
\label{sec:stab}

The goal of the present work is to propose and analyse a semi-implicit finite volume scheme for the EPB system \eqref{eq:epb} which is energy stable and AP in the quasineutral limit. In other words, our objective is to establish a discrete equivalent of the energy identity \eqref{eq:eng_id} for the numerical scheme. In order to achieve the energy stability of the numerical solutions, we employ the formalism introduced \cite{DVB17, DVB20, PV16} and pursued in \cite{AGK24,AGK23,AK24b,AK24a}. This approach involves the incorporation of stabilisation terms in the mass and momentum fluxes in \eqref{eq:cons_mas}-\eqref{eq:cons_mom} and also in the electrostatic forcing term on the right-hand side of the momentum equation \eqref{eq:cons_mom}, thus yielding the modified system
\begin{subequations}
\label{eq:stab_epb}
\begin{align}
  \D_t\rho^\veps+\dive (\rho^\veps\uu{u}^\veps-\uu{q}^\veps)&=0, \label{eq:r_cons_mas}\\ 
  \D_t(\rho^\veps\uu{u}^\veps)+\bdive
  (\uu{u}^\veps\otimes(\rho^\veps\uu{u}^\veps-\uu{q}^\veps))&=-\rho^\veps\bgrd\phi^\veps+\bgrd\Lambda^\veps, \label{eq:r_cons_mom}\\
  -\veps^{2}\Delta\phi^\veps&=\rho^\veps-e^{\phi^\veps}.\label{eq:r_poisson-boltz}
\end{align}
\end{subequations}
Similar to Proposition \ref{prop:engy_balance}, one can establish the apriori estimates satisfied by classical solutions of the modified system \eqref{eq:r_cons_mas}-\eqref{eq:r_poisson-boltz}. Expressions for the stabilisation terms is determined accordingly from the total energy estimate so as to guarantee the stability of solutions.
\begin{proposition}
  \label{prop:r_engy_balance}
  The classical solutions of \eqref{eq:r_cons_mas}-\eqref{eq:r_poisson-boltz} satisfy the following identities. 
  \begin{itemize}
  \item A potential energy identity
    \begin{equation}
      \label{eq:r_potbal}
     \Dt\left(\frac{\veps^2}{2}\abs{\bgrd\phi^\veps}^{2}\right)+\D_t((\phi^\veps-1)e^{\phi^\veps})
      -\veps^2\dive(\phi^\veps\Dt(\bgrd\phi^\veps)=-\dive(\rho^\veps\uu{u}^\veps-\uu{q}^\veps)\phi^\veps.
    \end{equation}
  \item A kinetic energy identity
    \begin{align}
      \label{eq:r_kinbal}
      \Dt\left(\frac{1}{2}\rho^\veps{\abs{\uu{u}^\veps}}^2\right)+\dive\left(\frac{1}{2}{\abs{\uu{u}^\veps}}^2(\rho^\veps\uu{u}^\veps-\uu{q}^\veps)\right)=-\bgrd \phi^\veps\cdot(\rho^\veps\uu{u}^\veps-\uu{q}^\veps)
      -\uu{q}^\veps\cdot\bgrd\phi^\veps+\uu{u}^\veps\cdot\bgrd\Lambda^\veps.  
      \end{align}
  \item Adding \eqref{eq:r_potbal} and \eqref{eq:r_kinbal} yields total energy identity
  \begin{align}
  \label{eq:r_eng_id}
  &\Dt\left(\frac{\veps^2}{2}\abs{\bgrd\phi^\veps}^{2}+(\phi^\veps-1)e^{\phi^\veps}+\frac{1}{2}\rho^\veps{\abs{\uu{u}^\veps}}^2\right)\\ \nonumber
  &+\dive\Big((\phi^\veps+\frac{1}{2}{\abs{\uu{u}^\veps}}^2)(\rho^\veps\uu{u}^\veps-\uu{q}^\veps)-\veps^2\phi^\veps\Dt(\bgrd\phi^\veps)-\uu{u}^\veps\Lambda^\veps\Big)
  =-\uu{q}^\veps\cdot\bgrd\phi^\veps-\dive(\uu{u}^\veps)\Lambda^\veps  
  \end{align}
  \end{itemize}
  \begin{proof}
  The proof follows similar lines as in Proposition~\ref{prop:engy_balance} and hence omitted. 
  \end{proof}
  \end{proposition}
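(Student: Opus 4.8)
The plan is to mirror the proof of Proposition~\ref{prop:engy_balance} step by step, tracking how the two stabilisation contributions---the flux shift $\uu{q}^\veps$ and the source shift $\bgrd\Lambda^\veps$---thread through the manipulations. For the potential energy identity \eqref{eq:r_potbal}, I would multiply the modified mass balance \eqref{eq:r_cons_mas} by $\phi^\veps$ and proceed exactly as before: use the Poisson--Boltzmann relation \eqref{eq:r_poisson-boltz} to rewrite $\D_t\rho^\veps=-\veps^2\D_t(\Delta\phi^\veps)+\D_t(e^{\phi^\veps})$, invoke the identities $(\D_t(\Delta\phi^\veps))\phi^\veps=\dive(\phi^\veps\D_t(\bgrd\phi^\veps))-\Dt(\half\abs{\bgrd\phi^\veps}^2)$ and $(\D_t e^{\phi^\veps})\phi^\veps=\Dt((\phi^\veps-1)e^{\phi^\veps})$, and rearrange. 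The flux shift enters only through the convective term, which is now $\dive(\rho^\veps\uu{u}^\veps-\uu{q}^\veps)\phi^\veps$; it passes through linearly and produces the modified right-hand side $-\dive(\rho^\veps\uu{u}^\veps-\uu{q}^\veps)\phi^\veps$, so this step is essentially identical to the unshifted case.

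For the kinetic energy identity \eqref{eq:r_kinbal}, I would take the scalar product of the momentum balance \eqref{eq:r_cons_mom} with $\uu{u}^\veps$. Writing $\uu{w}^\veps:=\rho^\veps\uu{u}^\veps-\uu{q}^\veps$ for brevity, the mass equation reads $\Dt\rho^\veps=-\dive\uu{w}^\veps$, and the algebraic identity $\uu{u}^\veps\cdot\Dt(\rho^\veps\uu{u}^\veps)=\Dt(\half\rho^\veps\abs{\uu{u}^\veps}^2)-\half\abs{\uu{u}^\veps}^2\dive\uu{w}^\veps$ lets me isolate the time derivative of the kinetic energy. The convective term is handled by the vector-calculus identity $\uu{u}^\veps\cdot\bdive(\uu{u}^\veps\otimes\uu{w}^\veps)=\abs{\uu{u}^\veps}^2\dive\uu{w}^\veps+\uu{w}^\veps\cdot\bgrd(\half\abs{\uu{u}^\veps}^2)$, which combines with the $\half\abs{\uu{u}^\veps}^2\dive\uu{w}^\veps$ term to collapse into the conservative flux $\dive(\half\abs{\uu{u}^\veps}^2\uu{w}^\veps)$. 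The forcing contributes $-\rho^\veps\uu{u}^\veps\cdot\bgrd\phi^\veps+\uu{u}^\veps\cdot\bgrd\Lambda^\veps$; I would then split $-\rho^\veps\uu{u}^\veps\cdot\bgrd\phi^\veps=-\bgrd\phi^\veps\cdot\uu{w}^\veps-\uu{q}^\veps\cdot\bgrd\phi^\veps$ precisely so as to match the form stated in \eqref{eq:r_kinbal}.

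For the total energy identity \eqref{eq:r_eng_id}, I would add \eqref{eq:r_potbal} and \eqref{eq:r_kinbal}. The three densities $\frac{\veps^2}{2}\abs{\bgrd\phi^\veps}^2$, $(\phi^\veps-1)e^{\phi^\veps}$ and $\half\rho^\veps\abs{\uu{u}^\veps}^2$ collect under a single time derivative. The key cancellation is that the potential-energy right-hand side $-\dive(\uu{w}^\veps)\phi^\veps$ combines with the piece $-\bgrd\phi^\veps\cdot\uu{w}^\veps$ from the kinetic identity into the single conservative divergence $-\dive(\phi^\veps\uu{w}^\veps)$, which merges with $\dive(\half\abs{\uu{u}^\veps}^2\uu{w}^\veps)$ to give the $(\phi^\veps+\half\abs{\uu{u}^\veps}^2)\uu{w}^\veps$ flux. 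Finally I would rewrite the source contribution using $\uu{u}^\veps\cdot\bgrd\Lambda^\veps=\dive(\uu{u}^\veps\Lambda^\veps)-\dive(\uu{u}^\veps)\Lambda^\veps$, moving $\dive(\uu{u}^\veps\Lambda^\veps)$ into the total-energy flux and leaving $-\dive(\uu{u}^\veps)\Lambda^\veps$ on the right.

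The calculation is routine, so the main obstacle is bookkeeping rather than conceptual: one must carry the non-conservative remainders $-\uu{q}^\veps\cdot\bgrd\phi^\veps$ and $-\dive(\uu{u}^\veps)\Lambda^\veps$ cleanly through the additions and resist prematurely reabsorbing them into divergence form. These two terms are exactly the quantities that survive on the right-hand side of \eqref{eq:r_eng_id}, and their sign is what the subsequent choice of $\uu{q}^\veps$ and $\Lambda^\veps$ must exploit in order to dissipate mechanical energy; obtaining them with the correct sign and in their genuinely non-conservative form is the real content of the statement, even though each individual step reduces to the product rule and the Poisson--Boltzmann relation.
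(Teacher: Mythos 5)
Your proposal is correct and is exactly the argument the paper intends: the paper omits the proof, stating only that it ``follows similar lines as in Proposition~\ref{prop:engy_balance}'', and your write-up carries out precisely that mirroring (multiplying the modified mass balance by $\phi^\veps$ and using \eqref{eq:r_poisson-boltz} for the potential identity, dotting the momentum balance with $\uu{u}^\veps$ for the kinetic identity, then adding and extracting $-\uu{q}^\veps\cdot\bgrd\phi^\veps-\dive(\uu{u}^\veps)\Lambda^\veps$ as the non-conservative remainders). All the intermediate vector-calculus identities and the bookkeeping of the shift terms check out.
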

  \begin{remark}
  The total energy identity \eqref{prop:engy_balance} motivates us to choose $\uu{q}^\veps$ and $\Lambda^\veps$, proportional to the terms $\bgrd\phi^\veps$ and $\dive(\uu{u}^\veps)$, respectively. Thus, at the continuous level, we immediately see that formally choosing
  \begin{align}
      \uu{q}^\veps&=\eta\bgrd\phi^\veps, \\
      \Lambda^\veps&=\alpha\dive(\uu{u}^\veps)
  \end{align}
  with $\eta > 0$ and  $\alpha > 0$, leads to the energy stability inequality:
  \begin{align}
    \label{eq:r_eng_id_ineq}
  &\Dt\Big(\frac{\veps^2}{2}\abs{\bgrd\phi^\veps}^{2}+(\phi^\veps-1)e^{\phi^\veps}
  +\frac{1}{2}\rho^\veps{\abs{\uu{u}^\veps}}^2\Big)\nonumber\\
  &+\dive\Big((\phi^\veps+\frac{1}{2}{\abs{\uu{u}^\veps}}^2)(\rho^\veps\uu{u}^\veps-\uu{q}^\veps)-\veps^2\phi^\veps\Dt(\bgrd\phi^\veps)-\uu{u}^\veps\Lambda^\veps\Big)
  \leqslant 0 
  \end{align}
\end{remark}

Based on the above considerations, we design a semi-implicit scheme in which the stabilisation of the numerical momentum flux functions and the source term are pivotal for achieving nonlinear energy stability; see also \cite{ADD+21,DVB17,DVB20,GVV13,PV16} for related discussions. 

\section{Space Discretisation and Discrete Differential Operators}
\label{sec:MAC_disc_diff}

This section is devoted to discretising the computational space-domain $\Omega\subseteq\mathbb{R}$ using a MAC grid and defining the corresponding function spaces to approximate the stabilised EPB system \eqref{eq:r_cons_mas}-\eqref{eq:r_poisson-boltz} within a finite volume framework. We begin by assuming that the closure of the domain $\Omega$ is a union of closed rectangles $(d=2)$ or closed orthogonal parallelepipeds $(d=3)$, with mutually disjoint interiors. Without loss of generality, we further assume that the edges (or faces) of these rectangles or parallelepipeds are orthogonal to the canonical basis vectors, denoted by $(\uu{e}^{(1)},\dots,\uu{e}^{(d)})$. We refer the interested reader to, e.g.\ \cite{GHL+18, GHM+16} for a detailed account on numerical schemes employing MAC grids and their analysis.

\subsection{Mesh and Unknowns}
\label{subsec:mesh_unkn}
A MAC grid consists of a pair $\mcal{T}=(\mcal{M},\mcal{E})$, where $\mcal{M}$ represents the primal mesh which is a partition of $\bar{\Omega}$ consisting of possibly non-uniform closed rectangles ($d=2$) or parallelepipeds ($d=3$) and $\mcal{E}$ denotes the set of all edges of the primal cells. We express $\E$ as $\mcal{E}=\mcal{E}_\intr\cup\mcal{E}_\extr$, where $\mcal{E}_\intr$ and $\mcal{E}_\extr$ are, respectively, the collection of internal and external edges of $\E$. We denote by $\E^{(i)}$ the set of edges that are orthogonal to $\uu{e}^{(i)}$ and decompose $\E^{(i)}$ as $\mcal{E}^{(i)}=\mcal{E}_\intr^{(i)}\cup\mcal{E}_\extr^{(i)}$, where $\E_\intr^{(i)}$ (resp. $\E_\intr^{(i)}$) are, respectively, the internal and external edges of $\mcal{E}^{(i)}$. We denote $\s=K|L$, where the edge $\s\in\E_\intr$ is such that $\s=\bar{K}\cap\bar{L}$ with $K,L\in\M$. For each $\sigma\in\mcal{E}$, we construct a dual cell $\Ds$ which is defined as follows:
\begin{equation*}
\Ds = 
\begin{cases}
    D_{\s, K}\cup D_{\s, L} , & \text{if } \s=K|L\in\E_\intr, \\
    D_{\s, K}, & \text{if } \s\in\E_\extr \text{ and adjacent to } K\in\M,
\end{cases}
\end{equation*}
where $D_{\s, K}$ is the half portion of the primal cell $K$ adjacent to $\s$. Furthermore, we denote by $\mcal{E}(K)$, the set of all edges of $K\in\mcal{M}$ and by $\tilde{\mcal{E}}(D_\sigma)$, the set of all edges of the dual cell $D_\sigma$. The mesh size is defined by 
\begin{equation}
    \label{eq:mesh_size}
    h_\mcal{T}=\max_{K\in\M}\{h_K\},
\end{equation}
where $h_K$ denotes the diameter of $K$.

We denote by $L_{\mcal{M}}(\Omega)$, the space of scalar-valued functions which are piecewise constant on each primal cell $K\in\M$. The space $L_\M$ is used for approximating the density and the potential. Analogously, we define by $\uu{H}_{\mcal{E}}(\Omega)=\prod_{i=1}^{d} H^{(i)}_{\mcal{E}}(\Omega)$, the set of vector-valued (in $\mbb{R}^d$) functions which are constant on each dual cell $D_\sigma$ and for each $i=1,2,\dots,d$. The space of vector-valued functions vanishing on the external edges is denoted as  $\uu{H}_{\mcal{E},0}(\Omega)=\prod_{i=1}^d H^{(i)}_{\mcal{E},0}(\Omega)$, where  $H^{(i)}_{\mcal{E},0}(\Omega)$ contains those elements of $H^{(i)}_{\mcal{E}}(\Omega)$ which vanish on the external edges. For a primal grid function $q\in L_{\mcal{M}}(\Omega)$, represented as $q =\sum_{K\in\mcal{M}}q_K\chark$, and for each $\sigma = K|L \in\cup_{i=1}^d\mcal{E}^{(i)}_\intr$, the dual average $q_{D_\sigma}$ of $q$ over $D_\sigma$ is defined through the following relation:
  \begin{equation}
   \label{eq:mass_dual}
     \abs{D_\sigma}q_{D_\sigma}=\abs{D_{\sigma,K}}q_K+\abs{D_{\sigma,L}}q_L.
  \end{equation}

\subsection{Discrete Convection Fluxes and Differential Operators}
\label{sec:dic_convect}

We now introduce the discrete convection fluxes and the discrete differential operators on the functional spaces defined above, aimed at approximating the differential operators in the EPB system \eqref{eq:epb}. We initiate by constructing the mass fluxes, which relies on the following technical result presented below for convenience and proved in \cite[Lemma 2]{GH+21}.
\begin{lemma}
    \label{lem:rho_sig}
    Let $\psi$ be a strictly convex and continuously differentiable function over an open interval $I$ of $\mathbb{R}$. For each $\rho_K, \rho_L\in I$, there exists a unique $\rho_{KL}\in\llbracket \rho_K, \rho_L\rrbracket$ such that
    \begin{equation}
    \psi(\rho_K) + \psi^\prime(\rho_K)(\rho_{KL}-\rho_K) = \psi(\rho_L) + \psi^\prime(\rho_L)(\rho_{KL}-\rho_L),\; \mbox{if}\; \rho_K\neq\rho_L,
    \end{equation}
    and $\rho_{KL}=\rho_K=\rho_L$, otherwise. Specifically, with the function $\psi(\rho)=\rho\ln\rho$, $I=(0,\infty)$ and for each $K,L\in\M$, there exists a unique $\rho_{KL}\in\llbracket \rho_K, \rho_L\rrbracket$ such that 
    \begin{equation}
    \label{eq:rho_sig_choice}
    \begin{aligned}
    \rho_K - \rho_L &= \rho_{KL}[\ln(\rho_K)-\ln(\rho_L)], \
    \mbox{if} \ \rho_K\neq\rho_L,\\
    \rho_{KL}=\rho_K &=\rho_L, \ \mbox{otherwise}.
    \end{aligned}
    \end{equation}
    \end{lemma}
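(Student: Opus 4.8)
The plan is to interpret $\rho_{KL}$ geometrically as the abscissa of the intersection point of the two tangent lines to the graph of $\psi$ at $\rho_K$ and $\rho_L$. To this end I would introduce the affine function
\[
g(x) = \big[\psi(\rho_K) + \psi'(\rho_K)(x - \rho_K)\big] - \big[\psi(\rho_L) + \psi'(\rho_L)(x - \rho_L)\big],
\]
so that the defining relation for $\rho_{KL}$ is precisely $g(\rho_{KL}) = 0$. Since $g$ is affine in $x$ with slope $\psi'(\rho_K) - \psi'(\rho_L)$, and since strict convexity of $\psi$ forces $\psi'$ to be strictly increasing, this slope is nonzero whenever $\rho_K \neq \rho_L$. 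A nonconstant affine map has exactly one root, which immediately yields both the existence and the uniqueness of $\rho_{KL}$.

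To confirm that this unique root lies in the closed interval $\llbracket \rho_K, \rho_L \rrbracket$, I would evaluate $g$ at the two endpoints. Assuming $\rho_K \neq \rho_L$, the strict supporting-line property of a strictly convex function gives $\psi(\rho_K) > \psi(\rho_L) + \psi'(\rho_L)(\rho_K - \rho_L)$ and, symmetrically, $\psi(\rho_L) > \psi(\rho_K) + \psi'(\rho_K)(\rho_L - \rho_K)$. The first inequality shows $g(\rho_K) > 0$ and the second shows $g(\rho_L) < 0$. By continuity of the affine map $g$ together with the intermediate value theorem, its root lies strictly between $\rho_K$ and $\rho_L$, hence in $\llbracket \rho_K, \rho_L \rrbracket$; the degenerate case $\rho_K = \rho_L$ is immediate.

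For the specialisation $\psi(\rho) = \rho \ln \rho$ on $I = (0,\infty)$, which is strictly convex with $\psi'(\rho) = \ln \rho + 1$, I would substitute into the defining equation and simplify. Expanding both tangent expressions, the terms $\rho_K \ln\rho_K$ and $\rho_L \ln\rho_L$ cancel against the corresponding contributions from $\psi'(\rho_K)\rho_K$ and $\psi'(\rho_L)\rho_L$, and the additive $\rho_{KL}$ coming from the constant $1$ in $\psi'$ cancels on both sides; what remains reduces to $\rho_{KL}(\ln \rho_K - \ln \rho_L) = \rho_K - \rho_L$, which is exactly \eqref{eq:rho_sig_choice}.

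The argument is essentially routine once the tangent-line picture is in place; the only step demanding genuine care is the location argument, where one must invoke the \emph{strict} supporting-line inequalities rather than their non-strict counterparts, so that the sign changes at the endpoints are strict and the root is guaranteed to be interior (and hence to satisfy $\rho_{KL} \in \llbracket \rho_K, \rho_L \rrbracket$). This strictness is precisely what the hypothesis of strict convexity supplies, and it is also what ultimately underlies the discrete energy dissipation this choice of $\rho_{KL}$ is designed to produce.
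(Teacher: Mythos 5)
Your proof is correct and complete. Note that the paper itself does not prove this lemma at all: it imports the statement verbatim and defers the proof to \cite[Lemma 2]{GH+21}, so there is no in-paper argument to compare against. Your tangent-line intersection argument is the standard one for this kind of statement (and is presumably the content of the cited reference): the affine function $g$ has nonzero slope $\psi'(\rho_K)-\psi'(\rho_L)$ by strict monotonicity of $\psi'$, giving existence and uniqueness of the root on all of $\mathbb{R}$, and the strict supporting-line inequalities give $g(\rho_K)>0$, $g(\rho_L)<0$, locating the root strictly between $\rho_K$ and $\rho_L$. Your algebraic specialisation to $\psi(\rho)=\rho\ln\rho$ is also verified correctly: the defining equation collapses to $\rho_{KL}(\ln\rho_K-\ln\rho_L)=\rho_K-\rho_L$, i.e.\ $\rho_{KL}$ is the logarithmic mean of $\rho_K$ and $\rho_L$. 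A small remark: for this particular $\psi$ one could alternatively obtain the location property in one line by applying the mean value theorem to $\ln$ on $\llbracket \rho_K,\rho_L\rrbracket$, but your general argument has the advantage of proving the lemma for arbitrary strictly convex, continuously differentiable $\psi$, which is how the statement is phrased.
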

\begin{remark}
    Throughout this paper, the symbol $\llbracket a, b\rrbracket$ denotes the interval $[\min(a, b), \max(a, b)]$, for any two real numbers $a$ and $b$.
\end{remark}

\begin{definition}
\label{def:disc_conv_flux}
Suppose we discretise $\Omega$ with a MAC grid and introduce the discrete function spaces as outlined above. The definitions of the mass and momentum fluxes are then given as follows.
\begin{itemize}
  \item For each $K \in \mcal{M}$ and $\sigma\in \mcal{E}(K)$, the mass flux $F_{\sigma,K} \colon L_{\mcal{M}}(\Omega) \times \uu{H}_{\mcal{E},0}(\Omega) \to \mbb{R}$ is defined by
    \begin{equation}
      \label{eq:mass_flux}
      F_{\sigma,K}(\rho,\uu{u}):=\abs{\sigma}(\rho_{\s}u_{\sigma, K}-\Q_{\s, K}), \ (\rho,\uu{u})\in
      L_{\mcal{M}}(\Omega) \times \uu{H}_{\mcal{E},0}(\Omega).
    \end{equation}
Here, $u_{\sigma , K}=u_{\sigma} \uu{e}^{(i)}\cdot\uu{\nu}_{\sigma, K}$ is an approximation of the normal velocity on the edge $\s$ and $\uu{\nu}_{\sigma, K}$ is the unit vector normal to the edge     $\sigma\in\mcal{E}^{(i)}_{\mathrm{int}}\cap\Ek$ in the direction outward to the cell $K$. The density on each interface $\sigma=K|L\in\mcal{E}$ is approximated using Lemma~\ref{lem:rho_sig} as $\rho_\s=\rho_{KL}$, where $\rho_{KL}$ is obtained by \eqref{eq:rho_sig_choice}. The choice for the stabilisation term $\Q\in\uu{H}_{\mcal{E},0}(\Omega)$ in \eqref{eq:mass_flux} will be made aposteriori after an energy stability analysis of the entire finite volume scheme is carried out.
  \item For a fixed $i=1,2,\dots,d$, for each $\sigma\in\mcal{E}^{(i)}, \epsilon\in\tilde{\E}(D_\sigma)$ and $(\rho,\uu{u},v)\in \Lm\times \uu{H}_{\mcal{E},0}\times H^{(i)}_{\mcal{E},0}$, the upwind momentum convection flux is expressed as
    \begin{align} 
    \label{mom_flux_up} 
      \sum_{\epsilon\in\tilde{\E}(\Ds)}\fesig(\rho,\uu{u})v_{\epsilon, \mathrm{up}},
    \end{align}
    where $\fesig(\rho,\uu{u})$ is the mass flux across the edge $\epsilon$ of the dual cell $\Ds$ which is a suitable linear combination of the primal mass convection fluxes at the neighbouring edges with constant coefficients if $\epsilon\in\mcal{E}_\intr$, otherwise it is $0$.
\item In the expression \eqref{mom_flux_up}, the velocity  $v_{\epsilon,\mathrm{up}}$ at the internal dual edge is determined by the following upwind choice:
\label{eq:mom_up}
\begin{equation}
v_{\epsilon,\mathrm{up}}=\begin{cases}
v_{\sigma}, &\fesig(\rho,\uu{u})\geqslant 0,\\
v_{\sigma^{\prime}}, &\mathrm{otherwise},
\end{cases}
\end{equation}
where $\epsilon\in\tilde{\E}(\Ds)$, $\epsilon=\Ds|D_{\sigma^{\prime}}$.
\end{itemize}
\end{definition}

\begin{definition}[Discrete gradient, discrete divergence and discrete Laplacian]
The discrete gradient operator  $\grd_{\mcal{E}}:L_{\mcal{M}}(\Omega)\rightarrow\uu{H}_{\mcal{E}}(\Omega)$ is defined by the map $q \mapsto \grd_{\mcal{E}}q=\Big(\D^{(1)}_{\mcal{E}}q,\D^{(2)}_{\mcal{E}}q,\dots,\D^{(d)}_{\mcal{E}}q\Big)$, where for each $i=1,2,\dots,d$, $\partial^{(i)}_{\mcal {E}}q$ denotes
\begin{equation}
\label{eq:dis_grad}
\partial^{(i)}_{\mcal {E}}q=\sum_{\sigma\in \mcal{E}^{(i)}_\intr}(\partial^{(i)}_{\mcal{E}}q)_{\sigma}\mcal{X}_{D_{\sigma}}, \ \mbox{with} \  (\partial^{(i)}_{\mcal{E}}q)_{\sigma}= \frac{\abs{\sigma}}{\abs{D_\sigma}}(q_{L}-q_K)\uu{e}^{(i)}\cdot \uu{\nu}_{\sigma,K}, \; \s=K|L\in\E_\intr^{(i)}.
\end{equation}
The discrete divergence operator $\dive_\M:\uu{H}_{\E}(\Omega)\rightarrow L_{\mcal{M}}(\Omega)$ is defined as $\uu{v} \mapsto \dive_\M \uu{v}=\sum_{K\in\M}(\dive_{\mcal{M}} \uu{v})_K \mcal{X}_{K}$, where for each $K\in\M$, $(\dive_{\mcal{M}} \uu{v})_K $ denotes
\begin{equation}
\label{eq:dis_div}
(\dive_\M \uu{v})_K =\frac{1}{\abs{K}}\sum_{\sigma\in\mcal{E}(K)}\abs{\sigma} v_{\sigma,K},
\end{equation}
where $v_{\sigma,K}$ is as in the mass flux. Finally, we define the discrete Laplacian  $\Delta_{\M}:L_{\mcal{M}}(\Omega)\rightarrow L_{\mcal{\M}}(\Omega)$ via the map $q \mapsto \Delta_{\M}q =\sum_{K\in\M}(\Delta_{\M} q)_K \mcal{X}_{K}$, where for each $K\in\M$, $(\Delta_{\M} q)_K $ denotes
\begin{equation}
\label{eq:disc_lap}
(\Delta_{\M} q)_K  = (\dive_{\M}(\grd_{\E}{q}))_{K}.
\end{equation}
\end{definition}
We conclude this section with the following `gradient-divergence duality'; see, e.g.\ \cite{EG+10,GHL+18} for further details. 
\begin{proposition}
  For any $(q,\uu{v})\in\Lm\times\Hez$, the discrete gradient and divergence operators satisfy the duality relation
  \begin{equation}
    \label{eq:dis_dual}
    \int_{\Omega}q(\dive_\M \uu{v})\dd\uu{x}+\int_{\Omega}\grd_{\mcal{E}}q\cdot\uu{v}\dd\uu{x}=0.
  \end{equation}
\end{proposition}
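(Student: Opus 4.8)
The plan is to prove \eqref{eq:dis_dual} by a direct discrete integration by parts: I expand both integrals into sums over the mesh and match them edge by edge, the boundary contributions disappearing thanks to the hypothesis $\uu{v}\in\Hez$. Since $q$ is piecewise constant on the primal cells and $(\dive_\M\uu{v})_K$ is constant on each $K$, the definition \eqref{eq:dis_div} gives
\[
\int_\Omega q\,(\dive_\M\uu{v})\,\dd\uu{x}=\sum_{K\in\M}q_K\sum_{\sigma\in\Ek}\abs{\sigma}\,v_{\sigma,K}.
\]
Because $\uu{v}$ vanishes on the external edges, the terms with $\sigma\in\E_\extr$ drop and the double sum reduces to a sum over interior edges. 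Reorganising so that each interior edge $\sigma=K|L$ is counted once, collecting the contributions of its two adjacent cells, and using the antisymmetry $v_{\sigma,L}=-v_{\sigma,K}$ (which follows from $v_{\sigma,K}=v_\sigma\,\uu{e}^{(i)}\cdot\uu{\nu}_{\sigma,K}$ together with $\uu{\nu}_{\sigma,L}=-\uu{\nu}_{\sigma,K}$), I would rewrite this as
\[
\int_\Omega q\,(\dive_\M\uu{v})\,\dd\uu{x}=\sum_{\sigma=K|L\in\E_\intr}\abs{\sigma}\,(q_K-q_L)\,v_{\sigma,K}.
\]

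Next I would expand the gradient term. The components of $\grd_\E q$ live on the dual cells $D_\sigma$, on each of which both $\grd_\E q$ and $\uu{v}$ are constant, so summing over the directions $i$ and using \eqref{eq:dis_grad},
\[
\int_\Omega \grd_\E q\cdot\uu{v}\,\dd\uu{x}=\sum_{i=1}^d\sum_{\sigma\in\E^{(i)}_\intr}\abs{\Ds}\,(\partial^{(i)}_\E q)_\sigma\,v_\sigma.
\]
Substituting the definition of $(\partial^{(i)}_\E q)_\sigma$, the factor $\abs{\Ds}$ cancels against the $1/\abs{\Ds}$ carried by the discrete gradient, and using once more $v_\sigma\,\uu{e}^{(i)}\cdot\uu{\nu}_{\sigma,K}=v_{\sigma,K}$ the direction index becomes immaterial, so the whole expression collapses to the single sum $\sum_{\sigma=K|L\in\E_\intr}\abs{\sigma}\,(q_L-q_K)\,v_{\sigma,K}$. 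Comparing this with the divergence expression obtained above, the two are exactly negatives of one another, which is precisely \eqref{eq:dis_dual}.

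I do not expect a genuine obstacle here, since the statement is a discrete summation-by-parts identity whose entire content is careful bookkeeping. The only points that demand attention are, first, verifying that the boundary terms vanish, which is guaranteed exactly by $\uu{v}\in\Hez$; and second, handling the orientation conventions consistently, in particular the sign flip $v_{\sigma,L}=-v_{\sigma,K}$ arising from the opposite outward normals of the two cells sharing an interior edge, and the cancellation of the dual volume $\abs{\Ds}$. Keeping the normal components $\uu{e}^{(i)}\cdot\uu{\nu}_{\sigma,K}$ explicit throughout the computation is the cleanest way to avoid sign errors.
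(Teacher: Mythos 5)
Your proof is correct: the expansion of both integrals, the reduction to interior edges using $\uu{v}\in\Hez$, the sign flip $v_{\sigma,L}=-v_{\sigma,K}$ from the opposite outward normals, and the cancellation of $\abs{\Ds}$ against the $1/\abs{\Ds}$ in \eqref{eq:dis_grad} are all handled properly, and the two sums are indeed exact negatives of each other. The paper itself does not prove this proposition at all --- it simply cites the references \cite{EG+10,GHL+18} --- so your direct summation-by-parts computation supplies precisely the standard argument those references contain, and there is no gap.
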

\begin{remark}
    In this section and throughout the rest of this paper, we have assumed that the dual variables and the gradient vanish on the boundaries for the ease of analysis. Nevertheless, in Section~\ref{sec:num_res}, we also explore several other relevant boundary conditions in our numerical case studies.
\end{remark}

\section{Energy Stable Semi-implicit Scheme}
\label{sec:stable_scheme}
In the following, we introduce our semi-implicit in time, fully-discrete scheme for the EPB system \eqref{eq:r_cons_mas}-\eqref{eq:r_poisson-boltz}.

Let us consider a discretisation $0=t_0<t_1<\cdots<t_N=T$ of the time interval $(0,T)$ and let $\dt=t_{n+1}-t_n$ for $n=0,1,\dots,N-1$ be the constant time-step. We introduce the following fully-discrete scheme for $0\leqslant n\leqslant{N-1}$:
\begin{subequations}
\label{epb_dis}
\begin{equation}
\label{eq:dis_cons_mas}
    \frac{1}{\dt}(\rho_{K}^{n+1}-\rho_{K}^{n})+\frac{1}{\left|K\right|}\sum_{\s\in\E(K)}F_{\sigma,K}^{n}=0 ,\text{$\forall K\in \M, $}
\end{equation}
\begin{align}
    \label{eq:dis_cons_mom}
    \frac{1}{\dt}(\rho_{\Ds}^{n+1}u_{\sigma}^{n+1}-\rho_{\Ds}^{n}u_{\sigma}^{n})+\frac{1}{\left|\Ds\right|}\sum_{\epsilon\in\bar\E(\Ds)}F_{\epsilon,\sigma}^{n}u_{\epsilon, \mathrm{up}}^{n}=-\rho_{\s}^{n}(\partial^{(i)}_{\E}\phi^{n+1})_{\s}+(\partial^{(i)}_{\E}\Lambda^{n})_{\s},\, \text{for} \, 1\leqslant i\leqslant d,\forall\s\in\E_{int}^{(i)},
\end{align}
\begin{equation}
    \label{eq:dis_poisson}
    -\veps^2(\Delta_{\M}\phi^{n+1})_{K}=\rho_{K}^{n+1}-e^{\phi^{n+1}_{K}}.
\end{equation}
\end{subequations}
The mass flux in \eqref{eq:dis_cons_mas} is defined by 
\begin{align}
\label{eq:stabterms}
&F_{\s,K}^n=\abs{\s}(\rho_{\s}^n\uu{u}_{\s,K}^n-\mcal{Q}_{\s,K}^{n+1}),
\end{align}
with the stabilisation term $\mcal{Q}_{\s,K}^{n+1}$ to be chosen after a stability analysis of the scheme. Averaging the mass balance \eqref{eq:dis_cons_mas} over a dual cell yields the dual mass balance 
\begin{equation}
    \label{eq:dis_cons_mass_dual}
    \frac{1}{\dt}(\rho_{\Ds}^{n+1}-\rho_{\Ds}^{n})+\frac{1}{\left|\Ds\right|}\sum_{\epsilon\in\tilde{\E}(\Ds)}F_{\epsilon,\sigma}^{n}=0.
\end{equation}
Using the dual mass balance \eqref{eq:dis_cons_mass_dual} in the momentum update \eqref{eq:dis_cons_mom} gives the following update for the velocity:
\begin{equation}
    \label{eq:dis_vel_dual}
    \frac{u_{\s}^{n+1}-u_{\s}^n}{\dt}+\frac{1}{|\Ds|}\sum_{\epsilon\in\tilde{\E}(\Ds)}(F_{\epsilon,\s}^n)^{-}\frac{u_{\s^{\prime}}^{n}-u_{\s}^n}{\rho_{\Ds}^{n+1}}=-\frac{\rho_{\s}^{n}}{\rho_{\Ds}^{n+1}}(\partial_{\E}^{(i)}\phi^{n+1})_{\s}+\frac{(\partial^{(i)}_{\E}\Lambda^{n})_{\s}}{\rho_{\Ds}^{n+1}}.
\end{equation}
Finally, we take initial approximation for $\rho$ and $\phi$ as the average of the initial data $\rho_{0}$ and $\phi_{0}$ respectively on the primal cells. Analogously, we take initial approximation of $\uu{u}$ as the average of the initial data $\uu{u}_{0}$ on the dual cells, i.e.\
\begin{equation}
\label{eq:dis_ic}
\begin{aligned}
    \rho_{K}^{0}&=\frac{1}{|K|}\int_{K}\rho_{0}(\uu{x})\dd\uu{x}, \ ; \forall K\in\M \\
    \phi_{K}^{0}&=\frac{1}{|K|}\int_{K}\phi_{0}(\uu{x})\dd\uu{x}, \; \forall K\in\M \\
    u_{\s}^{0}&=\frac{1}{|\Ds|}\int_{\Ds}(\uu{u}_{0}(\uu{x}))_{i}\dd\uu{x}, \; \forall \s\in\E_{\intr}^{(i)}, \, 1\leqslant i\leqslant d.
\end{aligned}
\end{equation}

\begin{remark}
Throughout the paper, we denote by $a^\pm=\half(a \pm \abs{a})$, the positive and negative parts of a real number $a$.
\end{remark}
\subsection{Discrete Identities}
\label{sec:id}
The aim of this section is to prove the apriori energy estimates satisfied by the scheme \eqref{eq:dis_cons_mas}-\eqref{eq:dis_poisson} which are discrete counterparts of the stability estimates stated
in Proposition~\ref{prop:r_engy_balance}.
\begin{lemma}[Discrete potential energy identity]
A solution to the system \eqref{eq:dis_cons_mas}-\eqref{eq:dis_poisson} satisfies the following equality for $1\leqslant i\leqslant d,\s\in\E_{\intr}^{(i)}$ and $0\leqslant n\leqslant{N-1}\colon$
\begin{equation}
\label{eq:dis_potbal}
\begin{aligned}    
    &\frac{\veps^2}{4\dt}\sum_{i=1}^{d}\sum_{\s\in\E^{(i)}(K)}\abs{\Ds}\left(\abs{(\partial^{(i)}_\E\phi^{n+1})_{\s}}^2-\abs{(\partial^{(i)}_\E\phi^{n})_{\s}}^2\right)+\frac{\abs{K}}{\dt}\left(e^{\phi_K^{n+1}}(\phi_K^{n+1}-1)-e^{\phi_K^{n}}(\phi_K^{n}-1)\right)+\mcal{R}_K\\
    &-\veps^2\sum_{i=1}^{d}\sum_{\substack{\s\in\E^{(i)}(K)\\\s=K|L}}\abs{\s}\frac{\phi_K^{n+1}+\phi_L^{n+1}}{2}(\partial^{(i)}_\E(\phi^{n+1}-\phi^n))_{\s,K}
    =-\phi_{K}^{n+1}\sum_{\s\in\E(K)}\abs{\s}(\rho_{\s}^{n}u_{\s,K}^{n}-Q_{\s,K}^{n+1}),
\end{aligned}
\end{equation}
with the remainder term 
    \begin{align}
     \label{eq:pot_id_const_b}
     &\mcal{R}_K =\frac{\abs{K}}{\dt}\left(e^{\phi_K^{n+1}}(\phi_K^{n+1}-\phi_K^{n}-1)+e^{\phi_K^{n}}\right)+\frac{\veps^2}{4\dt}\sum_{i=1}^d\sum_{\s\in\E^{(i)}(K)}\abs{\Ds}\abs{(\partial^{(i)}_\E\phi^{n+1})_{\s}-(\partial^{(i)}_\E\phi^{n})_{\s}}^2.
    \end{align}
\end{lemma}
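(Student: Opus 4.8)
The plan is to mimic, at the fully discrete level, the continuous computation behind the potential energy identity \eqref{eq:r_potbal}, where one multiplies the mass balance by the potential and eliminates the density through the Poisson--Boltzmann relation. First I would take the discrete mass balance \eqref{eq:dis_cons_mas}, multiply it by $\abs{K}\phi_K^{n+1}$ (the potential being implicit, it is evaluated at level $n+1$), and recall the definition of the mass flux \eqref{eq:stabterms}. This immediately reproduces the right-hand side $-\phi_K^{n+1}\sum_{\s\in\E(K)}\abs{\s}(\rho_\s^n u_{\s,K}^n-Q_{\s,K}^{n+1})$ of \eqref{eq:dis_potbal}, so the task reduces to rewriting the time-difference term $\tfrac{\abs{K}}{\dt}(\rho_K^{n+1}-\rho_K^n)\phi_K^{n+1}$ as the remaining left-hand side. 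To this end I would split $\rho=(\rho-e^{\phi})+e^{\phi}$ and invoke the discrete Poisson--Boltzmann equation \eqref{eq:dis_poisson}, which the scheme enforces at both levels $m\in\{n,n+1\}$, to substitute $\rho_K^{m}-e^{\phi_K^{m}}=-\veps^2(\Delta_{\M}\phi^{m})_K$. This decomposes the term into an exponential part $\tfrac{\abs{K}}{\dt}(e^{\phi_K^{n+1}}-e^{\phi_K^n})\phi_K^{n+1}$ and a Laplacian part $-\tfrac{\veps^2\abs{K}}{\dt}(\Delta_{\M}(\phi^{n+1}-\phi^n))_K\phi_K^{n+1}$, mirroring the two contributions $\Dt((\phi-1)e^{\phi})$ and $-\veps^2(\Dt\Delta\phi)\phi$ of the continuous proof.

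For the exponential part, the continuous identity $\phi\,\Dt(e^{\phi})=\Dt((\phi-1)e^{\phi})$ holds only up to a quadrature error once discretised. I would therefore write $(e^{\phi_K^{n+1}}-e^{\phi_K^n})\phi_K^{n+1}$ as the exact time difference $e^{\phi_K^{n+1}}(\phi_K^{n+1}-1)-e^{\phi_K^n}(\phi_K^n-1)$ plus a remainder; a short rearrangement identifies this remainder with the $e$-part of $\mcal{R}_K$, and the convexity of $x\mapsto e^x$ (equivalently $e^s\geqslant 1+s$) shows it is non-negative, which is precisely the dissipation needed for the later stability estimate.

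For the Laplacian part I would expand $\Delta_{\M}=\divm\grd_\E$ through \eqref{eq:disc_lap}, \eqref{eq:dis_div} and \eqref{eq:dis_grad}, rewriting the term as $-\tfrac{\veps^2}{\dt}\phi_K^{n+1}\sum_{\s\in\E(K)}\abs{\s}(\partial^{(i)}_\E(\phi^{n+1}-\phi^n))_{\s,K}$ (summed over $i$). The crucial step is a local summation by parts on $K$: on each interior edge $\s=K|L$ I would decompose $\phi_K^{n+1}=\tfrac{\phi_K^{n+1}+\phi_L^{n+1}}{2}-\tfrac{\phi_L^{n+1}-\phi_K^{n+1}}{2}$. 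The symmetric average part yields the divergence-like term of \eqref{eq:dis_potbal}, the discrete counterpart of $-\veps^2\dive(\phi^\veps\Dt(\bgrd\phi^\veps))$ in \eqref{eq:r_potbal}, which telescopes to the boundary and vanishes under the homogeneous conditions. To the antisymmetric jump part I would apply the elementary identity $x(x-y)=\tfrac12(x^2-y^2)+\tfrac12(x-y)^2$ with $x,y$ the edge jumps of $\phi^{n+1},\phi^n$; together with $(\partial^{(i)}_\E\phi^m)_\s=\tfrac{\abs{\s}}{\abs{\Ds}}(\phi_L^m-\phi_K^m)$ and the geometric relation $\abs{\s}\tfrac{\abs{\s}}{\abs{\Ds}}(\phi_L^m-\phi_K^m)^2=\abs{\Ds}(\partial^{(i)}_\E\phi^m)_\s^2$, the $\tfrac12(x^2-y^2)$ piece reproduces the gradient-energy difference with the factor $\tfrac14$, while the $\tfrac12(x-y)^2$ piece reproduces the (manifestly non-negative) gradient remainder in $\mcal{R}_K$.

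I expect the main obstacle to be this local summation by parts for the Laplacian term: one must carefully track the sign conventions in \eqref{eq:dis_grad}, the factors $\abs{\s}/\abs{\Ds}$, and the passage from per-cell edge jumps to the dual-cell quantities $\abs{\Ds}(\partial^{(i)}_\E\phi)_\s^2$, since it is exactly this bookkeeping that produces the (at first sight surprising) factor $\tfrac14$ and cleanly separates the exact time difference from the positive remainder. By contrast, the flux step and the exponential rearrangement are comparatively routine.
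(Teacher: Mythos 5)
Your proposal is correct and follows essentially the same route as the paper's (very terse) proof: multiply the discrete mass update by $\abs{K}\phi_K^{n+1}$, eliminate $\rho_K^{n+1}-\rho_K^{n}$ through the discrete Poisson--Boltzmann equation at both time levels, and treat the Laplacian contribution with the algebraic identity $a(a-b)=\tfrac12(a^2-b^2)+\tfrac12(a-b)^2$; your bookkeeping of the factor $\tfrac14$ (each cell picking up half of each dual-cell gradient energy) and of the non-negative gradient remainder is exactly right. One caveat: when you actually perform the exponential rearrangement, the remainder you obtain is
\begin{equation*}
e^{\phi_K^{n+1}}-e^{\phi_K^{n}}\bigl(\phi_K^{n+1}-\phi_K^{n}+1\bigr),
\end{equation*}
not the printed $e$-part of $\mcal{R}_K$ in \eqref{eq:pot_id_const_b}; the statement's formula has the roles of $\phi^n$ and $\phi^{n+1}$ interchanged (test with $\phi_K^{n+1}=1$, $\phi_K^{n}=0$: the true remainder is $e-2$, while the printed expression gives $1$). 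This is a typo in the statement rather than a flaw in your argument --- both expressions are non-negative by $e^s\geqslant 1+s$, which is all that the subsequent total energy estimate uses. Likewise, your derivation correctly attaches a factor $\tfrac{1}{\dt}$ to the conservative average term, which is missing from the third term of \eqref{eq:dis_potbal} as printed; again this does not matter downstream, since that term is dropped by telescoping when summing over $K\in\M$.
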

\begin{proof}
After multiplying the discrete mass equation by $|K|\phi_{K}^{n+1}$ and using the relation $(a-b)a=\frac{a^2}{2}-\frac{b^2}{2}+\frac{(a-b)^2}{2}$, for any two real numbers $a$ and $b$, we obtain the final result.

Note that the remainder term $\mcal{R}_{K}$ is positive because $e^z(z-1)\geqslant -1$ for any real number $z$ which further implies $e^{x}(x-y-1)+e^y\geqslant 0$, for any two real numbers $x$ and $y$.
\end{proof}
\begin{lemma}[Discrete kinetic energy identity]
\label{lem:dis_kin_energy_loc}
Any solution to the system \eqref{eq:dis_cons_mas}-\eqref{eq:dis_poisson} satisfies the following identity for $1\leqslant i\leqslant d,\; \s\in\E_\intr^{(i)}$ and $0\leqslant n\leqslant{N-1}$:
\begin{align}
\label{eq:dis_kinid_loc}
&\frac{\abs{\Ds}}{2\dt}(\rho^{n+1}_{\Ds}(u^{n+1}_\sigma)^2 -\rho^n_{\Ds}(u^n_\sigma)^2) +\sum_{\substack{\epsilon\in\tilde{\E}(\Ds)}}F^n_{\epsilon, \sigma}\frac{(u^n_{\epsilon,\mathrm{up}})^2}{2}\\& = -\abs{\Ds}(\rho_\s^n u^n_\sigma-Q_\s^{n+1})(\partial^{(i)}_\E \phi^{n+1})_\sigma-\abs{\Ds}Q_\s^{n+1}(\partial^{(i)}_\E \phi^{n+1})_\s+\abs{\Ds} u_\s^n(\partial^{(i)}_\E \Lambda^{n})_\s+\mcal{R}_{\s},\nonumber
\end{align}
where the remainder term is given by
\begin{align}
\label{eq:dis_kinloc_rem}
\mcal{R}_{\s} =\frac{\abs{\Ds}}{2\dt}\rho^{n+1}_{\Ds}\abs{u^{n+1}_\sigma - u^n_\sigma}^2
+\frac{1}{2}\sum_{\substack{\epsilon\in\tilde{\E}(\Ds)\\ \epsilon = \Ds|D_{\s^\prime}}}(F^n_{\epsilon, \sigma})^{-}(u^n_{\s^\prime} - u^n_\s)^2.
\end{align}
\end{lemma}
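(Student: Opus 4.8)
The plan is to mimic the continuous kinetic energy identity \eqref{eq:r_kinbal} at the discrete level by testing the discrete momentum balance against the explicit velocity. Rather than working with the conservative form \eqref{eq:dis_cons_mom} directly, I would start from the equivalent velocity update \eqref{eq:dis_vel_dual}, in which the dual mass balance \eqref{eq:dis_cons_mass_dual} has already been folded in and the convective contribution has been reduced to the upwind difference $\sum_{\epsilon}(F^n_{\epsilon,\sigma})^{-}(u^n_{\sigma'}-u^n_\sigma)$. Multiplying \eqref{eq:dis_vel_dual} by $\abs{\Ds}\rho^{n+1}_{\Ds}u^n_\sigma$ (equivalently, testing the momentum update against the explicit velocity $u^n_\sigma$) produces three groups of terms --- a discrete time derivative, a convective sum, and the source --- which I then treat separately. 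The choice of the explicit multiplier $u^n_\sigma$ is deliberate: it is exactly what places the convected kinetic energy at time level $n$, so as to match the flux $\sum_\epsilon F^n_{\epsilon,\sigma}(u^n_{\epsilon,\mathrm{up}})^2/2$ appearing on the left of \eqref{eq:dis_kinid_loc}.

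For the time-derivative term I would invoke the elementary identity $(v-w)w=\tfrac12(v^2-w^2)-\tfrac12(v-w)^2$ with $v=u^{n+1}_\sigma$ and $w=u^n_\sigma$. This immediately yields $\tfrac{\abs{\Ds}\rho^{n+1}_{\Ds}}{2\dt}\big((u^{n+1}_\sigma)^2-(u^n_\sigma)^2\big)$ together with the dissipative contribution $-\tfrac{\abs{\Ds}\rho^{n+1}_{\Ds}}{2\dt}(u^{n+1}_\sigma-u^n_\sigma)^2$, the latter being precisely the first term of $\mcal{R}_\s$ once transferred to the right. The density index then has to be corrected from $\rho^{n+1}_{\Ds}$ to the conservative difference $\rho^{n+1}_{\Ds}(u^{n+1}_\sigma)^2-\rho^n_{\Ds}(u^n_\sigma)^2$; the discrepancy equals $\tfrac{\abs{\Ds}}{2\dt}(\rho^{n+1}_{\Ds}-\rho^n_{\Ds})(u^n_\sigma)^2$, which the dual mass balance \eqref{eq:dis_cons_mass_dual} converts into $-\tfrac12(u^n_\sigma)^2\sum_{\epsilon}F^n_{\epsilon,\sigma}$.

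The crux is then a purely algebraic, edge-by-edge identity showing that the leftover convective pieces reassemble into the conservative upwind flux plus the remaining half of $\mcal{R}_\s$. Concretely, I must verify that $-\tfrac12(u^n_\sigma)^2\sum_\epsilon F^n_{\epsilon,\sigma}-u^n_\sigma\sum_\epsilon (F^n_{\epsilon,\sigma})^{-}(u^n_{\sigma'}-u^n_\sigma)$ equals $-\sum_\epsilon F^n_{\epsilon,\sigma}\tfrac{(u^n_{\epsilon,\mathrm{up}})^2}{2}+\tfrac12\sum_\epsilon (F^n_{\epsilon,\sigma})^{-}(u^n_{\sigma'}-u^n_\sigma)^2$. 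I would check this one edge at a time, splitting according to the sign of $F^n_{\epsilon,\sigma}$: when $F^n_{\epsilon,\sigma}\geqslant0$ the upwind choice gives $u^n_{\epsilon,\mathrm{up}}=u^n_\sigma$ and $(F^n_{\epsilon,\sigma})^-=0$, while when $F^n_{\epsilon,\sigma}<0$ it gives $u^n_{\epsilon,\mathrm{up}}=u^n_{\sigma'}$ and $(F^n_{\epsilon,\sigma})^-=F^n_{\epsilon,\sigma}$, using the convention $a^-=\tfrac12(a-\abs{a})$. In both cases the claim collapses to an elementary quadratic identity. This bookkeeping with the upwinding and the sign of the flux is the only genuinely delicate step; everything else is a rearrangement.

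Finally, the source contribution is $\abs{\Ds}u^n_\sigma\big(-\rho^n_\s(\partial^{(i)}_\E\phi^{n+1})_\s+(\partial^{(i)}_\E\Lambda^n)_\s\big)$, which I would split into the three displayed source terms of \eqref{eq:dis_kinid_loc} simply by adding and subtracting $\abs{\Ds}Q^{n+1}_\s(\partial^{(i)}_\E\phi^{n+1})_\s$. This regrouping carries no analytic content, but is chosen so that the discrete identity mirrors the continuous decomposition $-\bgrd\phi^\veps\cdot(\rho^\veps\uu{u}^\veps-\uu{q}^\veps)-\uu{q}^\veps\cdot\bgrd\phi^\veps+\uu{u}^\veps\cdot\bgrd\Lambda^\veps$ in \eqref{eq:r_kinbal}. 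Collecting the time, convective and source pieces and moving the two dissipative quantities to the right-hand side then gives \eqref{eq:dis_kinid_loc} with $\mcal{R}_\s$ exactly as stated.
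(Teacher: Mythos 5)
Your proposal is correct and takes essentially the same route as the paper: the paper's own (one-line) proof multiplies the momentum balance \eqref{eq:dis_cons_mom} by $\abs{\Ds}u^n_\sigma$ and invokes the dual mass balance \eqref{eq:dis_cons_mass_dual}, which is precisely what your multiplication of the velocity update \eqref{eq:dis_vel_dual} by $\abs{\Ds}\rho^{n+1}_{\Ds}u^n_\sigma$ amounts to. Your write-up simply supplies the algebra the paper leaves implicit --- the square-completion identity for the time term, the conservative correction via the dual mass balance, the edge-by-edge upwind bookkeeping, and the add-and-subtract of $\abs{\Ds}Q^{n+1}_\s(\partial^{(i)}_\E\phi^{n+1})_\s$ --- and all of these steps, including the signs, check out.
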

\begin{proof}
Multiplying the momentum balance equation \eqref{eq:dis_cons_mom} with $\abs{\Ds}u^n_\sigma$ and using the dual mass balance \eqref{eq:dis_cons_mass_dual}, we readily obtain \eqref{eq:dis_kinid_loc}.
\end{proof}

\begin{theorem}[Total energy estimate]
\label{thm:dis_tot_energy}
Any solution to the system \eqref{eq:dis_cons_mas}-\eqref{eq:dis_poisson} satisfies the global energy inequality
\begin{align}
\label{eq:dis_totbal}
  \sum_{i=1}^{d}\sum_{\s\in\E_{int}^{(i)}}\frac{1}{2}\frac{|\Ds|}{\dt}\left(\rho_{\Ds}^{n+1}(u_{\sigma}^{n+1})^2-\rho_{\Ds}^{n}(u_{\sigma}^{n})^2\right)+&\sum_{K\in\M}\frac{\abs{K}}{\dt}\left(e^{\phi_K^{n+1}}(\phi_K^{n+1}-1)-e^{\phi_K^{n}}(\phi_K^{n}-1)\right)\\
  &+\frac{\veps^2}{2}\sum_{i=1}^d\sum_{\s\in\E^{(i)}_{int}}\frac{\abs{\Ds}}{\dt}\left(|(\partial^{(i)}_\E\phi^{n+1})_{\s}|^2-|(\partial^{(i)}_\E\phi^{n})_{\s}|^2\right) \leqslant 0\nonumber
\end{align}
under the following conditions. 
\begin{enumerate}[label=(\roman*)]
    \item A CFL restriction on the time-step:
    \begin{equation}
        \label{eq:cfl}
        \dt\leqslant\min\Big\{\frac{\rho_{\Ds}^{n+1}\abs{\Ds}}{4\sum_{\epsilon\in\tilde{\E}(\Ds)}(-(F^n_{\epsilon,\s})^{-})}, \frac{1}{2}\Big(\frac{1}{\alpha_{K,\s}^n}\Big)^{\half}\Big\},
    \end{equation}
    with 
    \begin{equation}
    \label{eq:eqn_ansigma}
        \alpha_{K, \s}^n = \frac{\abs{\partial K}}{\abs{K}}\frac{\abs{\sigma}}{\abs{\Ds}}\frac{1}{\rho^{n+1}_{\Ds}}.
    \end{equation}
    \item The following choices for the stabilisation terms $\uu{Q}^{n+1}$ and $\Lambda^{n}$:
    \begin{align}
        \label{eq:dis_Q}
        Q_{\s}^{n+1} &=\eta\dt(\partial^{(i)}_{\E}\phi^{n+1})_{\s},\;\forall\s\in\E^{(i)}_\intr,\; 1\leqslant i \leqslant d, \\
        \label{eq:dis_Lambda}
        \Lambda_K^n &=\frac{\dt}{|K|}\sum_{\s\in\Ek}|\s| u_{\s,K}^n,\;\forall K\in\M,
    \end{align} 
\end{enumerate}
with $\eta\geqslant\frac{(\rho_{\s}^{n})^2}{\rho_{\Ds}^{n+1}}$ for each $\s\in\E^{(i)}_\intr, \ 1\leqslant i \leqslant d$.
\end{theorem}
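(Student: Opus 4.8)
The plan is to add the two local balances proved above, sum the result over the whole mesh, and show that everything surviving on the right-hand side is non-positive. Concretely, I would sum the discrete kinetic energy identity \eqref{eq:dis_kinid_loc} over all $\s\in\E_{int}^{(i)}$ and all $1\leqslant i\leqslant d$, sum the discrete potential energy identity \eqref{eq:dis_potbal} over all $K\in\M$, and add the two. The left-hand side then collects exactly the telescoping-in-time total energy appearing in \eqref{eq:dis_totbal}, so the task reduces to proving that the accumulated right-hand side is $\leqslant 0$.

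Next I would dispose of the ``structural'' terms that cancel or vanish independently of the CFL condition. The dual convective contribution $\sum_{\epsilon\in\tilde{\E}(\Ds)}F^n_{\epsilon,\s}\tfrac{(u^n_{\epsilon,\mathrm{up}})^2}{2}$ telescopes to zero on summation over the dual cells, since the dual fluxes are conservative ($F_{\epsilon,\s}=-F_{\epsilon,\s'}$ across an interior dual edge) while $u_{\epsilon,\mathrm{up}}$ is single-valued on that edge, and boundary edges carry no flux under the homogeneous conditions. The $\veps^2$ divergence-like correction in \eqref{eq:dis_potbal} is likewise a discrete divergence of $\phi^{n+1}\,\Dt(\bgrd\phi)$ and telescopes to zero, using $\bgrd\phi\cdot\uu{\nu}|_{\D\Omega}=0$. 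Crucially, the coupling term $-\phi_K^{n+1}\sum_{\s\in\Ek}\abs{\s}(\rho_\s^n u^n_{\s,K}-Q^{n+1}_{\s,K})$ from the potential identity cancels exactly against the kinetic term $-\abs{\Ds}(\rho_\s^n u^n_\s-Q^{n+1}_\s)(\partial^{(i)}_\E\phi^{n+1})_\s$: writing the former as $-\int_\Omega\phi^{n+1}\divm\uu{v}$ with $v_\s:=\rho_\s^n u_\s^n-Q_\s^{n+1}$ and applying the gradient--divergence duality \eqref{eq:dis_dual} turns it into $+\int_\Omega\gre\phi^{n+1}\cdot\uu{v}$, the negative of the summed kinetic contribution. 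Finally, the potential remainder obeys $\sum_K\mcal{R}_K\geqslant 0$ and enters with a favourable sign.

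I would then insert the stabilisation choices \eqref{eq:dis_Q} and \eqref{eq:dis_Lambda}. With $Q_\s^{n+1}=\eta\dt(\partial^{(i)}_\E\phi^{n+1})_\s$ the leftover term $-\abs{\Ds}Q_\s^{n+1}(\partial^{(i)}_\E\phi^{n+1})_\s$ becomes $-\eta\dt\abs{\Ds}\abs{(\partial^{(i)}_\E\phi^{n+1})_\s}^2\leqslant 0$, a genuine dissipation. With $\Lambda_K^n=\dt(\divm\uu{u}^n)_K$ the source contribution $\sum_{\s}\abs{\Ds}u_\s^n(\partial^{(i)}_\E\Lambda^n)_\s=\int_\Omega\uu{u}^n\cdot\gre\Lambda^n$ becomes, again by \eqref{eq:dis_dual}, $-\dt\sum_K\abs{K}\big((\divm\uu{u}^n)_K\big)^2\leqslant 0$. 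Thus after these reductions the residual right-hand side equals $\sum_\s\mcal{R}_\s$ plus these two non-positive dissipations minus $\sum_K\mcal{R}_K\leqslant 0$.

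The main obstacle is controlling the sign-indefinite kinetic remainder $\mcal{R}_\s$ of \eqref{eq:dis_kinloc_rem}. Its second piece $\tfrac12\sum_\epsilon(F^n_{\epsilon,\s})^{-}(u^n_{\s'}-u^n_\s)^2$ is already $\leqslant 0$, but the first piece $\tfrac{\abs{\Ds}}{2\dt}\rho^{n+1}_{\Ds}\abs{u^{n+1}_\s-u^n_\s}^2$ is positive and must be absorbed. For this I would substitute the velocity update \eqref{eq:dis_vel_dual}, which writes $u^{n+1}_\s-u^n_\s$ as a convective part, a potential-gradient part proportional to $(\partial^{(i)}_\E\phi^{n+1})_\s$, and a source part proportional to $(\partial^{(i)}_\E\Lambda^n)_\s$. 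Squaring and applying a weighted Cauchy--Schwarz inequality (with weights $-(F^-_{\epsilon,\s})\geqslant 0$ for the convective part) splits the positive term into three pieces, each dominated by the corresponding negative term: the convective piece by $\tfrac12\sum_\epsilon(F^-)(u^n_{\s'}-u^n_\s)^2$ under the first branch of \eqref{eq:cfl}; the potential piece by $-\eta\dt\abs{\Ds}\abs{(\partial^{(i)}_\E\phi^{n+1})_\s}^2$ precisely when $\eta\geqslant\frac{(\rho_\s^n)^2}{\rho^{n+1}_{\Ds}}$; and the source piece by $-\dt\sum_K\abs{K}((\divm\uu{u}^n)_K)^2$ via the second branch of \eqref{eq:cfl} with $\alpha^n_{K,\s}$ as in \eqref{eq:eqn_ansigma}, which supplies the extra power of $\dt$ and the mesh geometry needed for a discrete inverse-type bound relating $\sum_\s\abs{\Ds}\abs{(\partial^{(i)}_\E\Lambda^n)_\s}^2$ to $\sum_K\abs{K}((\divm\uu{u}^n)_K)^2$. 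Balancing these three absorptions simultaneously with optimised Young weights, so that the $\eta$-threshold comes out exactly as stated rather than with a spurious constant, is the delicate core of the argument.
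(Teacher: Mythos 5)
Your proposal is correct and follows essentially the same route as the paper's proof: sum the two local identities, use the discrete gradient--divergence duality \eqref{eq:dis_dual} to cancel the coupling terms and expose the $Q$- and $\Lambda$-dissipations, then absorb the positive part of $\mcal{R}_\s$ by squaring the velocity update \eqref{eq:dis_vel_dual} and splitting it three ways, with exactly your mapping of conditions to absorbed pieces (first CFL branch for the convective piece, $\eta\geqslant(\rho_\s^n)^2/\rho_{\Ds}^{n+1}$ for the potential piece, and the $\alpha^n_{K,\s}$ branch for the source piece).
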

\begin{proof}
Taking the sum over all $\s\in\E^{(i)}_\intr,~i=1,2,\dots,d$ in the discrete kinetic energy identity \eqref{eq:dis_kinid_loc}, subsequently dropping the locally conservative terms, and finally rearranging the summands gives
\begin{equation}
\begin{aligned}
\label{eq:sum_kinid_glob}
    &\sum_{\s\in\E_{\intr}}\frac{\abs{\Ds}}{2\dt}(\rho^{n+1}_{\Ds}(u^{n+1}_\sigma)^2 -\rho^n_{\Ds}(u^n_\sigma)^2) \\
    &= -\sum_{i=1}^{d}\sum_{\s\in\E_{int}^{(i)}}\abs{\Ds}\rho_\s^n u^n_\sigma(\partial^{(i)}_\E \phi^{n+1})_\sigma +\sum_{i=1}^{d}\sum_{\s\in\E_{int}^{(i)}}\abs{\Ds}\rho_\s^n u^n_\sigma(\partial^{(i)}_\E \Lambda^n)_\sigma +\sum_{i=1}^{d}\sum_{\s\in\E_{\intr}^{(i)}}\mcal{R}_{\s}.
\end{aligned}    
\end{equation}
Next, we estimate the net remainder term on the right hand side of \eqref{eq:sum_kinid_glob}. To this end, squaring the velocity update \eqref{eq:dis_vel_dual} and using the inequality $(a+b)^2 \leqslant 2a^2 + 2b^2$, we get for each $\s\in\E^{(i)}_\intr$, $1\leqslant i\leqslant d$,
\begin{equation}
\begin{aligned}
\label{eq:dis_kinloc_rhs_rem}
\half\rho^{n+1}_{\Ds}\absq{u^{n+1}_\s - u^n_\s} &\leqslant \dt^2\frac{(\rho^n_\s)^2}{\rho^{n+1}_{\Ds}}((\partial^{(i)}_\E\phi^{n+1})_\s)^2+2\dt^2\frac{1}{\rho^{n+1}_{\Ds}}(\partial^{(i)}_\E\Lambda^n)_\s^2\\
 &+2\frac{\dt^2}{\rho^{n+1}_{\Ds}}\Big(\frac{1}{\abs{\Ds}}\sum_{\epsilon\in\E(\Ds)}(F^n_{\epsilon,\sigma})^{-}(u^n_{\s^{\prime}} - u^n_\s)\Big)^2.
\end{aligned}
\end{equation}
Estimating the remainder term $\mcal{R}_{\s}$ using \eqref{eq:dis_kinloc_rhs_rem} and applying the Cauchy-Schwartz inequality to the second term on the right hand side of \eqref{eq:dis_kinloc_rhs_rem}, analogously as done in \cite[Lemma 3.1]{DVB17}, the equation \eqref{eq:sum_kinid_glob} finally yields
\begin{equation}
\begin{aligned}
\label{eq:dis_kinid_est}
 &\sum_{\s\in\E_{int}}\frac{|\Ds|}{2\dt}\left(\rho_{\Ds}^{n+1}(u_{\sigma}^{n+1})^2-\rho_{\Ds}^{n}(u_{\sigma}^{n})^2\right)\leqslant -\sum_{i=1}^{d}\sum_{\s\in\E_{int}^{(i)}}|\Ds|\rho_{\s}^{n}u_{\s}^{n}(\partial^{(i)}_{\mcal{E}}\phi^{n+1})_{\s}\\
&+\sum_{i=1}^d\sum_{\s\in\E^{(i)}_\intr}\bigg(\half + \frac{2\dt}{\rho^{n+1}_{\Ds}\abs{\Ds}}(F^n_{\epsilon,\s})^{-}\bigg)\sum_{\epsilon\in\tilde{\E}(\Ds)}(F^n_{\epsilon,\s})^{-}(u^n_{\s^{\prime}} - u^n_\s)^2+\mcal{A}+\tilde{\mcal{R}}.
\end{aligned}
\end{equation}
Here,
\begin{subequations}
\begin{align}
\mcal{A}&=4\dt\sum_{K\in\M}(\Lambda_K^n)^2\sum_{\s\in\Ek}\frac{|\s|^2}{|\Ds|}\frac{1}{\rho_{\Ds}^{n+1}}-\sum_{K\in\M}\Lambda_K^n\sum_{\s\in\Ek}|\s|u_{\s,K}^{n},\\
\tilde{\mcal{R}}&=\dt\sum_{i=1}^{d}\sum_{\s\in\E_{int}^{(i)}}|\Ds|\frac{(\rho_{\s}^n)^2}{\rho_{\Ds}^{n+1}}(\partial^{(i)}_{\E}\phi^{n+1})_{\s}^2.
\end{align}
\end{subequations}
Next, taking the sum over all $K\in\M$ in the potential balance \eqref{eq:dis_potbal}, dropping the locally conservative third term on the right hand side, and using the discrete div-grad duality \eqref{eq:dis_dual} yields
\begin{equation}
\label{eq:dis_potbal_glob}
\begin{aligned}    
    &\frac{\veps^2}{2\dt}\sum_{i=1}^{d}\sum_{\s\in\E^{(i)}}\abs{\Ds}\left(\abs{(\partial^{(i)}_\E\phi^{n+1})_{\s}}^2-\abs{(\partial^{(i)}_\E\phi^{n})_{\s}}^2\right)+\sum_{K\in\M}\sum_{\s\in\E(K)}|\s|\rho_{\s}^{n}u_{\s,K}^{n}\phi_{K}^{n+1}\\
 &+\sum_{K\in\M}\frac{\abs{K}}{\dt}\left(e^{\phi_K^{n+1}}(\phi_K^{n+1}-1)-e^{\phi_K^{n}}(\phi_K^{n}-1)\right)\leqslant-\sum_{i=1}^{d}\sum_{\s\in\E^{(i)}}|\Ds|\Q_{\s}^{n+1}(\D^{(i)}_{\E}\phi^{n+1})_{\s}.
\end{aligned}
\end{equation}
In the above inequality, we have used the identity \eqref{eq:dis_potbal} and the fact that $\mcal{R}_K\geqslant 0$ for each $K\in\M$. 

Finally, adding \eqref{eq:dis_kinid_est} and \eqref{eq:dis_potbal_glob}, and using the discrete div-grad duality \eqref{eq:dis_dual} again, we obtain the inequality
\begin{equation}
\label{eq:dis_tot_est}
\begin{aligned}
    \sum_{i=1}^{d}\sum_{\s\in\E_{\intr}^{(i)}}\frac{1}{2}\frac{|\Ds|}{\dt}\left(\rho_{\Ds}^{n+1}(u_{\sigma}^{n+1})^2-\rho_{\Ds}^{n}(u_{\sigma}^{n})^2\right)+\frac{\veps^2}{2}\sum_{i=1}^d\sum_{\s\in\E^{(i)}_\intr}\frac{\abs{\Ds}}{\dt}\left(|(\partial^{(i)}_\E\phi^{n+1})_{\s}|^2-|(\partial^{(i)}_\E\phi^{n})_{\s}|^2\right)\\+\sum_{K\in\M}\frac{\abs{K}}{\dt}\left(e^{\phi_K^{n+1}}(\phi_K^{n+1}-1)-e^{\phi_K^{n}}(\phi_K^{n}-1)\right)\leqslant \mcal{A} + \mcal{R}+\mcal{S},
\end{aligned}
\end{equation}
where
\begin{align}
     \mcal{A} &= \dt\sum_{K\in\M}\sum_{\s\in\Ek}\frac{\abs{\s}}{\abs{\partial K}}(4\dt^2\alpha_{K, \s}^n-1)\frac{1}{\abs{K}}\Big(\sum_{\s\in\Ek}|\s| u_{\s,K}^n\Big)^2,\\
    \mcal{R} &= \dt\sum_{i=1}^{d}\sum_{\s\in\E_{\intr}^{(i)}}|\Ds|(\partial_{\E}^{(i)}\phi^{n+1})_{\s}^2\bigg(\frac{(\rho_\s^n)^2}{\rho_{\Ds}^{n+1}}-\eta\bigg), \\
     \mcal{S} &= \sum_{i=1}^d\sum_{\s\in\E^{(i)}_\intr}\bigg(\half + \frac{2\dt}{\rho^{n+1}_{\Ds}\abs{\Ds}}(F^n_{\epsilon,\s})^{-}\bigg)\sum_{\epsilon\in\tilde{\E}(\Ds)}(F^n_{\epsilon,\s})^{-}(u^n_{\s^{\prime}} - u^n_\s)^2,
\end{align}
with $\alpha_{K, \s}^n = \frac{\abs{\partial K}}{\abs{K}}\frac{\abs{\sigma}}{\abs{\Ds}}\frac{1}{\rho^{n+1}_{\Ds}}$ and the stabilisation terms  $\uu{Q}^{n+1}$ and $\Lambda^{n}$ being chosen as in \eqref{eq:dis_Q} and \eqref{eq:dis_Lambda}, respectively. 

At the end, in order to get the required entropy inequality \eqref{eq:dis_totbal}, it is enough to show that the remainder terms $\mcal{A}$, $\mcal{R}$, $\mcal{S}$ are non-positive. It is straightforward to verify that the following conditions are sufficient to make the remainder terms non-positive:
\begin{align}
    \label{eq:Q_quad}
    \eta&\geqslant \frac{(\rho_\s^n)^2}{\rho_{\Ds}^{n+1}},\;\forall\s\in\E^{(i)}_\intr,\; 1\leqslant i\leqslant d,
    \\
    \label{eq:cfl1}
    4\dt^2\alpha^n_{K,\s}-1 &\leqslant 0,\;\forall K\in\M,
    \\
    \label{eq:tstep_ke}
    \half + \frac{2\dt}{\rho^{n+1}_{\Ds}\abs{\Ds}}(F^n_{\epsilon,\s})^{-} &\geqslant 0,\;\forall\s\in\E^{(i)}_\intr,\; 1\leqslant i\leqslant d.
\end{align}
\end{proof}
\begin{remark}
  In the above theorem, the choice for the stabilisation parameter $\eta$ is implicit in nature. However, we observe that the following implicit time-step restriction 
  \begin{equation}
    \dfrac{\dt}{|\Ds|}\sum_{\epsilon\in\tilde\E(\Ds)}
    \frac{\abs{F^n_{\epsilon,\s}}}{\rho_{\Ds}^{n+1}}\leqslant\frac{1}{4}
    \label{eq:suftime}  
  \end{equation}
  gives a sufficient condition which, along with the time-step restriction \eqref{eq:cfl1}, yields the required CFL condition \eqref{eq:cfl}. From \eqref{eq:suftime} and the dual mass balance
  \eqref{eq:dis_cons_mass_dual}, we deduce that
  \begin{equation}
    \frac{5}{4}\rho_{\Ds}^{n+1} - \rho_{\Ds}^n \geqslant  \rho_{\Ds}^{n+1}
    - \rho_{\Ds}^n +
    \frac{\dt}{\abs{\Ds}}\sum_{\epsilon\in\tilde\E(\Ds)}\abs{F^n_{\epsilon,\s}}\geqslant
    0. 
  \end{equation}
  Hence, we get
  \begin{equation}
    \label{eq:rho_5/4}
    \frac{\rho_{\Ds}^n }{\rho_{\Ds}^{n+1}}\leqslant \frac{5}{4}.
  \end{equation}
  Therefore, at each interface $\sigma$, choosing $\eta$ such that
  \begin{equation}
  \label{eq:eta_expl}
    \eta>\frac{5(\rho_\s^n)^2}{4\rho_{\Ds}^n},
  \end{equation}
  will enable an explicit choice for $\eta$; see also \cite{AGK23, DVB20} for analogous considerations.   
\end{remark}

\begin{remark}
  Note the the stability analysis presented above does not rely on the choice of $\rho_\s$. However, it is the quasineutral limit analysed in Section~\ref{sec:quas_lim} that necessitates the above choice.
\end{remark}

\subsection{Existence of a Numerical Solution}
\label{sec:exist_soln}
The mass update \eqref{eq:dis_cons_mas} and the Poisson equation \eqref{eq:dis_poisson} are coupled due to the presence of the stabilisation terms and both are nonlinear in $\phi^{n+1}$. However, once $\phi^{n+1}$ is calculated, the mass and the momentum updates can be explicitly evaluated to get the density and the velocity. In what follows, we establish the existence of a discrete solution to the numerical scheme \eqref{eq:dis_cons_mas}-\eqref{eq:dis_poisson}. Our treatment uses a few classical tools from topological degree theory in finite dimensions \cite{Dei85} and a discrete comparison principle for nonlinear elliptic equations.  
\begin{theorem}
    \label{thm:existence}
    Let $(\rho^n,\uu{u}^n, \phi^n)\in
    L_{\mcal{M}}(\Omega)\times\uu{H}_{\mcal{E},0}(\Omega)\times L_{\mcal{M}}(\Omega)$ be such
    that $\rho^{n}>0$ on $\Omega$. Then, there exists a solution
    $(\rho^{n+1},\uu{u}^{n+1},\phi^{n+1})\in
    L_{\mcal{M}}(\Omega)\times\uu{H}_{\mcal{E},0}(\Omega)\times L_{\mcal{M}}(\Omega)$ of
    \eqref{eq:dis_cons_mas}-\eqref{eq:dis_poisson}, satisfying
    $\rho^{n+1}>0$ on $\Omega$. 
\end{theorem}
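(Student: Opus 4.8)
The plan is to decouple the coupled nonlinear system \eqref{eq:dis_cons_mas}--\eqref{eq:dis_poisson} into a single scalar nonlinear elliptic problem for $\phi^{n+1}$ on the finite-dimensional space $\Lm$. Substituting the stabilisation choice \eqref{eq:dis_Q} into the mass flux \eqref{eq:stabterms} and using the definition \eqref{eq:disc_lap} of the discrete Laplacian, the mass balance \eqref{eq:dis_cons_mas} becomes $\rho_K^{n+1}=S_K^n+\eta\dt^2(\Delta_{\M}\phi^{n+1})_K$, where the explicit datum $S_K^n:=\rho_K^n-\frac{\dt}{\abs{K}}\sum_{\s\in\E(K)}\abs{\s}\rho_\s^n u_{\s,K}^n$ depends only on $(\rho^n,\uu{u}^n)$. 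Inserting this into the Poisson equation \eqref{eq:dis_poisson} and writing $\mu:=\veps^2+\eta\dt^2>0$, the potential must solve
\begin{equation*}
-\mu(\Delta_{\M}\phi^{n+1})_K+e^{\phi_K^{n+1}}=S_K^n,\qquad\forall K\in\M.
\end{equation*}
Conversely, from any solution of this reduced problem $\rho^{n+1}$ is recovered through \eqref{eq:dis_poisson} and $\uu{u}^{n+1}$ through the explicit dual update \eqref{eq:dis_vel_dual}, which is meaningful once the dual density $\rho_{\Ds}^{n+1}$ defined by \eqref{eq:mass_dual} is positive. Hence the theorem reduces to solving the displayed equation and proving $\rho^{n+1}>0$.

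For the existence of $\phi^{n+1}$ I would look for a zero of the map $\mcal{F}(\phi)_K:=-\mu(\Delta_{\M}\phi)_K+e^{\phi_K}-S_K^n$ on $\Lm$ and apply topological degree theory \cite{Dei85}. I deform $\mcal{F}$ through the homotopy $H(\phi,\lambda)_K:=-\mu(\Delta_{\M}\phi)_K+e^{\phi_K}-R_{\lambda,K}$, with $R_\lambda:=\lambda S^n+(1-\lambda)c_0$ and $c_0:=\frac{1}{\abs{\Omega}}\sum_{K\in\M}\abs{K}\rho_K^n>0$ chosen so that the right-hand side keeps the same positive total integral for every $\lambda\in[0,1]$; at $\lambda=0$ the equation is $e^{\phi_K}=c_0$, with the unique constant solution $\phi\equiv\ln c_0$ and nonzero degree. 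The decisive step is a uniform a priori bound on the solution set. Testing $H(\phi,\lambda)=0$ with the constant function and using that both $\Delta_{\M}$ and the convective fluxes telescope under the no-flux condition yields the mass-type identity $\sum_{K\in\M}\abs{K} e^{\phi_K}=\sum_{K\in\M}\abs{K} R_{\lambda,K}=\sum_{K\in\M}\abs{K}\rho_K^n$, valid for all $\lambda$; testing instead with $\phi$ and invoking the gradient-divergence duality \eqref{eq:dis_dual} bounds $\norm{\gre\phi}$. Since on a fixed mesh $\norm{\phi-\bar\phi}_{L^\infty}\lesssim\norm{\gre\phi}$ for the cell average $\bar\phi$, while the integral identity pins $\bar\phi$ from above and below, all solutions lie in a ball $B_R\subset\Lm$ with $R$ independent of $\lambda$; these bounds are consistent with, and may alternatively be extracted from, the energy estimate of Theorem~\ref{thm:dis_tot_energy}. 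Homotopy invariance of the degree then produces a solution $\phi^{n+1}$. I note that $\mcal{F}$ is in fact the gradient of the strictly convex, coercive functional $\frac{\mu}{2}\norm{\gre\phi}^2+\sum_{K\in\M}\abs{K}(e^{\phi_K}-S_K^n\phi_K)$, so existence and uniqueness could equally be obtained variationally.

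The positivity $\rho^{n+1}>0$ is, to my mind, the main obstacle. Because the logarithmic-mean mass flux \eqref{eq:rho_sig_choice} is not upwind, the explicit update $S^n$ need not be sign-definite, and a simple two-cell computation reveals that $\rho^{n+1}$ can turn negative whenever some $S_K^n<0$; positivity therefore cannot be unconditional and must be tied to the time step. Under a CFL-type restriction compatible with \eqref{eq:cfl}, in the sufficient form that keeps the explicit update non-negative, one has $\underline S:=\min_{K\in\M}S_K^n>0$. I would then use the discrete comparison principle for the reduced problem: the operator $\phi\mapsto-\mu\Delta_{\M}\phi+e^{\phi}$ is inverse-monotone since $\Delta_{\M}$ has non-negative off-diagonal entries and vanishing row sums (a discrete maximum principle) while $e^{(\cdot)}$ is strictly increasing. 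Evaluating it at the constant subsolution $c:=\ln\underline S$ gives $-\mu\Delta_{\M}c+e^{c}=\underline S\leqslant S_K^n$ for every $K$, whence $\phi_K^{n+1}\geqslant c$ and $e^{\phi_K^{n+1}}\geqslant\underline S$. Eliminating $(\Delta_{\M}\phi^{n+1})_K$ between \eqref{eq:dis_poisson} and the mass update gives $\rho_K^{n+1}=\frac{\veps^2 S_K^n+\eta\dt^2 e^{\phi_K^{n+1}}}{\mu}\geqslant\underline S>0$, which also renders the dual densities $\rho_{\Ds}^{n+1}$ positive and hence \eqref{eq:dis_vel_dual} well defined. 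The most delicate point I anticipate is arranging the CFL condition, the positivity of $S^n$, and the comparison principle consistently, given that \eqref{eq:cfl} itself already involves the yet-to-be-shown positive dual density $\rho_{\Ds}^{n+1}$.
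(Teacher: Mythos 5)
Your proposal is correct in substance but follows a genuinely different route from the paper. The paper runs the topological degree argument on the coupled pair $(\rho,\phi)\in L_{\mcal{M}}(\Omega)^2$: it deforms the flux terms by the parameter $\lambda$ (with $\phi$ obtained at each $\lambda$ from the nonlinear Neumann problem \eqref{eq:dis_neumann}), gets the upper bound on $\rho$ from total mass conservation, the lower bound $\rho_K>\frac{4}{5}\min_K\rho^n_K$ from the sufficient time-step condition \eqref{eq:suftime} via \eqref{eq:rho_5/4}, and the bounds on $\phi$ from the comparison principle (Theorem~\ref{thm:phi_bdd}); homotopy invariance then yields a zero of $H(1,\cdot,\cdot)$ in $V$. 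You instead eliminate $\rho^{n+1}$ algebraically and arrive at the single scalar problem $-\mu(\Delta_{\M}\phi^{n+1})_K+e^{\phi^{n+1}_K}=S^n_K$ with $\mu=\veps^2+\eta\dt^2$ --- precisely the reduction the paper uses for its numerical implementation in Section~\ref{sec:num_res}, but not in its existence proof. This buys you several things: the reduced problem has exactly the form of the Appendix problem \eqref{eq:dis_poisson_comp} (with $\veps^2$ replaced by $\mu$), so the existence, uniqueness and comparison results there apply verbatim once $S^n>0$; the variational formulation gives uniqueness of $\phi^{n+1}$ for free; and your convex-combination identity $\rho^{n+1}_K=(\veps^2S^n_K+\eta\dt^2e^{\phi^{n+1}_K})/\mu$ yields a transparent, quantitative positivity bound $\rho^{n+1}_K\geqslant\min_K S^n_K$ under a CFL restriction ($S^n>0$) that is \emph{explicit} in time-level-$n$ data, whereas the paper's condition \eqref{eq:suftime} is implicit. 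Note that both proofs are conditional on a time-step restriction even though the theorem statement carries none (the paper concedes this in the remark following its proof), so your flagging of this point is consistent with, not weaker than, the paper.

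One caveat on ordering: in your degree-theoretic paragraph, the uniform a priori bounds (testing with $\phi$, and pinning the mean from $\sum_K\abs{K}e^{\phi_K}=\sum_K\abs{K}R_{\lambda,K}$) silently require $R_{\lambda,K}$ to stay positive; if some $S^n_K<0$, the function $\phi\mapsto(R-e^{\phi})\phi$ is not bounded above as $\phi\to-\infty$ and the gradient bound fails. You invoke the CFL condition guaranteeing $\underline{S}>0$ only afterwards, for positivity. Hoist it to the start: with $R_{\lambda,K}\geqslant\min(\underline{S},c_0)>0$ for all $\lambda$, the comparison principle of Theorem~\ref{thm:dis_comp} immediately gives the $\lambda$-uniform two-sided bounds $\ln\min(\underline{S},c_0)\leqslant\phi_K\leqslant\ln\max\big(\max_K S^n_K,\,c_0\big)$, which closes the degree argument and renders the testing computations unnecessary. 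With that reordering, your proof is complete.
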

\begin{proof}
Let $\bar{C}>0$ and $\underline{c}>0$ be constants such that 
\begin{equation}
\label{eq:C_thm_ex}
    \bar{C}>\frac{\abs{\Omega}\max_{K\in\mcal{M}}\{\rho_{K}^n\}}{\min_{K\in\mcal{M}}\{\abs{K}\}},\, \quad\underline{c}<\frac{4}{5}\min_{K\in\mcal{M}}\{\rho_K^n\}.
\end{equation}
Consider the bounded, open subset $V$ of $L_{\mcal{M}}(\Omega)^2$ defined by
\begin{equation*}
    V=\big\{(\rho,\phi)\in L_{\mcal{M}}(\Omega)^2\colon \underline{c}<\rho_{K}<\bar{C} \text{ and } \ln(\underline{c})<\phi_{K}<\ln(\bar{C}), \ \forall K\in\mcal{M}\big\}.
\end{equation*}
We introduce a continuous function $H\colon[0,1]\times L_{\mcal{M}}(\Omega)^2\rightarrow L_{\mcal{M}}(\Omega)$, via 
\begin{equation*}
    H(\lambda,\rho,\phi)=\sum_{K\in\mcal{M}}H(\lambda,\rho,\phi)_{K}\mcal{X}_{K},
\end{equation*}
where
\begin{equation*}
    H(\lambda,\rho,\phi)_{K}=\frac{1}{\dt}(\rho_{K}-\rho^n_{K})+\frac{\lambda}{\abs{K}}\sum_{\s\in\mcal{E}(K)}\abs{\s}(\rho^n_{\s} u^n_{\s,K}-\eta\dt(\partial_{\E}^{(i)}\phi)_{\s,K}), \ \forall K \in \mcal{M}.
\end{equation*}
Here, $\phi$ is the solution of the discrete Neumann elliptic problem
\begin{equation}
\label{eq:dis_neumann}
\begin{aligned}
   -\lambda\veps^2(\Delta_{\M}\phi)_{K}+e^{\phi_K}&=\rho_K, \quad \forall K\in\M \\
   (\partial_\E^{(i)}\phi)_\s&=0, \quad \forall \s\in\E_\extr^{(i)}, \quad 1\leq i\leq d.
\end{aligned}
\end{equation}
Note that the above problem is nonlinear. In the Appendix, we prove a comparison principle that ensures the existence of a solution for a given $\rho_K>0$. 

It is easy to note that $H(\lambda,\rho,\phi)$ is a homotopy connecting $H(0,\rho,\phi)$ and $H(1,\rho,\phi)$. In order to establish the existence of discrete solution $(\rho^{n+1},\phi^{n+1})$ of \eqref{eq:dis_cons_mas} and \eqref{eq:dis_poisson}, we need to show that for any $\lambda\in [0,1]$, $H(\lambda,\cdot,\cdot)$ has a non-zero topological degree with respect to $V$. In other words, we need to show that $H(\lambda,\cdot,\cdot)\neq 0$ on $\D V$, for all $\lambda\in [0,1]$. On the contrary, let us assume that $H(\lambda,\rho,\phi)=0$, for some $\lambda\in[0,1]$ and $(\rho,\phi)\in\D V$, which implies
\begin{equation}
\label{eq:H_zero}
    \frac{1}{\dt}(\rho_{K}-\rho^n_{K})+\frac{\lambda}{|K|}\sum_{\s\in\mcal{E}(K)}\abs{\s}(\rho^n_{\s} u^n_{\s,K}-\eta\dt(\partial_{\E}^{(i)}\phi)_{\s,K})=0, \ \forall K \in \mcal{M}.
\end{equation}
with $\phi$ solving the problem \eqref{eq:dis_neumann}. Summing \eqref{eq:H_zero} over all $K\in\mcal{M}$, and using the definition of $\bar{C}$ from \eqref{eq:C_thm_ex}, we obtain
\begin{equation}
\label{eq:rhoKub}
    \rho_K\leq \frac{\abs{\Omega}\max_{K\in\mcal{M}}\{\rho_{K}^n\}}{\min_{K\in\mcal{M}}\{\abs{K}\}}< \bar{C}, \ \forall K\in\mcal{M}.
\end{equation}
Further, from the bound \eqref{eq:rho_5/4} obtained from the sufficient condition on the time-step, we have for each $K\in\mcal{M}$ and any $\lambda\in[0,1]$,
\begin{equation}
    \label{eq:ex_pos}
    \rho_K>\frac{4}{5}\min_{K\in\mcal{M}}\{\rho_K^n\}>\underline{c}.
\end{equation}
Combining the above inequality with \eqref{eq:rhoKub} gives $\underline{c}<\rho_K<\bar{C}$. Theorem~\ref{thm:phi_bdd} then gives the inequality $\ln(\underline{c})<\phi_{K}<\ln(\bar{C})$ for each $K\in\mcal{M}$, a contradiction as $(\rho,\phi)\in\D V$. Hence, $0\notin H(\{\lambda\}\times\partial V)$ for any $\lambda\in[0,1]$, and by topological degree arguments for finite dimensional spaces \cite{Dei85}, we get that $\deg(H(1,\cdot, \cdot),V,0)=\deg(H(0,\cdot, \cdot),V,0)$. Since $\deg(H(0,\cdot,\cdot),V,0)\neq 0$, we conclude that $H(1,\cdot,\cdot)$ has a zero in $V$, i.e.\ there exists a solution $(\rho^{n+1},\phi^{n+1})\in L_{\mcal{M}}(\Omega)^2$ of \eqref{eq:dis_cons_mas}, such that $\rho^{n+1}>0$.
\end{proof}
\begin{remark}
    Note that the positivity of $\rho$ follows from the sufficient condition on the time-step in Section~\ref{sec:num_res}.
\end{remark}

\section{Weak Consistency of the Scheme}
\label{sec:weak_cons}
The aim of this section is to establish a Lax-Wendroff-type weak consistency of the scheme \eqref{eq:dis_cons_mas}-\eqref{eq:dis_poisson}. We consider a sequence of numerical solutions satisfying certain boundedness assumptions, and generated by successive mesh refinements. If this sequence of solutions converges strongly to a limit, then the limit must be a weak solution of the EPB system. Considering a bounded initial data $(\rho^{\veps}_0,\uu{u}^{\veps}_0,\phi^{\veps}_0)\in L^\infty(\Omega)^{d+2}$ and recalling the definition of a weak solution as in Definition~\ref{def:weak_soln_defn}, we prove the following Lax-Wendroff-type consistency result. In the theorem, we omit the superscript $\veps$ on the unknowns, wherever necessary for the sake of simplicity of writing.
\begin{theorem}
\label{thm:weak_cons}
Let $\Omega$ be an open, bounded set of $\mbb{R}^d$. Assume that $\big(\mcal{T}^{(m)},\delta t^{(m)}\big)_{m\in\mbb{N}}$ is a sequence of discretisations such that both $\lim_{m\rightarrow \infty}\delta t^{(m)}$ and $\lim_{m\rightarrow \infty}h_{\mcal{T}^{(m)}}$ are $0$. Let $\big(\rho^{(m)},\uu{u}^{(m)},\phi^{(m)}\big)_{m\in\mbb{N}}$ be the corresponding sequence of discrete solutions with respect to an initial data $(\rho^\veps_0,\uu{u}^\veps_0,\phi^\veps_{0})\in L^\infty(\Omega)^{d+2}$, where $\veps>0$ is fixed. We assume that $(\rho^{(m)},\uu{u}^{(m)}, \phi^{(m)})_{m\in\mbb{N}}$ satisfies the following.
\begin{enumerate}[label=(\roman*)]
\item $\big(\rho^{(m)},\uu{u}^{(m)}\big)_{m\in\mbb{N}}$ is uniformly bounded in $L^\infty(Q)^{1+d}$, i.e.\ 
\begin{align}
\ubar{C}<(\rho^{(m)})^n_K &\leqslant \bar{C}, \ \forall K\in\mcal{M}^{(m)}, \ 0\leqslant n\leqslant N^{(m)}, \ \forall m\in\mbb{N}\label{eq:dens_abs_bound}, \\
|(u^{(m)})^n_\sigma| &\leqslant C, \ \forall \sigma\in\mcal{E}^{(m)}, \ 0\leqslant n\leqslant N^{(m)}, \ \forall m\in\mbb{N}\label{eq:u_abs_bound}. 
\end{align}
where $\ubar{C}, \bar{C}, C>0$ are constants independent of the discretisations. 
\item $\big(\rho^{(m)},\uu{u}^{(m)}, \phi^{(m)}\big)_{m\in\mbb{N}}$ converges to $(\rho^\veps,\uu{u}^\veps, \phi^\veps)\in L^\infty(0, T; L^\infty(\Omega)\times L^\infty(\Omega)^d\times H^1(\Omega))$ in $L^r(Q_T)^{1+d+1}$ for $1\leqslant r<\infty$.
\end{enumerate}
Assume furthermore that the sequence of discretisations $\big(\mcal{T}^{(m)},\delta t^{(m)}\big)_{m\in\mbb{N}}$ satisfies the mesh regularity conditions:
\begin{equation}
\label{eq:CFL_meshpar}
\frac{\delta t^{(m)}}{\min_{K\in\mcal{M}^{(m)}}\abs{K}}\leqslant\theta,\ \max_{K \in \mcal{M}^{(m)}} \frac{\diam(K)^2}{\abs{K}}\leqslant\theta,\;\forall m\in\mbb{N},
\end{equation}
where $\theta>0$ is independent of the discretisations. Then $(\rho^\veps,\uu{u}^\veps,\phi^\veps)$ satisfies the weak formulation \eqref{eq:weak_soln_mas}-\eqref{eq:weak_soln_poisson}.
\end{theorem}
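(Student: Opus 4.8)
The plan is to prove this Lax–Wendroff-type consistency result by a standard strategy: for each of the three weak formulations \eqref{eq:weak_soln_mas}, \eqref{eq:weak_soln_mom}, \eqref{eq:weak_soln_poisson}, take a smooth compactly supported test function, multiply the corresponding discrete equation by the nodal/cell value of the test function, sum over all cells (primal for mass and Poisson, dual for momentum) and all time steps, and then reorganise the resulting discrete sums by discrete integration by parts (using the div–grad duality \eqref{eq:dis_dual}). I would then show that each discrete sum converges, as $m\to\infty$, to the corresponding integral in the weak formulation, with all consistency-error (remainder) terms vanishing in the limit. The uniform $L^\infty$ bounds \eqref{eq:dens_abs_bound}–\eqref{eq:u_abs_bound}, the strong $L^r$ convergence of assumption (ii), and the mesh-regularity conditions \eqref{eq:CFL_meshpar} are exactly the ingredients needed to pass to the limit.

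I would treat the mass equation first, as it is the cleanest. After multiplying \eqref{eq:dis_cons_mas} by $\delta t\,\abs{K}\,\psi_K^n$ and summing, a discrete Abel summation (summation by parts) in time transfers the difference operator onto $\psi$, producing a discrete analogue of $\int\int \rho\,\partial_t\psi$ plus the initial-data term, while the flux sum, after reindexing over edges, yields a discrete analogue of $\int\int \rho\uu{u}\cdot\grd\psi$. The key point is that the stabilisation contribution $\Q_{\s,K}^{n+1}=\eta\dt(\partial^{(i)}_\E\phi^{n+1})_\s$ carries an extra factor of $\dt$, so under \eqref{eq:CFL_meshpar} this term is $O(\dt)$ relative to the physical flux and vanishes in the limit; the smoothness of $\psi$ lets me replace edge/cell values of $\psi$ by pointwise values up to $O(h_{\mcal{T}})$ errors, which are controlled by the uniform bounds. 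The momentum equation \eqref{eq:weak_soln_mom} is handled on the dual mesh in the same spirit, with the extra subtlety that the upwind velocities $u_{\epsilon,\mathrm{up}}^n$ differ from the centred value $u_\sigma^n$ only by a numerical-diffusion term whose consistency error is again $O(h_{\mcal{T}})$ under the mesh regularity; the forcing term $-\rho_\s^n(\partial^{(i)}_\E\phi^{n+1})_\s$ converges to $\int\int\rho\grd\phi\cdot\uu\psi$ using the strong convergence of $\rho^{(m)}$ together with the $H^1$-convergence of $\phi^{(m)}$, and the source stabilisation $\bgrd\Lambda^n$ again carries the vanishing $\dt$ factor. For the Poisson equation \eqref{eq:weak_soln_poisson} I would multiply \eqref{eq:dis_poisson} by $\abs{K}\psi_K^{n+1}$, sum, apply the duality \eqref{eq:dis_dual} to turn $\Delta_\M\phi^{n+1}$ into $-\grd_\E\phi^{n+1}\cdot\grd_\E\psi$, and pass to the limit, where the nonlinear term $e^{\phi^{(m)}}$ converges by continuity of the exponential together with the $L^\infty$ bound on $\phi$ inherited from the density bound via the discrete comparison principle (Theorem~\ref{thm:phi_bdd}).

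\textbf{The main obstacle} I anticipate is the rigorous control of the momentum-flux consistency error, i.e.\ showing that the upwind convective term $\sum_\epsilon F_{\epsilon,\sigma}^n u_{\epsilon,\mathrm{up}}^n$ converges to $\int\int(\rho\uu u\otimes\uu u):\grd\uu\psi$. This requires carefully reconstructing the dual-edge mass fluxes $F_{\epsilon,\sigma}^n$ as linear combinations of primal fluxes, commuting the discrete and continuous operators, and bounding the upwind-versus-centred discrepancy; the delicate estimate is that the total numerical diffusion, which scales like $\sum|F_{\epsilon,\sigma}^n|(u_{\s'}^n-u_\s^n)^2$-type quantities weighted against $\grd\uu\psi$, tends to zero. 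Because we only have $L^\infty$ bounds and $L^r$ strong convergence (not BV bounds or a uniform bound on the numerical dissipation of the velocity), the argument must lean on the smoothness of the test function to absorb velocity differences through $\grd\uu\psi$, combined with the second mesh-regularity condition $\diam(K)^2/\abs{K}\leqslant\theta$ to keep the geometric weights uniformly bounded. Once this term is dispatched, the remaining passages to the limit are routine applications of dominated convergence justified by the uniform $L^\infty$ bounds and the strong convergence hypothesis.
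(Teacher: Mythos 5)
Your overall strategy --- multiply by interpolated test functions, sum by parts in space and time, and pass to the limit using the uniform $L^\infty$ bounds, the strong $L^r$ convergence and the mesh regularity --- is exactly the architecture of the paper's proof; the paper simply outsources the convection-operator consistency (including the upwind momentum flux you identify as the main obstacle) to cited results, and treats in detail only the stabilisation remainders, the momentum source terms and the Poisson equation.

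There is, however, one step in your plan that would fail as written. For the momentum forcing term you propose to pass to the limit in $-\rho^n_\s(\partial^{(i)}_\E\phi^{n+1})_\s$ ``using the strong convergence of $\rho^{(m)}$ together with the $H^1$-convergence of $\phi^{(m)}$''. No $H^1$-convergence of $\phi^{(m)}$ is among the hypotheses: assumption (ii) only gives convergence in $L^r(Q_T)$, the limit $\phi^\veps$ merely \emph{belonging} to $L^\infty(0,T;H^1(\Omega))$. The missing ingredients are (a) the uniform $L^\infty$ bound $\ln\ubar{C}\leqslant(\phi^{(m)})^n_K\leqslant\ln\bar{C}$, inherited from \eqref{eq:dens_abs_bound} through the discrete comparison principle (Theorem~\ref{thm:phi_bdd}) --- which you invoke for the Poisson equation but not here --- and (b) a uniform-in-$m$ bound (for fixed $\veps$) on the discrete gradients $\grd_{\mcal{E}}\phi^{(m)}$ in $L^2$, which one obtains either from the global energy inequality \eqref{eq:global_energy_ineq} or directly by testing the discrete Poisson equation \eqref{eq:dis_poisson} with $\phi^{n+1}$ and using the duality \eqref{eq:dis_dual} together with the bounds in (a). With (a) and (b) one extracts a weakly convergent subsequence of discrete gradients, identifies the weak limit with $\grd\phi^\veps$ by duality against smooth test functions (using only the $L^r$ convergence of $\phi^{(m)}$), and concludes by weak-times-strong convergence in the product with $\rho^{(m)}$. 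This is precisely the route the paper takes by reference to its earlier Euler--Poisson work, with Theorem~\ref{thm:phi_bdd} supplying the bound.

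A smaller caveat: your claim that the mass and momentum stabilisation remainders vanish merely because they ``carry an extra factor of $\dt$'' hides the factor $\abs{\s}/\abs{\Ds}\sim h^{-1}$ sitting inside the discrete gradient that defines $Q^{n+1}_\s$ and $(\partial^{(i)}_\E\Lambda^n)_\s$. The bare $O(\dt)$ count is not sufficient on its own: you need either the first mesh condition in \eqref{eq:CFL_meshpar}, which gives $\dt\lesssim\theta\min_K\abs{K}$ and hence $\dt/h\to0$ in dimension $d\geqslant2$, or --- as the paper does --- the convergence of discrete space translates of $\phi^{(m)}$ (strong $L^r$ compactness) to absorb the differences $\phi^{n+1}_L-\phi^{n+1}_K$, or alternatively the discrete gradient bound (b) above. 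Any of these closes the estimate, but one of them must be said.
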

\begin{proof}
We adopt an approach similar to that in \cite[Lemma 4.1]{GHL22} and hence we skip most of the calculations except the ones related to the stabilisation terms and the Poisson equation. To begin with, we multiply the mass update \eqref{eq:dis_cons_mas} by $\psi^n_K\abs{K}$, where $\psi^n_K$ denotes the interpolant of a smooth and compactly supported test function $\psi$ at $(t^n,\uu{x}_K)$. To simplify the calculations, we choose a sufficiently large value for $m$ such that $(\psi^{(m)})^n_K = 0$, for all $K\in\M^{(m)}$ that has a face lying on the boundary. We obtain the following remainder term $R^{(m)}$ from the stabilisation terms in the mass update \eqref{eq:dis_cons_mas} upon summing over all $n=0,\dots,N^{(m)}-1$ and $K\in\M^{(m)}$:
\begin{equation}
R^{(m)}=\sum_{n=0}^{N^{(m)}-1}\dt^n\sum_{K\in\M}\diam(K)\sum_{\s\in\E(K)}\abs{\s}\abs{Q^{n+1}_\sigma}.
\end{equation}
Thanks to the uniform boundedness assumption \eqref{eq:dens_abs_bound}, we can estimate the above term to get
\begin{equation}
\label{eq:source_trunc}
R^{(m)}\leqslant C(\psi, \bar{C},\ubar{C})\sum_{n=0}^{N^{(m)}-1}\dt^n\sum_{K\in\M}\diam(K)\sum_{\s\in\E(K)}|\s|\dt^n\frac{\abs{\s}}{\abs{\Ds}}\abs{\phi^{n+1}_{L}-\phi^{n+1}_{K}},
\end{equation}
where $C(\psi, \bar{C}, \ubar{C})>0$ is a constant depending only on $\psi$, $\bar{C}$ and $\ubar{C}$. Using similar techniques as outlined in \cite[Lemma A.1]{GHL22} regarding the convergence of discrete space translates, we can establish that the right hand side of \eqref{eq:source_trunc} tends to $0$ as $m\to\infty$ under the uniform boundedness assumptions \eqref{eq:dens_abs_bound}, the strong convergence assumptions stated in (ii), and the regularity conditions \eqref{eq:CFL_meshpar} on the mesh parameters.

Let us now proceed to achieve the weak consistency of the momentum update \eqref{eq:dis_cons_mom}. We start multiplying \eqref{eq:dis_cons_mom} by $\psi^n_\s\abs{\Ds}$, where $\psi^n_\s$ denotes the interpolant of a compactly supported, smooth test function $\uu{\psi}$ at $(t^n,\uu{x}_\s)$. Under the given assumptions, the weak consistency of the discrete convection operator in the momentum update \eqref{eq:dis_cons_mom} follows using a method akin to the proof of \cite[Theorem 4.1]{HLN+23}. The residual terms appearing due to the stabilisation again converge to $0$ by a similar argument as in the case of the mass update. The consistency of the right hand side terms in the momentum update \eqref{eq:dis_cons_mom} is the only task left to complete, which can be achieved following a similar approach as in \cite[Theorem 5.2]{AGK24} once we obtain uniform bound for the sequence of discrete solutions $(\phi^{(m)})_{m\in\mbb{N}}$. Note that the uniform bound for $(\phi^{(m)})_{m\in\mbb{N}}$ can be derived using the uniform boundedness \eqref{eq:dens_abs_bound} of $(\rho^{(m)})_{m\in\mbb{N}}$, as shown in Theorem~\ref{thm:phi_bdd}.

Finally, the proof of the weak consistency of \eqref{eq:dis_poisson} mostly relies on the similar approach as in \cite[Theorem 5.2]{AGK24}, incorporating the additional information regarding the convergence of $(e^{\phi^{(m)}})_{m\in\mbb{N}}$ to $e^{\phi^\veps}$ in $L^r(Q_T)$ for $1\leqslant r<\infty$, which follows from the uniform boundedness and the strong convergence of $(\phi^{(m)})_{m\in\mbb{N}}$.
\end{proof}

\section{Quasineutral Limit of the Scheme}
\label{sec:quas_lim}
In this section, we study the AP property of the semi-implicit scheme \eqref{epb_dis}. We show that a solution of the semi-implicit scheme \eqref{epb_dis} converges, up to a subsequence, to a solution of a limiting semi-implicit scheme for the ICE system \eqref{eq:iso_euler} as $\veps\rightarrow 0$. To begin with, we assume throughout this section that $(\rho^\veps, \uu{u}^\veps, \phi^\veps)_{\veps>0}\in L^{\infty}(0,T;\Lm\times\uu{H}_{\E,0}(\Omega)\times\Lm)$ is a sequence of approximate solutions to the scheme \eqref{eq:dis_cons_mas}-\eqref{eq:dis_poisson} with respect to a well-prepared initial data as defined below. 
\begin{definition}
\label{def:well_prep_data}
    A sequence of initial data $(\rho^\veps_0, \uu{u}^\veps_0, \phi^\veps_0)_{\veps>0}$ is said to be well-prepared if $\rho^\veps_0 \in L^\infty(\Omega)$, $\rho^\veps_0 > 0 \ \mbox{a.e.}$, $\uu{u}^\veps_0 \in L^2(\Omega)^d$, $\phi^\veps_0 \in L^\infty(\Omega)$, and there exists a constant $C>0$, independent of $\veps$, and a positive function $\tilde{\rho}_0\in L^\infty(\Omega)$, such that
    \begin{enumerate}[label=(\roman*)]
        \item $\rho^\veps_0$ converges to $\tilde{\rho}_0$ in $L^2(\Omega)$ as $\veps\to0$,
        \item $\bigintssss_{\Omega}\big(\frac{\veps^2}{2} \abs{\bgrd\phi^\veps_0}^{2} + (\phi^\veps_0-1)e^{\phi^\veps_0} +\frac{1}{2} \rho^\veps_0 |\uu{u}^\veps_0|^2\big) \dd \uu{x}\leqslant C$ for all $\veps>0$.
    \end{enumerate}
\end{definition}
The subsequent theorem states that any solution of the semi-implicit scheme satisfies a discrete equivalent of the global energy estimate \eqref{eq:glob_eng}. The result follows by summing the inequality \eqref{eq:dis_tot_est} about the total energy over the time-steps. For the ease of writing, we skip the $\veps$ dependence on the functions, whenever there is no scope for confusion. 
\begin{theorem}[Global discrete energy inequality]
    Assume that the initial data $(\rho^\veps_0, \uu{u}^\veps_0, \phi^\veps_0)$ is well-prepared in the sense of Definition~\ref{def:well_prep_data}. Then there exists a solution ($\rho^n,\uu{u}^n, \phi^n)_{0\leqslant n\leqslant N}$ to the scheme \eqref{epb_dis}, satisfying the following inequality for all $0\leqslant n\leqslant N-1$: 
    \begin{equation}
        \label{eq:global_energy_ineq}
        \begin{aligned}
    &\sum_{i=1}^{d}\sum_{\s\in\E_{\intr}^{(i)}}\frac{1}{2}|\Ds|\rho_{\Ds}^{n+1}(u_{\sigma}^{n+1})^2 +\frac{\veps^2}{2}\sum_{i=1}^d\sum_{\s\in\E^{(i)}_{\intr}}\abs{\Ds}|(\partial^{(i)}_\E\phi^{n+1})_{\s}|^2\\
    &+\sum_{K\in\M}\abs{K}e^{\phi_K^{n+1}}(\phi_K^{n+1}-1)+\dt^2\sum_{k=0}^{n}\sum_{i=1}^{d}\sum_{\s\in\E_\intr^{(i)}}\abs{\Ds}\Big(\eta-\frac{(\rho_\s^k)^2}{\rho^{k+1}_{\Ds}}\Big)(\partial_\E^{(i)}\phi^{k+1})_\s^2 \leqslant C, 
    \end{aligned}
    \end{equation}
    where $C > 0$ is a constant independent of $\veps$.
\end{theorem}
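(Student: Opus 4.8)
The plan is to turn the single-step estimate \eqref{eq:dis_tot_est} into a telescoping inequality and then to dominate its right-hand side by the prepared initial energy. Existence of the iterates $(\rho^n,\uu{u}^n,\phi^n)_{0\leqslant n\leqslant N}$ is obtained by applying Theorem~\ref{thm:existence} inductively: the well-prepared datum yields $\rho^0>0$ (being cell averages of the a.e.\ positive $\rho_0^\veps$), and each step returns a strictly positive density, so the induction runs up to $n=N$. Throughout I abbreviate the discrete total energy at level $n$ by
\begin{equation*}
E^n:=\sum_{i=1}^{d}\sum_{\s\in\E_{\intr}^{(i)}}\frac{1}{2}\abs{\Ds}\rho_{\Ds}^{n}(u_{\s}^{n})^2+\frac{\veps^2}{2}\sum_{i=1}^d\sum_{\s\in\E^{(i)}_{\intr}}\abs{\Ds}\abs{(\partial_\E^{(i)}\phi^{n})_{\s}}^2+\sum_{K\in\M}\abs{K}e^{\phi_K^{n}}(\phi_K^{n}-1),
\end{equation*}
so that \eqref{eq:dis_tot_est} reads precisely $\tfrac{1}{\dt}(E^{n+1}-E^{n})\leqslant\mcal{A}+\mcal{R}+\mcal{S}$.

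Next I would multiply \eqref{eq:dis_tot_est} by $\dt$ and separate the dissipative part of $\mcal{R}$. Under the CFL restriction \eqref{eq:cfl} the sufficient conditions \eqref{eq:cfl1} and \eqref{eq:tstep_ke} hold, so $\mcal{A}\leqslant0$ and $\mcal{S}\leqslant0$ and may both be discarded; the choice $\eta\geqslant(\rho_\s^n)^2/\rho_{\Ds}^{n+1}$ makes $-\dt\mcal{R}$ non-negative. Moving this term to the left-hand side gives, for every $k$,
\begin{equation*}
E^{k+1}+\dt^2\sum_{i=1}^{d}\sum_{\s\in\E_\intr^{(i)}}\abs{\Ds}\Big(\eta-\frac{(\rho_\s^k)^2}{\rho^{k+1}_{\Ds}}\Big)(\partial_\E^{(i)}\phi^{k+1})_\s^2\leqslant E^{k}.
\end{equation*}
Summing over $k=0,\dots,n$, the energies telescope and leave
\begin{equation*}
E^{n+1}+\dt^2\sum_{k=0}^{n}\sum_{i=1}^{d}\sum_{\s\in\E_\intr^{(i)}}\abs{\Ds}\Big(\eta-\frac{(\rho_\s^k)^2}{\rho^{k+1}_{\Ds}}\Big)(\partial_\E^{(i)}\phi^{k+1})_\s^2\leqslant E^{0},
\end{equation*}
which is exactly \eqref{eq:global_energy_ineq} with $C$ replaced by the discrete initial energy $E^0$.

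It then remains to bound $E^0$ by a constant independent of $\veps$, and this is where Definition~\ref{def:well_prep_data} enters. Since the discrete data are the cell and dual-cell averages \eqref{eq:dis_ic} of $(\rho_0^\veps,\uu{u}_0^\veps,\phi_0^\veps)$, I would dominate each piece of $E^0$ by the continuous initial energy of assumption~(ii). The electric part is handled by the standard estimate that the discrete gradient of a cell-average projection is controlled by the continuous gradient, giving $\tfrac{\veps^2}{2}\sum_{i,\s}\abs{\Ds}\abs{(\partial_\E^{(i)}\phi^0)_\s}^2\leqslant\tfrac{\veps^2}{2}\int_\Omega\abs{\bgrd\phi_0^\veps}^2\,\dd\uu{x}$; the kinetic part is bounded by a multiple of $\tfrac12\int_\Omega\rho_0^\veps\abs{\uu{u}_0^\veps}^2\,\dd\uu{x}$ using Jensen's inequality and the boundedness of the prepared density. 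Both integrals are bounded uniformly in $\veps$ by~(ii), once the pointwise inequality $e^{z}(z-1)\geqslant-1$ is used to absorb the possibly negative potential contribution in~(ii).

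The \textbf{main obstacle} is the discrete potential energy $\sum_{K\in\M}\abs{K}e^{\phi_K^0}(\phi_K^0-1)$. Because $z\mapsto e^{z}(z-1)$ is convex only on $\{z\geqslant-1\}$ and not on all of $\mbb{R}$, Jensen does not directly bound this sum by $\int_\Omega e^{\phi_0^\veps}(\phi_0^\veps-1)\,\dd\uu{x}$. I would circumvent this with a uniform-in-$\veps$ $L^\infty$ bound on $\phi_0^\veps$: the comparison principle of Theorem~\ref{thm:phi_bdd} for the Poisson--Boltzmann relation \eqref{eq:r_poisson-boltz} confines $\phi_0^\veps$ to a fixed compact interval whenever $\rho_0^\veps$ is bounded above and below, so that $e^{\phi_K^0}(\phi_K^0-1)$ is bounded by a $\veps$-independent constant and hence $\sum_{K\in\M}\abs{K}e^{\phi_K^0}(\phi_K^0-1)\leqslant C\abs{\Omega}$. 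Collecting the three estimates gives $E^0\leqslant C$ uniformly in $\veps$, which completes the argument.
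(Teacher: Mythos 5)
Your core argument is exactly the paper's: the paper proves this theorem in a single sentence (``the result follows by summing the inequality \eqref{eq:dis_tot_est} about the total energy over the time-steps''), and your telescoping of $E^{k+1}-E^{k}$ --- discarding $\mcal{A}$ and $\mcal{S}$, which are non-positive under \eqref{eq:cfl1} and \eqref{eq:tstep_ke}, and moving the dissipative part of $\mcal{R}$ to the left --- is a faithful and correctly executed expansion of that sentence, reproducing in particular the $\dt^2$-weighted sum in \eqref{eq:global_energy_ineq}. The inductive use of Theorem~\ref{thm:existence} for existence and positivity of the iterates is likewise what the paper intends.

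Where you go beyond the paper is the bound $E^0\leqslant C$ uniformly in $\veps$: the paper silently identifies the discrete initial energy with the continuous quantity in Definition~\ref{def:well_prep_data}(ii), whereas you attempt to prove the transfer, and you correctly spot that the non-convexity of $z\mapsto e^z(z-1)$ blocks a naive Jensen argument. However, your fix rests on two hypotheses that Definition~\ref{def:well_prep_data} does not actually supply: (a) uniform-in-$\veps$ two-sided bounds $0<m\leqslant\rho_0^\veps\leqslant M$ (the definition gives only $\rho_0^\veps\in L^\infty(\Omega)$ for each $\veps$, positivity a.e., and $L^2$ convergence to a positive $\tilde\rho_0$, so the $L^\infty$ bounds could degenerate as $\veps\to0$), and (b) that $\phi_0^\veps$ solves \eqref{eq:poisson-boltz} with right-hand side $\rho_0^\veps$ --- this is stated in the remark following Proposition~\ref{prop:engy_balance}, but is not part of Definition~\ref{def:well_prep_data}. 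Note also that Theorem~\ref{thm:phi_bdd} is a statement about the \emph{discrete} Poisson--Boltzmann equation, so it cannot literally be applied to the continuous $\phi_0^\veps$; you need either its continuous analogue (which holds by the same comparison argument), or to redefine $\phi^0$ in \eqref{eq:dis_ic} as the solution of the discrete problem \eqref{eq:dis_poisson_comp} with datum $\rho^0$, to which Theorem~\ref{thm:phi_bdd} applies directly --- in either case cell averages of a function valued in $[\ln m,\ln M]$ remain in that interval, so your ``main obstacle'' closes. Your kinetic-energy estimate needs the same two-sided bound (a), since assumption (ii) alone does not control $\|\uu{u}_0^\veps\|_{L^2}$, and the discrete-gradient-of-projection estimate holds with a mesh-regularity constant rather than constant $1$, which is harmless here. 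With (a) and (b) made explicit --- they are consistent with the hypothesis \eqref{eq:den_bdd} the paper imposes immediately after this theorem --- your proof is complete; without them, neither your argument nor the paper's one-liner derives $E^0\leqslant C$ from Definition~\ref{def:well_prep_data} alone, so you should state them as additional assumptions rather than present them as consequences of well-preparedness.
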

\begin{remark}
    Using the elementary inequality $e^z(z-1)\geqslant -1$, for all real number $z$, from \eqref{eq:global_energy_ineq} we can further obtain the following estimate:
    \begin{equation}
    \begin{aligned}
    \label{eq:rm_global_energy_ineq}
    &\sum_{i=1}^{d}\sum_{\s\in\E_{\intr}^{(i)}}\frac{1}{2}|\Ds|\rho_{\Ds}^{n+1}(u_{\sigma}^{n+1})^2 +\frac{\veps^2}{2}\sum_{i=1}^d\sum_{\s\in\E^{(i)}_{\intr}}\abs{\Ds}|(\partial^{(i)}_\E\phi^{n+1})_{\s}|^2\\
    &+\dt^2\sum_{k=0}^{n}\sum_{i=1}^{d}\sum_{\s\in\E_\intr^{(i)}}\abs{\Ds}\Big(\eta-\frac{(\rho_\s^k)^2}{\rho^{k+1}_{\Ds}}\Big)(\partial_\E^{(i)}\phi^{k+1})_\s^2\leqslant C,
    \end{aligned}
    \end{equation}
    where $C$ depends on the initial datum and the domain. The above inequality will be used later to establish the quasineutral limit of the scheme \eqref{epb_dis}.
\end{remark}
The analysis carried out subsequently depends on the assumption that the sequence of approximate solutions for the density is uniformly bounded, i.e.\ 
\begin{equation}
    \label{eq:den_bdd}
    0<\ubar{C}<(\rho^{\veps})^n_K<\bar{C}, \quad \forall K\in\M \text{ and } 0\leqslant n\leqslant N,
\end{equation}
where the constants $\ubar{C}$ and $\bar{C}$ depend on the mesh and the time-step, but not on $\veps$. This hypothesis will play a crucial role in the following asymptotic convergence analysis as it will be used to control the velocity and the electrostatic potential.
\begin{lemma}[]
\label{lem:ql_bdd_den}
    There exists a constant $C$, independent of $\veps$, such that $\norm{\rho^\veps} _{L^{\infty}(0, T; L^{1}(\Omega))}\leqslant C$. Moreover, up to a subsequence, $\rho^\veps$ converges, in any discrete norm, to a positive function $\tilde{\rho}\in L^{\infty}(0,T;\Lm)$. 
\end{lemma}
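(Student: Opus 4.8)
The plan is to establish the two assertions by different routes: the uniform $L^\infty(0,T;L^1(\Omega))$ bound comes from discrete mass conservation, while the convergence statement is a finite-dimensional compactness argument that crucially exploits the fact that throughout this section the mesh $\M$ and the time-step $\dt$ are held \emph{fixed}, the limit being taken only in $\veps$.

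For the first claim, I would start from the mass update \eqref{eq:dis_cons_mas}, multiply by $\abs{K}$, and sum over all $K\in\M$. The interior fluxes cancel pairwise by conservativity of the mass flux, $F_{\s,K}^n=-F_{\s,L}^n$ for $\s=K|L$, while the boundary contributions vanish because both the velocity and the stabilisation term $\uu{Q}^{n+1}$ belong to $\Hez$ and hence are zero on external edges. This gives $\sum_{K}\abs{K}\rho_K^{n+1}=\sum_{K}\abs{K}\rho_K^n$, so by induction $\sum_{K}\abs{K}\rho_K^n=\sum_{K}\abs{K}\rho_K^0=\int_\Omega\rho_0^\veps\dd\uu{x}$ for every $n$. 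Invoking the positivity $\rho_K^n>0$ from Theorem~\ref{thm:existence}, the discrete $L^1$ norm at each time level equals the conserved initial mass $\norm{\rho_0^\veps}_{L^1(\Omega)}$, which is bounded independently of $\veps$ since the well-prepared data converge in $L^2(\Omega)$ and $\Omega$ is bounded, so that $\norm{\rho_0^\veps}_{L^1(\Omega)}\leqslant\abs{\Omega}^{1/2}\norm{\rho_0^\veps}_{L^2(\Omega)}\leqslant C$. Taking the supremum over $n$ yields the claimed bound; alternatively the bound is immediate from the pointwise estimate \eqref{eq:den_bdd}, but conservation gives the sharper, mesh-independent statement.

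For the convergence, I would observe that for the fixed discretisation a discrete solution is determined entirely by the finite family of reals $\{(\rho^\veps)_K^n:K\in\M,\ 0\leqslant n\leqslant N\}$, so the sequence lives in the finite-dimensional space $\Lm^{N+1}$. The uniform bound \eqref{eq:den_bdd} confines each coordinate to the $\veps$-independent compact interval $[\ubar{C},\bar{C}]$, hence the whole family lies in a fixed compact box. Bolzano--Weierstrass then produces a subsequence $\veps_k\to0$ along which every coordinate converges, $(\rho^{\veps_k})_K^n\to\tilde{\rho}_K^n$, and I would assemble these limits into a discrete field $\tilde\rho$ that is piecewise constant in space and time. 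Since $\ubar{C}\leqslant\tilde{\rho}_K^n\leqslant\bar{C}$, the limit is strictly positive and belongs to $L^\infty(0,T;\Lm)$. Finally, because all norms on the finite-dimensional space $\Lm^{N+1}$ are equivalent, coordinatewise convergence coincides with convergence in \emph{any} discrete norm, which is exactly the assertion.

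The computations here are routine; the only genuine point requiring care is conceptual, namely to recognise that freezing $\M$ and $\dt$ reduces the $\veps\to0$ limit to compactness in a fixed finite-dimensional space, which is what legitimises both the subsequence extraction and the equivalence of discrete norms. A secondary but essential remark is that the strict lower bound $\ubar{C}>0$ in \eqref{eq:den_bdd} is precisely what lets $\tilde\rho$ inherit strict positivity rather than mere non-negativity.
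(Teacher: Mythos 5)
Your proposal is correct and follows essentially the same route as the paper: discrete mass conservation (summing the mass update and using conservativity of the flux) plus positivity gives the $\veps$-uniform $L^\infty(0,T;L^1(\Omega))$ bound, and the convergence is a finite-dimensional compactness argument on the fixed mesh using the uniform bound \eqref{eq:den_bdd}. Your version merely spells out the details the paper leaves implicit (boundary flux vanishing, the $\veps$-uniformity of the initial mass via well-preparedness, and Bolzano--Weierstrass in place of the paper's brief distributional-limit step), which is a presentational rather than substantive difference.
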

\begin{proof}
    Summing \eqref{eq:dis_cons_mas} over $K\in\M$ and using the conservativity of the mass flux, we obtain
    \begin{equation*}
        \sum_{K\in\M}\abs{K}(\rho^\veps)^{n+1}_K=\sum_{K\in\M}\abs{K}(\rho^\veps)^{0}_K \quad \forall \, 0\leqslant n\leqslant N-1.
    \end{equation*}
     Using the positivity of the density we obtain that $\norm{\rho^\veps}_{L^{\infty}(0, T; L^{1}(\Omega))}\leqslant C$. Consequently, a subsequence of $\rho^\veps$ converges in the sense of distributions to some $\tilde{\rho}\in \Lm$. A finite dimensional argument along with the hypothesis \eqref{eq:den_bdd} then easily shows that $\rho^\veps$ converges, in any discrete norm, to a positive function $\tilde{\rho}\in L^{\infty}(0,T;\Lm)$.
\end{proof}
\begin{corollary}[Control of the velocity]
\label{cor:bdd_vel}
     The sequence of approximate solutions for the velocity component $(\uu{u}^\veps)_{\veps>0} \subset L^{\infty}(0, T; \uu{H}_{\E,0}(\Omega))$ is uniformly bounded with respect to $\veps$, i.e.\ $\norm{\uu{u}^\veps}_{L^{\infty}(0,T; L^2(\Omega)^d)}\leqslant C$, where $C$ is a constant independent of $\veps$.
\end{corollary}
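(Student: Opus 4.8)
The plan is to read off the velocity bound directly from the global discrete energy inequality proved just above. The first term on the left-hand side of \eqref{eq:global_energy_ineq} (equivalently \eqref{eq:rm_global_energy_ineq}) is exactly the discrete kinetic energy $\sum_{i=1}^{d}\sum_{\s\in\E_\intr^{(i)}}\half|\Ds|\rho_{\Ds}^{n+1}(u_\s^{n+1})^2$. All the remaining terms on that side are non-negative: the stabilisation term is non-negative because $\eta\geqslant(\rho_\s^k)^2/\rho^{k+1}_{\Ds}$, while the potential contribution is controlled below using $e^z(z-1)\geqslant-1$ (this is where \eqref{eq:rm_global_energy_ineq} is convenient). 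Consequently the inequality already delivers $\sum_{i,\s}\half|\Ds|\rho_{\Ds}^{n+1}(u_\s^{n+1})^2\leqslant C$ with $C$ independent of $\veps$.

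The only discrepancy between this weighted kinetic energy and the unweighted discrete $L^2$ norm $\norm{\uu{u}^\veps(t^n,\cdot)}_{L^2(\Omega)^d}^2=\sum_{i,\s}|\Ds|(u_\s^{n})^2$ is the density weight $\rho_{\Ds}^{n+1}$, so I would close the gap by producing a uniform-in-$\veps$ lower bound on the dual density. Recalling the dual averaging relation \eqref{eq:mass_dual}, namely $\abs{\Ds}\rho_{\Ds}^{n+1}=\abs{D_{\s,K}}\rho_K^{n+1}+\abs{D_{\s,L}}\rho_L^{n+1}$ together with $\abs{\Ds}=\abs{D_{\s,K}}+\abs{D_{\s,L}}$, the dual density $\rho_{\Ds}^{n+1}$ is a convex combination of the neighbouring primal densities. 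Invoking the uniform density hypothesis \eqref{eq:den_bdd}, each primal density exceeds $\ubar{C}$, and hence $\rho_{\Ds}^{n+1}>\ubar{C}$ for every internal face $\s$ and every time level.

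Combining the two observations yields, for each admissible $n$,
\begin{equation*}
\frac{\ubar{C}}{2}\sum_{i=1}^{d}\sum_{\s\in\E_\intr^{(i)}}|\Ds|(u_\s^{n})^2\leqslant\sum_{i=1}^{d}\sum_{\s\in\E_\intr^{(i)}}\half|\Ds|\rho_{\Ds}^{n}(u_\s^{n})^2\leqslant C,
\end{equation*}
so that $\norm{\uu{u}^\veps(t^n,\cdot)}_{L^2(\Omega)^d}^2\leqslant 2C/\ubar{C}$; taking the supremum over $n$ gives the claimed bound. I do not expect any genuine obstacle here—the corollary is an immediate consequence of the energy inequality and the density lower bound in \eqref{eq:den_bdd}. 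The sole point requiring mild care is the index shift: the energy inequality controls $\rho_{\Ds}^{n+1}(u_\s^{n+1})^2$, so one applies it at level $n-1$ to bound the velocity at level $n$ for $1\leqslant n\leqslant N$, while the velocity at the initial level $n=0$ is bounded directly by the uniform bound on $\tfrac12\rho_0^\veps|\uu{u}_0^\veps|^2$ furnished by the well-prepared data assumption in Definition~\ref{def:well_prep_data}.
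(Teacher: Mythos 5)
Your proof is correct and takes essentially the same route as the paper: both extract the discrete kinetic energy from the global energy inequality \eqref{eq:rm_global_energy_ineq}, discard the remaining non-negative (gradient and stabilisation) terms, and invoke the uniform density hypothesis \eqref{eq:den_bdd} to remove the weight $\rho_{\Ds}^{n+1}$. The paper's proof is simply terser; the details you supply (the convex-combination argument via \eqref{eq:mass_dual} for the dual density lower bound and the index shift with the well-prepared initial data at $n=0$) are precisely the steps it leaves implicit.
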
 
\begin{proof}
The required bound on the velocity follows from \eqref{eq:rm_global_energy_ineq} after using the uniform boundedness hypothesis \eqref{eq:den_bdd} and noting that the second and third terms are positive.  
\end{proof}
\begin{corollary}
\label{cor:ql_conv_phi}
    The sequence of approximate solutions for the electric potential $(\phi^\veps)_{\veps>0} \subset L^{\infty}(0, T; L_\M(\Omega))$ converges to $\ln\tilde{\rho}\in L^{\infty}(0, T; L_\M(\Omega))$.
\end{corollary}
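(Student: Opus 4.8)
The plan is to test the discrete Poisson equation \eqref{eq:dis_poisson} against the natural deviation $w^{n+1}:=\phi^{n+1}-\ln\rho^{n+1}\in\Lm$, which measures the distance of $\phi^{n+1}$ from the conjectured limit $\ln\tilde\rho$. Writing $e^{\phi^{n+1}_K}=\rho^{n+1}_K e^{w^{n+1}_K}$, I would multiply \eqref{eq:dis_poisson} by $\abs{K}w^{n+1}_K$, sum over $K\in\M$, and invoke the discrete gradient--divergence duality \eqref{eq:dis_dual} (legitimate since $\grd_\E\phi^{n+1}$ vanishes on the external edges by construction of the discrete gradient and the homogeneous Neumann condition). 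The second-order term then becomes $\veps^2\int_\Omega\grd_\E w^{n+1}\cdot\grd_\E\phi^{n+1}\,\dd\uu{x}$, while the right-hand side becomes $\sum_{K\in\M}\abs{K}\rho^{n+1}_K(1-e^{w^{n+1}_K})w^{n+1}_K$.

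Since $\grd_\E w^{n+1}=\grd_\E\phi^{n+1}-\grd_\E\ln\rho^{n+1}$, expanding this identity and estimating the cross term by Cauchy--Schwarz and Young's inequality yields
\begin{equation}
\label{eq:ql_phi_key}
\frac{\veps^2}{2}\norm{\grd_\E\phi^{n+1}}_{L^2(\Omega)}^2+\sum_{K\in\M}\abs{K}\rho^{n+1}_K\big(e^{w^{n+1}_K}-1\big)w^{n+1}_K\leqslant\frac{\veps^2}{2}\norm{\grd_\E\ln\rho^{n+1}}_{L^2(\Omega)}^2.
\end{equation}
Both terms on the left of \eqref{eq:ql_phi_key} are nonnegative: the first trivially, and the second because $(e^z-1)z\geqslant 0$ for every $z\in\mbb{R}$, with equality only at $z=0$.

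It then remains to show that the right-hand side of \eqref{eq:ql_phi_key} vanishes as $\veps\to0$. Here the mesh and the time-step are held fixed, so $\ln\rho^{n+1}$ is a grid function on a fixed, finite mesh; by the uniform density bound \eqref{eq:den_bdd}, any increment satisfies $\abs{\ln\rho^{n+1}_L-\ln\rho^{n+1}_K}\leqslant\ln(\bar C/\ubar C)$, whence $\norm{\grd_\E\ln\rho^{n+1}}_{L^2(\Omega)}^2\leqslant C$ with $C$ independent of $\veps$. Consequently the right-hand side of \eqref{eq:ql_phi_key} is $O(\veps^2)$ and tends to $0$. Since $\rho^{n+1}_K\geqslant\ubar C>0$ and every summand of the second left-hand term is nonnegative, each summand converges to $0$; as $z\mapsto(e^z-1)z$ is continuous and vanishes only at $z=0$, this forces $w^{n+1}_K\to0$ for every $K\in\M$, i.e.\ $\phi^{n+1}-\ln\rho^{n+1}\to0$ as $\veps\to0$ in the fixed, finite-dimensional space $\Lm$.

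Finally, by Lemma~\ref{lem:ql_bdd_den} the density $\rho^{n+1}$ converges to a positive limit $\tilde\rho^{n+1}$ with values in $[\ubar C,\bar C]$, on which $\ln$ is continuous, so $\ln\rho^{n+1}\to\ln\tilde\rho^{n+1}$; combined with the preceding step this gives $\phi^{n+1}\to\ln\tilde\rho^{n+1}$. Because there are only finitely many time levels, the convergence is uniform in $n$, yielding $\phi^\veps\to\ln\tilde\rho$ in $L^\infty(0,T;\Lm)$. The main obstacle is ensuring that every estimate is uniform in $\veps$: the control of $\norm{\grd_\E\ln\rho^{n+1}}_{L^2}$ and the coercivity of $z\mapsto(e^z-1)z$ on a compact set both hinge on the $\veps$-independent density bounds \eqref{eq:den_bdd}, and the boundedness of $\phi^{n+1}$ itself rests on the comparison principle of Theorem~\ref{thm:phi_bdd}.
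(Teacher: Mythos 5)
Your proof is correct, and it takes a genuinely different route from the paper's. The paper's own proof is a direct passage to the limit in the discrete Poisson equation \eqref{eq:dis_poisson}: on the fixed mesh, the $\veps$-uniform density bounds \eqref{eq:den_bdd} and the comparison principle (Theorem~\ref{thm:phi_bdd}) make $\phi^{n+1}$, hence $(\Delta_\M\phi^{n+1})_K$, bounded uniformly in $\veps$, so that $\veps^2(\Delta_\M\phi^{n+1})_K\to0$; then $e^{(\phi^\veps)^{n+1}_K}\to\tilde\rho^{n+1}_K$ by Lemma~\ref{lem:ql_bdd_den}, and continuity of the logarithm gives the claim. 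You instead run a variational/monotonicity argument: you test \eqref{eq:dis_poisson} against the deviation $w^{n+1}=\phi^{n+1}-\ln\rho^{n+1}$, apply the duality \eqref{eq:dis_dual} (legitimate, since $\grd_{\E}\phi^{n+1}$ vanishes on the external edges), and exploit that $(e^z-1)z\geqslant 0$ with equality only at $z=0$. Your key inequality is correctly derived, and the passage from $(e^{w^{n+1}_K}-1)w^{n+1}_K\to0$ to $w^{n+1}_K\to0$ is sound precisely because Theorem~\ref{thm:phi_bdd} together with \eqref{eq:den_bdd} confines $w^{n+1}_K$ to a compact set, on which the continuous function $z\mapsto(e^z-1)z$ vanishes only at the origin. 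What your route buys is quantitative information: since $(e^z-1)z\geqslant c_M z^2$ on any bounded interval $[-M,M]$, your inequality in fact yields the rate $\norm{\phi^{n+1}-\ln\rho^{n+1}}_{L^2(\Omega)}=O(\veps)$ on the fixed mesh, which the paper's purely qualitative limit argument does not provide. The price is a longer argument; note that both proofs ultimately hinge on the same two $\veps$-uniform ingredients, namely the density hypothesis \eqref{eq:den_bdd} and the comparison principle of the Appendix.
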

\begin{proof}
    The discrete Poisson equation \eqref{eq:dis_poisson} and the convergence of the density from Lemma~\ref{lem:ql_bdd_den} yields the convergence of $e^{\phi^\veps}$ to $\tilde{\rho}$, and as an immediate consequence we note that $(\phi^{\veps})^n_K$ converges to $\ln\tilde{\rho}^n_K$ for all $K\in\M$.    
\end{proof}

\begin{lemma}
\label{lem:ql_bdd_grdPhi}
    Assume that the initial data $(\rho^\veps_0,\uu{u}^\veps_0, \phi^\veps_0)_{\veps>0}$ is well-prepared in the sense of Definition~\ref{def:well_prep_data}. Then one has, for $0 \leqslant n \leqslant N-1$,
    \begin{equation}
    \label{eq:ql_bdd_grdPhi}
        \sum_{i=1}^{d}\sum_{\s\in\E_\intr^{(i)}}\abs{\Ds}(\partial_\E^{(i)}(\phi^\veps)^{n+1})_\s^2 \leqslant C,
    \end{equation}
    where $C > 0$ is a constant independent of $\veps$. Furthermore, for each $n = 1, 2, \dots , N-1$, there exists $\uu{\theta}^{n, n+1} \in \uu{H}_{\E,0}(\Omega)$ such that $\{(\rho^\veps)^n(\bgrd_\E(\phi^\veps)^{n+1})\}_{\veps>0}$ weakly converges to $\uu{\theta}^{n, n+1}$ in $L^2(\Omega)^d$ with
     \begin{equation}
     \label{eq:theta_expr}
         \theta^{n, n+1}_\s=\tilde{\rho}^n_\s(\partial^{(i)}_{\E}\ln\tilde{\rho}^{n+1})_\s
     \end{equation}
     for all $\s\in \E^{(i)}_\intr$, $i = 1, 2, \dots , d$.
\end{lemma}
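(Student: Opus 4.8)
The plan is to treat the two assertions in turn, first peeling off the uniform gradient bound \eqref{eq:ql_bdd_grdPhi} from the global energy inequality \eqref{eq:rm_global_energy_ineq}, and then identifying the weak limit by exploiting the strong convergences already established in Lemma~\ref{lem:ql_bdd_den} and Corollary~\ref{cor:ql_conv_phi}. For the bound, I would begin from \eqref{eq:rm_global_energy_ineq} and note that every term on its left-hand side is non-negative --- the first two trivially, and the third because the stability choice of $\eta$ forces $\eta - \frac{(\rho_\s^k)^2}{\rho^{k+1}_{\Ds}} \geqslant 0$. Discarding all summands of the third term except $k = n$, I would then bring in the $\veps$-uniform density bounds \eqref{eq:den_bdd}: since each of $\rho_\s^n$ and $\rho_{\Ds}^{n+1}$ lies in $[\ubar{C}, \bar{C}]$, one has $\frac{(\rho_\s^n)^2}{\rho^{n+1}_{\Ds}} \leqslant \bar{C}^2/\ubar{C}$, and choosing $\eta = \frac{5\bar{C}^2}{4\ubar{C}}$ --- a legitimate $\veps$-independent value, since it dominates the stability threshold --- gives $\eta - \frac{(\rho_\s^n)^2}{\rho^{n+1}_{\Ds}} \geqslant \bar{C}^2/(4\ubar{C}) =: c_0 > 0$. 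Dividing through by $c_0\,\dt^2$, with $\dt$ held fixed throughout this section, produces \eqref{eq:ql_bdd_grdPhi} with a constant that depends on the mesh and time-step but not on $\veps$.

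For the second assertion, the estimate \eqref{eq:ql_bdd_grdPhi} shows that $\{\bgrd_\E(\phi^\veps)^{n+1}\}_{\veps>0}$ is bounded, uniformly in $\veps$, in the finite-dimensional space $\uu{H}_{\E,0}(\Omega)$; together with \eqref{eq:den_bdd} this makes $\{(\rho^\veps)^n\bgrd_\E(\phi^\veps)^{n+1}\}_{\veps>0}$ bounded in $L^2(\Omega)^d$, so a weak limit $\uu{\theta}^{n,n+1}$ exists. To pin it down I would argue interface by interface. Corollary~\ref{cor:ql_conv_phi} gives $(\phi^\veps)^{n+1}_K \to \ln\tilde{\rho}^{n+1}_K$ for every $K\in\M$, so the finite-difference expression \eqref{eq:dis_grad} yields $(\partial^{(i)}_\E(\phi^\veps)^{n+1})_\s \to (\partial^{(i)}_\E\ln\tilde{\rho}^{n+1})_\s$; while the continuity of the interface map $(\rho_K,\rho_L)\mapsto\rho_{KL}$ from Lemma~\ref{lem:rho_sig}, combined with $\rho^\veps\to\tilde{\rho}$ from Lemma~\ref{lem:ql_bdd_den}, gives $(\rho^\veps)^n_\s\to\tilde{\rho}^n_\s$. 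Passing to the limit in the scalar product on each dual cell then delivers $(\rho^\veps)^n_\s(\partial^{(i)}_\E(\phi^\veps)^{n+1})_\s \to \tilde{\rho}^n_\s(\partial^{(i)}_\E\ln\tilde{\rho}^{n+1})_\s$, which is precisely \eqref{eq:theta_expr}; because the mesh is fixed, this per-interface convergence is strong in $L^2$ and a fortiori identifies the weak limit $\uu{\theta}^{n,n+1}$.

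I expect the one genuinely delicate point to be securing the $\veps$-independent positive lower bound $c_0$ on $\eta - \frac{(\rho_\s^n)^2}{\rho^{n+1}_{\Ds}}$: this is exactly where the hypothesis \eqref{eq:den_bdd} is indispensable, and where one must verify that the stabilisation parameter $\eta$, although selected interface-wise in the scheme, may be fixed uniformly in $\veps$ while still meeting the stability requirement. Once that lower bound is in hand, the rest is essentially finite-dimensional bookkeeping: the limit identification amounts to passing to the limit in finitely many products of convergent scalar sequences, so no compactness beyond the elementary bound \eqref{eq:ql_bdd_grdPhi} is required.
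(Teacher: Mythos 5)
Your proof is correct and follows essentially the same route as the paper: the gradient bound is extracted from the global energy inequality \eqref{eq:rm_global_energy_ineq} together with the uniform density bounds \eqref{eq:den_bdd}, and the weak limit is identified through the convergences supplied by Lemma~\ref{lem:ql_bdd_den} and Corollary~\ref{cor:ql_conv_phi}. The only differences are presentational: you make explicit two points the paper glosses over (the $\veps$-independent choice of $\eta$ giving a uniform positive lower bound on $\eta - (\rho_\s^n)^2/\rho_{\Ds}^{n+1}$, and the continuity of the interface map $(\rho_K,\rho_L)\mapsto\rho_{KL}$ from Lemma~\ref{lem:rho_sig} needed to pass from cell-wise to interface-wise convergence of the density), while the paper identifies the limit via a triangle-inequality/Cauchy--Schwarz estimate rather than your per-interface product argument, which on a fixed mesh amounts to the same finite-dimensional reasoning.
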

\begin{proof}
    From the energy estimate \eqref{eq:rm_global_energy_ineq}, we deduce that for each $0 \leqslant n \leqslant N-1$, 
    \begin{equation*}
        \sum_{i=1}^{d}\sum_{\s\in\E_\intr^{(i)}}\abs{\Ds}\Big(\eta-\frac{\abs{(\rho^\veps)_\s^n}^2}{(\rho^\veps)^{n+1}_{\Ds}}\Big)(\partial_\E^{(i)}(\phi^\veps)^{n+1})_\s^2 \leqslant C,
    \end{equation*}
    where $C$ depends on the mesh and the time-step, but not on $\veps$. The uniform boundedness hypothesis \eqref{eq:den_bdd} now readily gives the estimate \eqref{eq:ql_bdd_grdPhi}.

    Next, we turn to the second part of the lemma. The hypothesis \eqref{eq:den_bdd} and the boundedness of the potential gradient $\left\{ (\bgrd_\E (\phi^\veps)^{n+1})\right\}_{\veps > 0} \subset \uu{H}_{\E,0}(\Omega)$ obtained as above implies that $\left\{(\rho^\veps)^n \bgrd_\E (\phi^\veps)^{n+1}\right\}_{\veps>0}\subset\uu{H}_{\E,0}(\Omega)$ is uniformly bounded in \(L^2(\Omega)^d\) for each $n = 1, \dots, N-1$. Therefore, it has a weak limit $\uu{\theta}^{n, n+1} \in L^2(\Omega)^d$ as $\veps \to 0$. Since $\uu{H}_{\E,0}(\Omega)$ is a finite-dimensional subspace of $L^2(\Omega)^d$, it is weakly closed in $L^2(\Omega)^d$. Consequently, $\uu{\theta}^{n, n+1} \in \uu{H}_{\E,0}(\Omega)$ for each $n = 1, \dots, N-1$.

    Now, using \eqref{eq:ql_bdd_grdPhi}, we have
    \begin{align*}        &\sum_{i=1}^{d}\sum_{\s\in\E_\intr^{(i)}}\abs{\Ds}\abs{(\rho^\veps)^n_\s(\partial_\E^{(i)}(\phi^\veps)^{n+1})_\s-\tilde{\rho}^n_\s(\partial^{(i)}_{\E}\ln\tilde{\rho}^{n+1})_\s}\\
    &\leqslant C\Big(\sum_{i=1}^{d}\sum_{\s\in\E_\intr^{(i)}}\abs{\Ds}\abs{(\rho^\veps)^n_\s-\tilde{\rho}^n_\s}^2\Big)^{\frac{1}{2}}+C(\tilde{\rho}^n)\Big(\sum_{i=1}^{d}\sum_{\s\in\E_\intr^{(i)}}\abs{\Ds}\abs{(\partial_\E^{(i)}(\phi^\veps)^{n+1})_\s-(\partial^{(i)}_{\E}\ln\tilde{\rho}^{n+1})_\s}^2\Big)^{\frac{1}{2}}.
    \end{align*}

    Taking $\veps\to 0$ and using the convergence of the density $\rho^\veps$ from Lemma~\ref{lem:ql_bdd_den} and the potential $\phi^\veps$ from Lemma~\ref{cor:ql_conv_phi}, we obtain
    \begin{equation*}
    \sum_{i=1}^{d}\sum_{\s\in\E_\intr^{(i)}}\abs{\Ds}\abs{\theta^{n, n+1}_\s-\tilde{\rho}^n_\s(\partial^{(i)}_{\E}\ln\tilde{\rho}^{n+1})_\s}=0,
    \end{equation*}
    which yields the desired expression \eqref{eq:theta_expr}.
\end{proof}
Now, we are in a position to state the main theorem of this section, based on the boundedness and convergence results obtained above. 
\begin{theorem}
\label{thm:qn-limit_dis}
Let $\left(\veps^{(m)}\right)_{m \in \mathbb{N}}$ be a sequence of positive real numbers tending to zero, and let $\left(\rho^{(m)}, \uu{u}^{(m)}, \phi^{(m)}\right)$ be a corresponding sequence of solutions of the scheme \eqref{epb_dis}. Let us assume that the initial data $\left(\rho^{\veps^{(m)}}_0, \uu{u}^{\veps^{(m)}}_0, \phi^{\veps^{(m)}}_0\right)_{\veps^{(m)}}$ is well-prepared in the sense of Definition~\ref{def:well_prep_data}. Then the sequence $\left(\rho^{(m)}, \uu{u}^{(m)}, \phi^{(m)}\right)$ tends, in any discrete norm, to the function $(\tilde{\rho}, \uu{U}, \ln\tilde{\rho})\in L^{\infty}(0,T;\Lm\times\uu{H}_{\E,0}(\Omega)\times\Lm)$ when $m\to\infty$, and $(\tilde{\rho}, \uu{U})$ satisfies the following semi-implicit scheme for all $0\leqslant n \leqslant N-1$:
\begin{subequations}
\label{eq:isoth_dis}
\begin{equation}
\label{eq:isoth_mass_dis}
    \frac{1}{\dt}\big(\tilde\rho_K^{n+1}-\tilde{\rho}_K^{n}\big)+\frac{1}{\left|K\right|}\sum_{\s\in\Ek}F^{n,n+1}_{\s,K}=0,\; \forall K \in \mcal{M},
\end{equation}
\begin{align}
\label{eq:isoth_mom_dis}
    \frac{1}{\dt}\big(\tilde{\rho}_{\Ds}^{n+1}U_\s^{n+1}-\tilde{\rho}_{\Ds}^n U_\s^{n}\big)+\frac{1}{\left|\Ds\right|}\sum_{\epsilon\in\tilde{\E}(\Ds)}F^{n,n+1}_{\epsilon,\sigma}U_{\epsilon,\mathrm{up}}^{n}+(\partial^{(i)}_{\E}\tilde{\rho}^{n+1})^{*}_{\sigma} = 0, \ 1\leqslant i\leqslant d, \ \forall \s\in\E_\intr^{(i)},
\end{align}
\end{subequations}
where 
\begin{align}
   F^{n,n+1}_{\s, K}&=\abs{\s}\big(\tilde{\rho}^n_\s U^n_{\s, K}-\eta\dt(\partial_{\E}^{(i)}\ln\tilde{\rho}^{n+1})_{\s,K}\big),\\
(\partial^{(i)}_{\E}\tilde{\rho}^{n+1})^{*}_{\sigma}&=(\partial^{(i)}_{\E}\tilde{\rho}^{n+1})_{\sigma}-(\tilde{\rho}_\s^{n+1}-\tilde{\rho}_\s^n)(\partial^{(i)}_{\E}\ln\tilde{\rho}^{n+1})_{\sigma}-\dt(\partial^{(i)}_{\E}(\divm U^n))_{\s}, \label{eq:rho_grad*}
\end{align}
with $\eta>\frac{5(\tilde{\rho}_\s^n)^2}{4\tilde{\rho}_{\Ds}^n}$, $\tilde{\rho}_\s=\tilde{\rho}_{KL}$, $\s=K|L$, as given by Lemma~\ref{lem:rho_sig}.
\end{theorem}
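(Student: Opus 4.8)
The plan is to take advantage of the fact that here the mesh $\mcal{T}$ and the time-step $\dt$ are held fixed while $\veps^{(m)}\to 0$, so that all the discrete function spaces are finite-dimensional and every discrete operator ($\gre$, $\divm$, the dual averaging \eqref{eq:mass_dual}) is a fixed linear, hence continuous, map. In this setting convergence ``in any discrete norm'' is just convergence of the finitely many degrees of freedom, and the whole proof reduces to passing to the limit term-by-term in the three discrete relations \eqref{eq:dis_cons_mas}--\eqref{eq:dis_poisson} using continuity together with the convergences already established. First I would collect those convergences: $\rho^{(m)}\to\tilde{\rho}$ with $\tilde{\rho}>0$ by Lemma~\ref{lem:ql_bdd_den}; the uniform bound of Corollary~\ref{cor:bdd_vel} together with finite-dimensionality gives, up to a subsequence, $\uu{u}^{(m)}\to\uu{U}\in\Hez$; and Corollary~\ref{cor:ql_conv_phi} yields $\phi^{(m)}\to\ln\tilde{\rho}$, whence $\gre\phi^{n+1}\to\gre\ln\tilde{\rho}^{n+1}$ by linearity of $\gre$.

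Next I would pass to the limit in the mass update \eqref{eq:dis_cons_mas}. The interface density $\rho_\s=\rho_{KL}$ depends continuously on $(\rho_K,\rho_L)$ through the defining relation of Lemma~\ref{lem:rho_sig}, so $\rho_\s^{n}\to\tilde{\rho}_\s^{n}$ with $\tilde{\rho}_\s=\tilde{\rho}_{KL}$, while the stabilisation term $Q_\s^{n+1}=\eta\dt(\partial^{(i)}_\E\phi^{n+1})_\s$ from \eqref{eq:dis_Q} converges to $\eta\dt(\partial^{(i)}_\E\ln\tilde{\rho}^{n+1})_\s$. Hence the mass flux \eqref{eq:stabterms} converges to $F^{n,n+1}_{\s,K}$ and \eqref{eq:dis_cons_mas} passes to \eqref{eq:isoth_mass_dis}. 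In the momentum update \eqref{eq:dis_cons_mom}, the discrete time-derivative term converges by continuity of the dual averaging \eqref{eq:mass_dual}, and the convective term $\sum_\epsilon F^n_{\epsilon,\s}u^n_{\epsilon,\mathrm{up}}$ converges to $\sum_\epsilon F^{n,n+1}_{\epsilon,\s}U^n_{\epsilon,\mathrm{up}}$: where the limiting dual flux is nonzero the upwind value stabilises, and where it vanishes the whole product is dominated by $\abs{F^n_{\epsilon,\s}}\,C\to 0$, so both regimes are consistent with the limit term.

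The heart of the proof is the right-hand side of \eqref{eq:dis_cons_mom}. For the forcing term I would invoke Lemma~\ref{lem:ql_bdd_grdPhi}, which gives $\rho_\s^n(\partial^{(i)}_\E\phi^{n+1})_\s\to\tilde{\rho}_\s^n(\partial^{(i)}_\E\ln\tilde{\rho}^{n+1})_\s$ via \eqref{eq:theta_expr}; for the source, since $\Lambda_K^n=\dt(\divm\uu{u}^n)_K$ by \eqref{eq:dis_Lambda}, continuity of $\gre$ and $\divm$ yields $(\partial^{(i)}_\E\Lambda^n)_\s\to\dt(\partial^{(i)}_\E(\divm\uu{U}^n))_\s$. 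Thus the limiting right-hand side is $-\tilde{\rho}_\s^n(\partial^{(i)}_\E\ln\tilde{\rho}^{n+1})_\s+\dt(\partial^{(i)}_\E(\divm\uu{U}^n))_\s$, and it remains to identify this with $-(\partial^{(i)}_\E\tilde{\rho}^{n+1})^{*}_\s$ of \eqref{eq:rho_grad*}. The key algebraic identity, which I expect to be the main obstacle and the genuine reason for the particular interface density of Lemma~\ref{lem:rho_sig}, is the discrete chain rule
\begin{equation*}
(\partial^{(i)}_\E\tilde{\rho}^{n+1})_\s=\tilde{\rho}_\s^{n+1}\,(\partial^{(i)}_\E\ln\tilde{\rho}^{n+1})_\s,
\end{equation*}
which follows at once from the defining relation \eqref{eq:rho_sig_choice}, namely $\tilde{\rho}_K-\tilde{\rho}_L=\tilde{\rho}_{KL}(\ln\tilde{\rho}_K-\ln\tilde{\rho}_L)$, applied inside the definition \eqref{eq:dis_grad} of the discrete gradient.

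Using this identity to write $\tilde{\rho}_\s^n(\partial^{(i)}_\E\ln\tilde{\rho}^{n+1})_\s=(\partial^{(i)}_\E\tilde{\rho}^{n+1})_\s-(\tilde{\rho}_\s^{n+1}-\tilde{\rho}_\s^n)(\partial^{(i)}_\E\ln\tilde{\rho}^{n+1})_\s$ converts the limiting right-hand side exactly into $-(\partial^{(i)}_\E\tilde{\rho}^{n+1})^{*}_\s$ as given in \eqref{eq:rho_grad*}, so that \eqref{eq:dis_cons_mom} passes to \eqref{eq:isoth_mom_dis}; the Poisson relation \eqref{eq:dis_poisson} collapses to $\phi=\ln\tilde{\rho}$, already recorded in Corollary~\ref{cor:ql_conv_phi}. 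Finally, since the limit system \eqref{eq:isoth_dis} together with the well-prepared initial data determines $(\tilde{\rho},\uu{U})$ uniquely, the standard subsequence argument upgrades the convergence to the full sequence. The only points needing care beyond routine continuity are the justification that $\rho_{KL}$ is continuous in its arguments (which the uniqueness in Lemma~\ref{lem:rho_sig} supplies) and the stability of the upwinding in the convective flux across sign changes of the limiting dual mass flux.
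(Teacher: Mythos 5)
Your proposal is correct and takes essentially the same route as the paper: the paper's proof consists of citing Lemma~\ref{lem:ql_bdd_den}, Corollary~\ref{cor:bdd_vel}, Corollary~\ref{cor:ql_conv_phi} and Lemma~\ref{lem:ql_bdd_grdPhi} and then ``passing to the limit cell by cell'' in \eqref{epb_dis}, which is exactly the finite-dimensional, term-by-term limit you carry out, and your discrete chain rule $(\partial^{(i)}_{\E}\tilde{\rho}^{n+1})_{\s}=\tilde{\rho}_{\s}^{n+1}(\partial^{(i)}_{\E}\ln\tilde{\rho}^{n+1})_{\s}$ is precisely the implicit content of that step (and of the paper's remark that the choice $\rho_\s=\rho_{KL}$ from Lemma~\ref{lem:rho_sig} is what makes the limiting pressure gradient conservative). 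The one step of yours with no counterpart in the paper is the final upgrade from subsequence to full-sequence convergence via uniqueness of solutions to \eqref{eq:isoth_dis}: that uniqueness is proved nowhere (the paper's own proof only extracts convergent subsequences, and well-preparedness in Definition~\ref{def:well_prep_data} does not even fix a limit for $\uu{u}^\veps_0$), so you should either supply it --- a comparison-principle argument as in the Appendix would handle the nonlinear mass update --- or settle for subsequence convergence, as the paper tacitly does.
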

\begin{proof}
    From Lemmas \ref{lem:ql_bdd_den} and \ref{lem:ql_bdd_grdPhi}, it follows that $\left(\rho^{(m)}, \phi^{(m)}\right)_{m\in\mbb{N}}$ converges to $(\tilde{\rho}, \ln\tilde{\rho})\in L^{\infty}(0,T;\Lm\times\Lm)$ as $m \to \infty$. Also, by Corollary~\ref{cor:bdd_vel}, $(\uu{u}^{(m)})_{m\in\mbb{N}}$ is bounded in any norm, which implies that there exist a subsequence of $(\uu{u}^{(m)})_{m\in\mbb{N}}$ which converges, in any discrete norm, to some $\uu{U}\in L^{\infty}(0,T;\uu{H}_{\E, 0}(\Omega))$ as $m \to \infty$. Passing to the limit cell by cell in the scheme \eqref{epb_dis} easily shows that $(\tilde{\rho}, \uu{U})$ is a solution of the stabilised semi-implicit scheme \eqref{eq:isoth_dis}.
\end{proof}
Next, we establish a Lax-Wendroff-type weak consistency of the scheme \eqref{eq:isoth_dis}. Precisely, we show that the scheme \eqref{eq:isoth_dis} is consistent with the weak formulation of the ICE system \eqref{eq:iso_euler}, when the mesh parameters tend to zero. As in Theorem~\ref{thm:weak_cons}, it is customary to make the requisite boundedness and convergence assumptions on the solutions $(\tilde{\rho},\uu{U})$.

\begin{theorem}
\label{thm:iso_weak_cons}
Assume that $\big(\mcal{T}^{(m)},\delta t^{(m)}\big)_{m\in\mbb{N}}$ is a sequence of discretisations such that both $\lim_{m\rightarrow \infty}\delta t^{(m)}$ and $\lim_{m\rightarrow \infty}h_{\mcal{T}^{(m)}}$ are $0$. Let $\big(\tilde{\rho}^{(m)},\uu{U}^{(m)}\big)_{m\in\mbb{N}}$ be the corresponding sequence of discrete solutions of the scheme \eqref{eq:isoth_dis} with respect to an initial data $(\tilde{\rho}_0,\uu{U}_0)\in L^\infty(\Omega)^{1+d}$. We assume that $(\tilde{\rho}^{(m)},\uu{U}^{(m)})_{m\in\mbb{N}}$ satisfies the following.
\begin{enumerate}[label=(\roman*)]
\item $\big(\tilde{\rho}^{(m)},\uu{U}^{(m)}\big)_{m\in\mbb{N}}$ is uniformly bounded in $L^\infty(Q)^{1+d}$, i.e.\ 
\begin{align}
\ubar{C}<(\tilde{\rho}^{(m)})^n_K &\leqslant \bar{C}, \ \forall K\in\mcal{M}^{(m)}, \ 0\leqslant n\leqslant N^{(m)}, \ \forall m\in\mbb{N}, \label{eq:isoth_dens_abs_bound} \\
|(U^{(m)})^n_\sigma| &\leqslant C, \ \forall \sigma\in\mcal{E}^{(m)}, \ 0\leqslant n\leqslant N^{(m)}, \ \forall m\in\mbb{N}. \label{eq:isoth_u_abs_bound}
\end{align}
\item $(\tilde{\rho}^{(m)})_{m\in\mbb{N}}$ is uniformly bounded in $L^{1}([0,T);BV(\Omega))$, i.e.\
\begin{equation}
\norm{\tilde{\rho}^{(m)}}_{\mcal{T},\mcal{M},x,BV}=\sum_{n=0}^{N^{(m)}} \delta t^{(m)}\sum_{\sigma=K|L\in \E_\intr^{(m)}}\abs{(\tilde{\rho}^{(m)})^n_L-(\tilde{\rho}^{(m)})^n_K} \leq C,\; \forall m\in\mbb{N}, \label{eq:bv_dens_space}
\end{equation}
where $\ubar{C}, \bar{C}, C>0$ are constants independent of the discretisations. 
\item $\big(\tilde{\rho}^{(m)},\uu{U}^{(m)}\big)_{m\in\mbb{N}}$ converges to $(\tilde{\rho},\uu{U})\in L^\infty(0, T; L^\infty(\Omega)\times L^\infty(\Omega)^d)$ in $L^r(Q_T)^{1+d}$ for all $1\leqslant r<\infty$.
\end{enumerate}
Assume furthermore that the sequence of discretisations $\big(\mcal{T}^{(m)},\delta t^{(m)}\big)_{m\in\mbb{N}}$ satisfies the mesh regularity conditions:
\begin{equation}
\label{eq:isoth_CFL_meshpar}
\frac{\delta t^{(m)}}{\min_{K\in\mcal{M}^{(m)}}\abs{K}}\leqslant\theta,\ \max_{K \in \mcal{M}^{(m)}} \frac{\diam(K)^2}{\abs{K}}\leqslant\theta,\ \max_{K \in \M^{(m)}} \max_{\sigma \in \E^{(m)}(K)} \frac{|D_\sigma|}{|K|} \leq \theta
 \;\forall m\in\mbb{N},
\end{equation}
where $\theta>0$ is independent of the discretisations. Then $(\tilde{\rho},\uu{U})$ is a weak solution of \eqref{eq:iso_euler}.
\end{theorem}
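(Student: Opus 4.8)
The plan is to run a Lax--Wendroff argument for the limiting scheme \eqref{eq:isoth_dis}, paralleling the proof of Theorem~\ref{thm:weak_cons}. Since \eqref{eq:isoth_dis} is a stabilised staggered discretisation of the ICE system, the purely convective contributions in both the mass and momentum balances can be treated exactly as in \cite[Lemma 4.1]{GHL22} and \cite[Theorem 4.1]{HLN+23}, using the uniform bounds \eqref{eq:isoth_dens_abs_bound}--\eqref{eq:isoth_u_abs_bound}, the strong $L^r$ convergence in (iii), and the mesh regularity \eqref{eq:isoth_CFL_meshpar}. I would therefore concentrate on the two ingredients that are genuinely new relative to a bare convective scheme: the mass-flux stabilisation $-\eta\dt(\partial^{(i)}_\E\ln\tilde\rho^{n+1})_{\s,K}$ and the modified pressure gradient $(\partial^{(i)}_\E\tilde\rho^{n+1})^*_\s$ of \eqref{eq:rho_grad*}. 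A useful preliminary observation is that the interface value $\tilde\rho_\s=\tilde\rho_{KL}$ from Lemma~\ref{lem:rho_sig} satisfies $\ln\tilde\rho_L-\ln\tilde\rho_K=(\tilde\rho_L-\tilde\rho_K)/\tilde\rho_\s$ by \eqref{eq:rho_sig_choice}, so that $(\partial^{(i)}_\E\ln\tilde\rho^{n+1})_\s=\tfrac{1}{\tilde\rho^{n+1}_\s}(\partial^{(i)}_\E\tilde\rho^{n+1})_\s$. This identity exposes the structure of both new terms.

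For the mass equation I would multiply \eqref{eq:isoth_mass_dis} by $\dt\,\abs{K}\,\psi^n_K$, with $\psi^n_K$ the interpolant of $\psi\in C_c^\infty([0,T)\times\bar\Omega)$ at $(t^n,\uu{x}_K)$, and sum over $K$ and $n$. Abel summation in time yields the limit $-\int_0^T\!\int_\Omega\tilde\rho\,\D_t\psi\,\dd\uu{x}\dd t-\int_\Omega\tilde\rho_0\,\psi(0,\cdot)\,\dd\uu{x}$, while the convective part of $F^{n,n+1}_{\s,K}$ converges to $-\int_0^T\!\int_\Omega\tilde\rho\,\uu{U}\cdot\grd\psi\,\dd\uu{x}\dd t$. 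The stabilisation part equals $\eta\dt\,\tfrac{1}{\tilde\rho^{n+1}_\s}(\partial^{(i)}_\E\tilde\rho^{n+1})_\s$ by the identity above; it carries the extra factor $\dt$, its coefficient is bounded by $\eta/\ubar{C}$ on account of \eqref{eq:isoth_dens_abs_bound}, and after a discrete summation by parts the remaining spatial increments of $\tilde\rho^{n+1}$ are absorbed by the BV bound \eqref{eq:bv_dens_space}. Hence this residual tends to $0$ and the mass balance recovers the weak formulation of the first equation in \eqref{eq:iso_euler}.

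For the momentum equation I would test \eqref{eq:isoth_mom_dis} with $\dt\,\abs{\Ds}\,\psi^{(i),n}_\s$ and sum over $\s\in\E^{(i)}_\intr$, $i$ and $n$; the time-derivative and dual convection limits are obtained as above and as in \cite[Theorem 4.1]{HLN+23}. The crux is the pressure term. Using the interface identity, the first two summands of \eqref{eq:rho_grad*} combine into $\tfrac{\tilde\rho^n_\s}{\tilde\rho^{n+1}_\s}(\partial^{(i)}_\E\tilde\rho^{n+1})_\s$, and writing $\tfrac{\tilde\rho^n_\s}{\tilde\rho^{n+1}_\s}=1-\tfrac{\tilde\rho^{n+1}_\s-\tilde\rho^n_\s}{\tilde\rho^{n+1}_\s}$ splits it into a leading piece and a correction. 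The leading piece $\sum_n\dt\sum_{i,\s}\abs{\Ds}\psi^{(i),n}_\s(\partial^{(i)}_\E\tilde\rho^{n+1})_\s$ is rewritten by the gradient--divergence duality \eqref{eq:dis_dual} as $-\sum_n\dt\sum_K\abs{K}\tilde\rho^{n+1}_K(\divm\uu{\psi}^n)_K$, which, by the strong convergence of $\tilde\rho$ and the consistency of $\divm$ on smooth fields, tends to $-\int_0^T\!\int_\Omega\tilde\rho\,\dive\uu{\psi}\,\dd\uu{x}\dd t$; this is exactly the weak contribution of $\grd\tilde\rho$. Combining all surviving limits yields the weak formulation of the momentum equation in \eqref{eq:iso_euler}.

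The main obstacle I anticipate is the vanishing of the two correction terms in \eqref{eq:rho_grad*}. The term $-\dt(\partial^{(i)}_\E(\divm U^n))_\s$ carries an explicit factor $\dt$; applying \eqref{eq:dis_dual} a second time converts the tested sum into $-\dt\sum_K\abs{K}(\divm U^n)_K(\divm\uu{\psi}^n)_K$, which is bounded by $\dt\,\norm{\dive\uu{\psi}}_\infty\sum_K\abs{\partial K}$ through the $L^\infty$ bound \eqref{eq:isoth_u_abs_bound}, and the mesh regularity \eqref{eq:isoth_CFL_meshpar} then drives it to $0$. The genuinely delicate term is $-\tfrac{\tilde\rho^{n+1}_\s-\tilde\rho^n_\s}{\tilde\rho^{n+1}_\s}(\partial^{(i)}_\E\tilde\rho^{n+1})_\s$: the naive estimate using the BV bound alone is merely bounded, not small, so the required smallness must be extracted from the time increment $\tilde\rho^{n+1}_\s-\tilde\rho^n_\s$. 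Here I would invoke the mass update \eqref{eq:isoth_mass_dis}, which expresses this increment as a $\dt$-weighted flux difference and, together with the Lipschitz dependence of $\tilde\rho_\s$ on $(\tilde\rho_K,\tilde\rho_L)$ over $[\ubar{C},\bar{C}]^2$, furnishes the extra smallness; pairing this with the space-BV bound \eqref{eq:bv_dens_space} and the conditions \eqref{eq:isoth_CFL_meshpar}, precisely along the lines of the discrete space-time translate estimates of \cite[Lemma A.1]{GHL22}, forces the correction to vanish. Once all residuals are shown to be negligible, the limit $(\tilde\rho,\uu{U})$ satisfies the weak formulation of \eqref{eq:iso_euler}.
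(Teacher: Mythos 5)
Your proposal is correct in the paper's setting and shares the overall skeleton of the paper's proof: both defer the time-derivative and convective terms to the arguments of Theorem~\ref{thm:weak_cons} and the references therein, treat the leading part of the pressure gradient by the duality \eqref{eq:dis_dual}, and isolate the correction terms in \eqref{eq:rho_grad*} as the real difficulty. (Incidentally, your ``combine, then re-split'' manipulation of the first two summands of \eqref{eq:rho_grad*} is a round trip: via $(\partial^{(i)}_\E\ln\tilde\rho^{n+1})_\s=(\partial^{(i)}_\E\tilde\rho^{n+1})_\s/\tilde\rho^{n+1}_\s$ you recover exactly the decomposition the paper already wrote down, so you end up estimating the same two correction terms.) The genuine divergence is the mechanism used to kill the term $-(\tilde\rho^{n+1}_\s-\tilde\rho^n_\s)(\partial^{(i)}_\E\ln\tilde\rho^{n+1})_\s$. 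The paper keeps the time increment, bounds the logarithmic-gradient factor using the $L^\infty$ bounds, and then appeals to the convergence of discrete \emph{time} translates (\cite[Lemma A.6]{HLN+23}): the weighted sum $\sum_n\dt\sum_\s\abs{\Ds}\abs{\tilde\rho^{n+1}_\s-\tilde\rho^n_\s}$ is essentially the $L^1(Q_T)$ norm of a time translate of $\tilde\rho^{(m)}$, which vanishes by the strong convergence in hypothesis (iii) and the $L^1$-continuity of translations --- a soft, compactness-type argument that never touches the scheme. You instead extract the smallness from the scheme itself: the mass update \eqref{eq:isoth_mass_dis} gives $\abs{\tilde\rho^{n+1}_K-\tilde\rho^n_K}\leqslant C\,\dt\,\abs{\partial K}/\abs{K}$, the Lipschitz character of the logarithmic mean of Lemma~\ref{lem:rho_sig} on $[\ubar{C},\bar{C}]^2$ transfers this to the interface values, and the space-BV bound (ii) absorbs the remaining spatial increment of $\ln\tilde\rho^{n+1}$. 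Both routes are legitimate. Yours is quantitative (it produces an explicit rate and does not use the strong convergence of $\tilde\rho^{(m)}$ for this step); the paper's is softer but insensitive to the mesh-counting factors below.

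One caveat on your counting. Your bounds for the two correction terms --- $\dt\sum_K\abs{\partial K}$ for the term $\dt(\partial^{(i)}_\E(\divm U^n))_\s$, and $\bigl(\max_\s\abs{\s}\bigr)\bigl(\dt\max_K\abs{\partial K}/\abs{K}\bigr)$ times the BV norm for the delicate term --- scale like $h^{d-1}$ and $h^{2(d-1)}$ under \eqref{eq:isoth_CFL_meshpar}, so they vanish when $d\geqslant2$ (the setting of the paper's MAC framework) but are merely bounded, not small, when $d=1$. This is easily repaired for the $\dt(\partial^{(i)}_\E(\divm U^n))_\s$ term in any dimension: after applying \eqref{eq:dis_dual}, perform a second, edge-wise summation by parts so that the differences $(\divm\uu{\psi}^n)_K-(\divm\uu{\psi}^n)_L$ of the smooth test field appear; these are $O(h_{\mcal{T}})$, which cancels the factor $\sum_\s\abs{\s}\sim h_{\mcal{T}}^{-1}$ and makes the whole contribution $O(\dt)$ outright. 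For the delicate term in $d=1$ one would have to fall back on the paper's time-translate argument, which is exactly what that softer route buys.
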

\begin{proof}
We follow the same approach as in the proof of Theorem~\ref{thm:weak_cons} and hence omit most of the details. However, the second term in \eqref{eq:rho_grad*} arising from the pressure gradient in the momentum update \eqref{eq:isoth_mom_dis} needs attention. Proceeding as in Theorem~\ref{thm:weak_cons}, the above term can be estimated to get
\begin{equation}
R^{(m)}=C_{\uu{\psi}}\sum_{n=0}^{N^{(m)}-1}\dt^n\sum_{\s\in\E^{(m)}_\intr}\abs{\Ds}\abs{(\tilde{\rho}_\s^{n+1}-\tilde{\rho}_\s^n)(\partial^{(i)}_{\E}\ln\tilde{\rho}^{n+1})_{\sigma}},
\end{equation}
where $C_{\uu{\psi}}$ is a constant depending only on the test function $\uu{\psi} \in C^{\infty}_{c}([0, T ) \times\bar{\Omega})^d$.
Thanks to the uniform boundedness assumptions \eqref{eq:isoth_dens_abs_bound} and \eqref{eq:bv_dens_space}, we can estimate the above term to get
\begin{equation}
R^{(m)}\leqslant C(\uu{\psi}, \bar{C},\ubar{C})\sum_{n=0}^{N^{(m)}-1}\dt^n\sum_{\s\in\E^{(m)}_\intr }\abs{\Ds}\abs{(\tilde{\rho}_\s^{n+1}-\tilde{\rho}_\s^n)}.
\end{equation}
Using analogous techniques as in \cite[Lemma A.6]{HLN+23} regarding the convergence of discrete time translates, we see that $R^{(m)}$ tends to $0$ as $m\to\infty$, using the $L^1$ convergence of $\tilde{\rho}^{(m)}$ and the regularity of the sequence of meshes.
\end{proof}
\begin{remark}  
    Thanks to the choice $\rho_\s=\rho_{KL}$ in Lemma~\ref{lem:rho_sig}, we obtain a conservative approximation of the pressure gradient in the limiting system. Use of any other choice for the interface density, e.g.\ a centred choice, in the source term will not give a conservative approximation and consequently the resulting scheme can lead to wrong shock speeds when there are shock discontinuities.
\end{remark}

\section{Numerical Results}
\label{sec:num_res}
In this section, we report the results of our numerical experiments using the proposed scheme \eqref{eq:dis_cons_mas}-\eqref{eq:dis_poisson} in order to validate the claims made in the preceding sections. The numerical implementation of the scheme \eqref{eq:dis_cons_mas}-\eqref{eq:dis_poisson} is carried out as follows. First, we eliminate the density term in \eqref{eq:dis_cons_mas} using \eqref{eq:dis_poisson} to obtain a non-linear elliptic problem for $\phi^{n+1}$. The presence of non-linear terms in $\phi^{n+1}$ necessitates a Newton iteration in the numerical solution. Once $\phi^{n+1}$ is evaluated, the density $\rho^{n+1}$ and velocity $\uu{u}^{n+1}$ are calculated explicitly using \eqref{eq:dis_cons_mas} and \eqref{eq:dis_cons_mom}, respectively. 

It should be emphasised that the time-step condition \eqref{eq:cfl} required by the stability analysis in Section~\ref{sec:stable_scheme} is implicit, contains flux terms, and hence it is difficult to implement. In order to overcome this challenge, we derive a sufficient condition that is easier to enforce. 

\begin{proposition}
Suppose $\dt>0$ is such that for each $\s\in\E_\intr^{(i)},\; i\in\{1,\dots,d\},\; \s = K|L$, the following holds: 
\begin{equation}
\label{eq:time-step_suff}
    \dt\max\Bigg\{\frac{\abs{\D K}}{\abs{K}},\frac{\abs{\D L}}{\abs{L}}\Bigg\}\Bigg(\abs{u^n_\s} + \sqrt{\tilde{\eta}\left|\phi^{n+1}_L - \phi^{n+1}_K\right|}\Bigg)\leq \frac{1}{5}\mu^{n}_{K,L},
\end{equation}
where $\abs{\D K}=\sum_{\s\in\E(K)}\abs{\s}$, \, $\tilde{\eta}\max\big\{\frac{\abs{\D K}}{\abs{K}},\frac{\abs{\D L}}{\abs{L}}\big\}=\eta\frac{\abs{\s}}{\abs{\Ds}}$ and $\mu^{n}_{K,L} = \displaystyle\frac{\min\{\rho^n_K,\rho^n_L\}}{\rho^{n}_\s}$. Then $\dt$ satisfies the time-step restriction \eqref{eq:suftime}.
\end{proposition}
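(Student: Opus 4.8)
The plan is to reduce the whole statement to the single edgewise bound
\[
P_\sigma := \frac{\dt}{\abs{\Ds}}\sum_{\epsilon\in\tilde{\E}(\Ds)}\abs{F^n_{\epsilon,\sigma}}\leqslant\frac{1}{5}\rho^n_{\Ds},
\]
from which \eqref{eq:suftime} is immediate. Indeed, the dual mass balance \eqref{eq:dis_cons_mass_dual} gives $\rho^{n+1}_{\Ds}=\rho^n_{\Ds}-\frac{\dt}{\abs{\Ds}}\sum_{\epsilon}F^n_{\epsilon,\sigma}\geqslant\rho^n_{\Ds}-P_\sigma\geqslant\frac{4}{5}\rho^n_{\Ds}$, so that $P_\sigma/\rho^{n+1}_{\Ds}\leqslant(\tfrac15\rho^n_{\Ds})/(\tfrac45\rho^n_{\Ds})=\tfrac14$, which is exactly \eqref{eq:suftime}. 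This is precisely why the factor $\frac15$ is placed on the right-hand side of \eqref{eq:time-step_suff}, and it shows that the only real work is to deduce the displayed bound on $P_\sigma$ from the hypothesis \eqref{eq:time-step_suff}. The reduction itself is routine once the sign in \eqref{eq:dis_cons_mass_dual} and $\abs{\sum_\epsilon F^n_{\epsilon,\sigma}}\leqslant\sum_\epsilon\abs{F^n_{\epsilon,\sigma}}$ are used.

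To estimate $P_\sigma$, I would first invoke the construction in Definition~\ref{def:disc_conv_flux}: each dual flux $F^n_{\epsilon,\sigma}$ is a linear combination, with constant coefficients, of the neighbouring primal mass fluxes, so $\sum_{\epsilon}\abs{F^n_{\epsilon,\sigma}}$ is dominated by a weighted sum of the primal quantities $\abs{F^n_{\tau,K}}$ and $\abs{F^n_{\tau,L}}$ over $\tau\in\E(K)\cup\E(L)$; together with the ratios $\abs{\Ds}/\abs{K}$ and $\abs{\Ds}/\abs{L}$ this gives rise to the geometric factor $\max\{\abs{\D K}/\abs{K},\abs{\D L}/\abs{L}\}$ appearing in \eqref{eq:time-step_suff}. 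Each primal flux is then split through \eqref{eq:stabterms}--\eqref{eq:dis_Q} into a convective and a stabilisation part,
\[
\abs{F^n_{\tau,K}}\leqslant\abs{\tau}\rho^n_\tau\abs{u^n_\tau}+\abs{\tau}\,\eta\dt\,\frac{\abs{\tau}}{\abs{D_\tau}}\,\bigl|\phi^{n+1}_{L'}-\phi^{n+1}_{K'}\bigr|,\qquad\tau=K'|L'.
\]

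The two summands of the hypothesis handle the two parts separately. The convective part is linear in $\dt$ and is controlled directly by the $\abs{u^n_\sigma}$-term of \eqref{eq:time-step_suff}; after reinstating the factor $\rho^n_\tau$ and using $\mu^n_{K,L}\,\rho^n_\sigma=\min\{\rho^n_K,\rho^n_L\}$, it contributes at most a fixed fraction of $\min\{\rho^n_K,\rho^n_L\}$. The stabilisation part, however, carries \emph{two} factors of $\dt$ (one explicit in $P_\sigma$, one from $Q^{n+1}_\sigma=\eta\dt\,(\partial^{(i)}_\E\phi^{n+1})_\sigma$), and this is exactly where the square root in \eqref{eq:time-step_suff} is needed: squaring the bound $\dt\max\{\cdots\}\sqrt{\tilde\eta\,|\phi^{n+1}_L-\phi^{n+1}_K|}\leqslant\frac15\mu^n_{K,L}$ and using the relation $\tilde\eta\max\{\cdots\}=\eta\abs{\sigma}/\abs{\Ds}$ reproduces precisely the quadratic-in-$\dt$ stabilisation flux. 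The two elementary facts that make the densities line up are $\mu^n_{K,L}\leqslant1$ (since $\rho^n_\sigma=\rho_{KL}\in\llbracket\rho^n_K,\rho^n_L\rrbracket$ by Lemma~\ref{lem:rho_sig}, whence $\rho^n_\sigma\geqslant\min\{\rho^n_K,\rho^n_L\}$) and the dual-average identity \eqref{eq:mass_dual}, which yields $\rho^n_{\Ds}\geqslant\min\{\rho^n_K,\rho^n_L\}$.

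The step I expect to be the main obstacle is the bookkeeping in this last estimate. The dual cell $\Ds$ of $\sigma$ is assembled from primal fluxes at the \emph{neighbouring} edges $\tau\neq\sigma$, so the density ratios $\mu^n_{K',L'}$ furnished by the hypothesis at those edges must be reconciled with the single target density $\rho^n_{\Ds}$, and the constants arising from the convective and from the (squared) stabilisation contributions must be shown to sum to at most $\frac15$. I expect this to hinge on organising the sum with the edgewise weights $\abs{\tau}/\abs{\D K}$, which sum to one, so that no spurious factor equal to the number of faces of $K$ is introduced, and on using $\mu^n_{K',L'}\leqslant1$ to absorb the squared stabilisation term, with the mesh regularity controlling the remaining geometric ratios. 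By contrast, the reduction via the dual mass balance described in the first paragraph is straightforward.
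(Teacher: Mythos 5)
Your proposal is correct in structure and, in substance, reconstructs the argument the paper merely points to: the paper's entire proof is a one-line deferral to \cite[Proposition 3.2]{CDV17} (an explicit-scheme analogue), so the comparison here is against that outsourced argument rather than against any in-paper computation. Your opening reduction is complete and exactly right: with $P_\sigma=\frac{\dt}{\abs{\Ds}}\sum_{\epsilon\in\tilde{\E}(\Ds)}\abs{F^n_{\epsilon,\sigma}}$, the dual mass balance \eqref{eq:dis_cons_mass_dual} gives $\rho^{n+1}_{\Ds}\geqslant\rho^n_{\Ds}-P_\sigma$, so $P_\sigma\leqslant\frac15\rho^n_{\Ds}$ forces $\rho^{n+1}_{\Ds}\geqslant\frac45\rho^n_{\Ds}\geqslant 4P_\sigma$, which is \eqref{eq:suftime}; this is precisely the algebra the paper itself runs in the reverse direction when it derives \eqref{eq:rho_5/4} from \eqref{eq:suftime}, and it correctly explains why the factor $\frac15$ appears in \eqref{eq:time-step_suff}. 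Your sketch of the remaining edgewise estimate also identifies the right mechanisms: the splitting of the primal fluxes via \eqref{eq:stabterms} and \eqref{eq:dis_Q}, the squaring of the $\sqrt{\tilde{\eta}\abs{\phi^{n+1}_L-\phi^{n+1}_K}}$ term to absorb the stabilisation flux that is quadratic in $\dt$, and the density comparisons $\rho^n_\sigma\geqslant\min\{\rho^n_K,\rho^n_L\}$ (Lemma~\ref{lem:rho_sig}) and $\rho^n_{\Ds}\geqslant\min\{\rho^n_K,\rho^n_L\}$ (from \eqref{eq:mass_dual}). What you leave unfinished --- propagating the hypothesis from the neighbouring edges $\tau\in\E(K)\cup\E(L)$, where it furnishes the ratios $\mu^n_{K',L'}$ rather than $\mu^n_{K,L}$, through the MAC dual-flux coefficients and checking that all constants close at $\frac15$ --- is genuine work, but it is exactly the bookkeeping the paper also declines to reproduce; relative to the paper there is no missing idea and no step that would fail, only the final summation left implicit.
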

\begin{proof}
    The proof proceeds along the lines of \cite[Proposition 3.2]{CDV17}, where a similar result derived for an explicit scheme; see also \cite{AGK24,DVB17,DVB20} for analogous treatments.
\end{proof}
\begin{remark}
    Note that the above sufficient condition \eqref{eq:time-step_suff} is again implicit, and in our calculations we implement the same in an explicit fashion. 
\end{remark}

\begin{remark}
    The positivity of the density can be obtained using \eqref{eq:time-step_suff} with straightforward calculations.  
\end{remark}

\begin{remark}
Throughout this subsection we consider the following classical scheme on a collocated MAC grid \cite{HL+20} as a reference scheme to compare the solutions:
\begin{subequations}
\label{eq:cl_dis_epb}
\begin{gather}
\frac{1}{\dt}(\rho_{K}^{n+1}-\rho_{K}^{n})+\frac{1}{\abs{K}}\sum_{\s\in\E(K)}F_{\s,K}^{n}=0, \ \forall K\in \M, \\
\frac{1}{\dt}(\rho_{K}^{n+1}\uu{u}_{K}^{n+1}-\rho_{K}^{n}\uu{u}_{K}^{n})+\frac{1}{\abs{K}}\sum_{\s\in\E(K)}F_{\s, K}^{n}\uu{u}_{\s}^{n}=-\rho_{K}^{n+1}(\grd_{\M}\phi^{n+1})_K, \ \forall K\in\M,  \label{eq:cl_dis_cons_mom}\\
-\veps^2(\Delta_{\M}\phi^{n+1})_{K}=\rho_{K}^{n+1}-e^{\phi_K^{n+1}}, \ \forall K\in \M,  \label{eq:cl_dis_poisson}
\end{gather}
\end{subequations}
where $F_{\s,K}^{n}=\abs{\s}F_{\s}(\rho^n,\uu{u}^n)\cdot\uu{\nu}_{\s,K}$ with $F_\s$ being chosen as the Rusanov flux; see \cite{CDV07} for more details.
\end{remark}


\subsection{Five Branch Problem}
\label{sec:1d_5-branch}
In this case study we consider the so-called five-branch test problem from \cite{DLS+12}. The aim of experiment is to explore the capabilities of the numerical scheme in both the dispersive and the quasineutral regimes. The initial condition consists of a Gaussian profile for the density and a sinusoidal profile for the velocity, i.e.\   
\begin{equation}
    \rho(0, x) = \frac{1}{\pi} e^{-(x-\pi)^2},\; u(0, x) = \sin^3(x).
\end{equation}
The computational domain $[0,2\pi]$ is divided into $100$ mesh points. The boundary conditions are homogeneous Neumann for the density and the velocity and periodic for the potential. In Figure~\ref{fig:five-branch_eps1}, we depict the density, the velocity and the potential profiles at $T=1$ and the corresponding initial values corresponding to the choice $\veps=1$. Analogously, we present the same profiles corresponding to $\veps=10^{-2}$ in Figure~\ref{fig:five-branch_eps-01}. In both figures, we make a comparison of the computed solutions against a reference solution obtained using the explicit Rusanov scheme on a fine mesh of $500$ points for the same values of $\veps$. We observe that when $\veps=1$, the density profiles exhibit peaks which ultimately leads to singularities. In a semi-classical setting, these singularities in the density profile lead to multi-valued solutions; see \cite{LW07} for more details. On the other hand, the density profile remains smooth and it spreads out in the entire computational domain at $T=1$ when $\veps=10^{-2}$. Our results are well in agreement with those reported in \cite{DLS+12}. 

\begin{figure}[htbp]
  \centering
    \includegraphics[height=0.185\textheight]{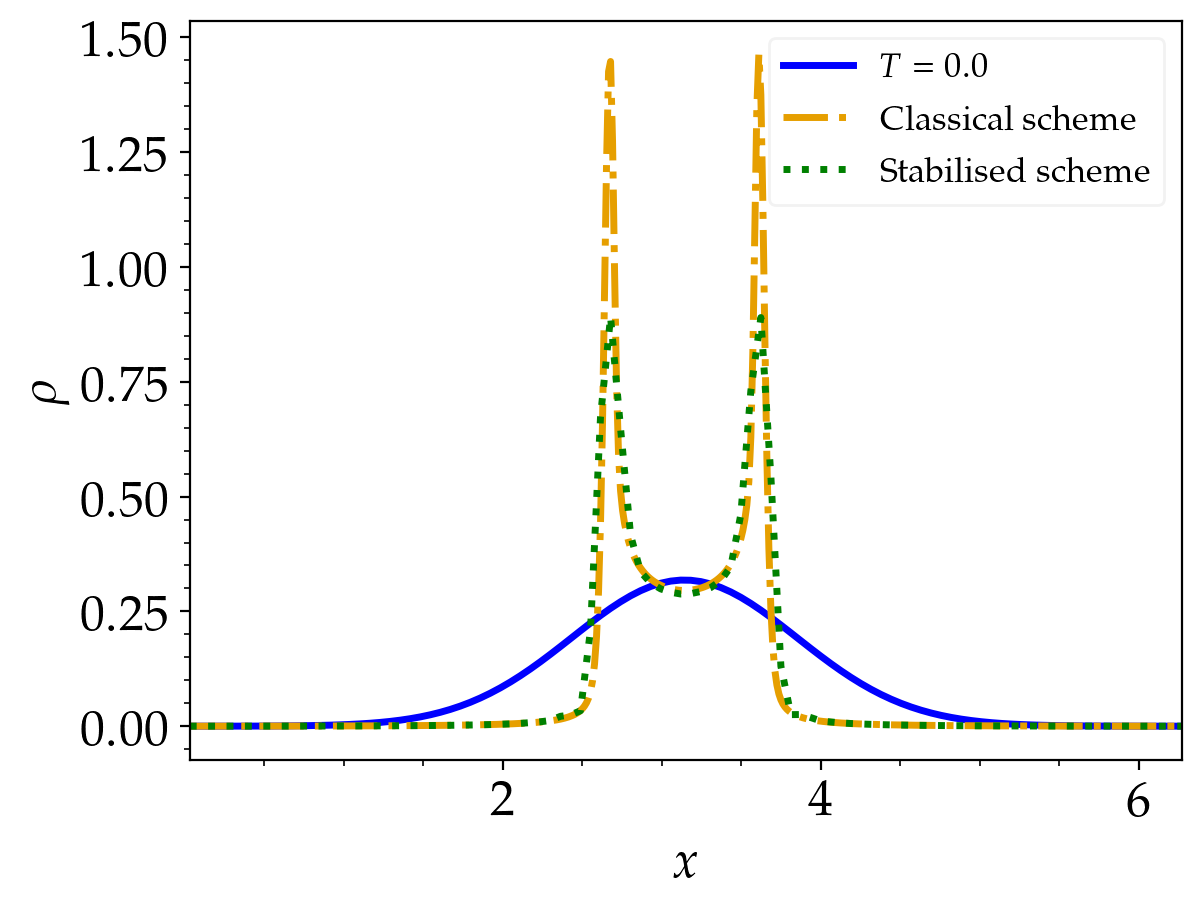}
    \includegraphics[height=0.185\textheight]{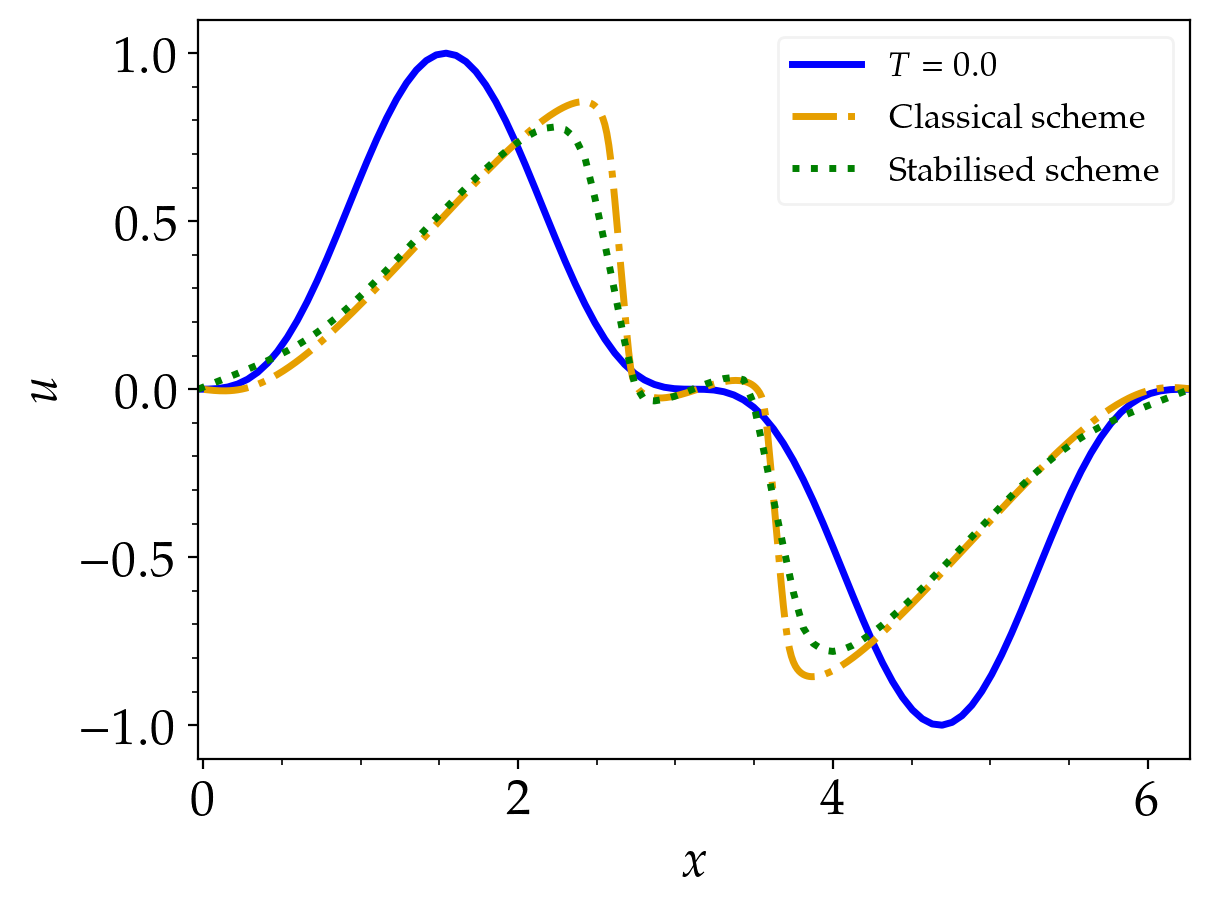}
    \includegraphics[height=0.185\textheight]{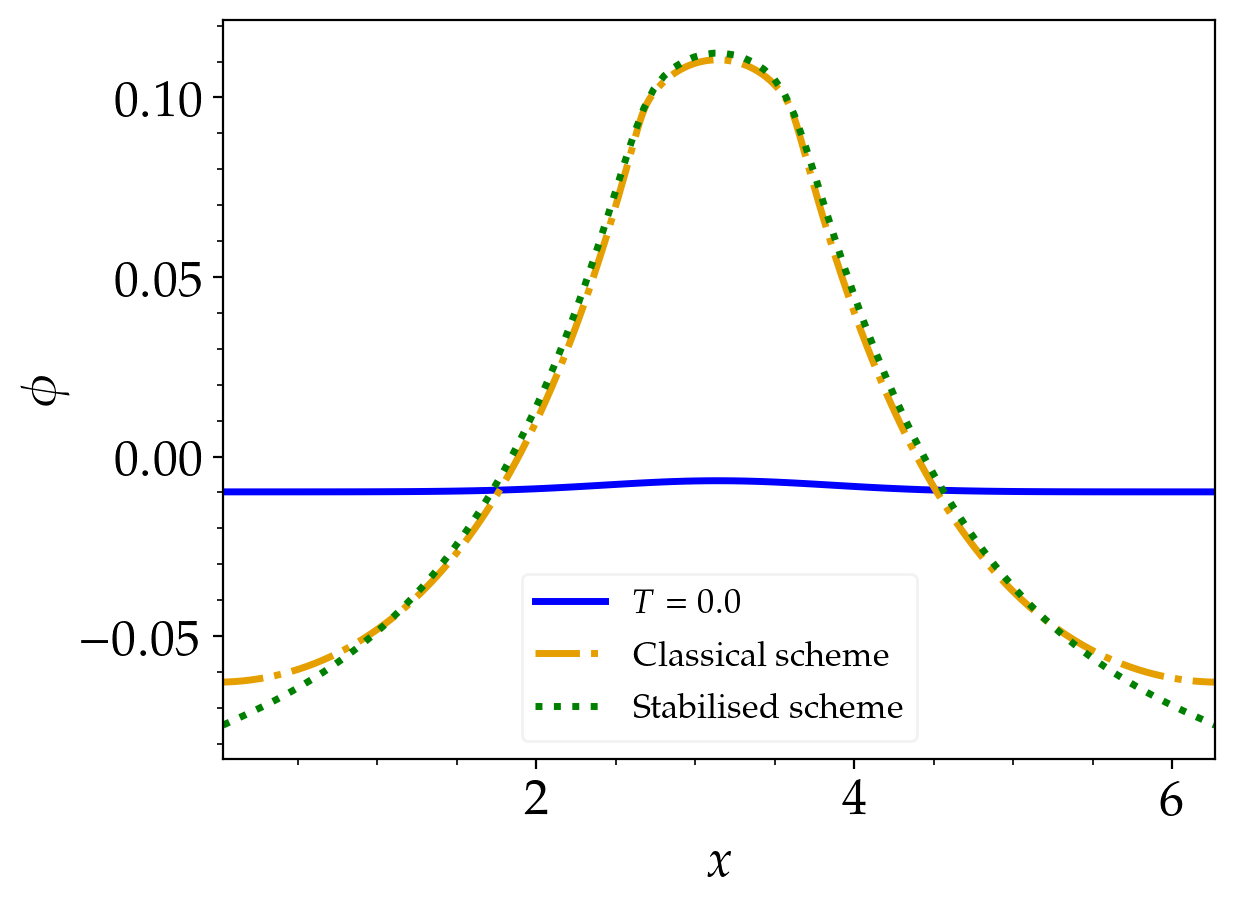}
    \caption{Ion density, velocity, potential plots for the five-branch test case at $T=1$ for $\veps=1$.}  
    \label{fig:five-branch_eps1}
\end{figure}
 
\begin{figure}[htbp]
  \centering
  \includegraphics[height=0.191\textheight]{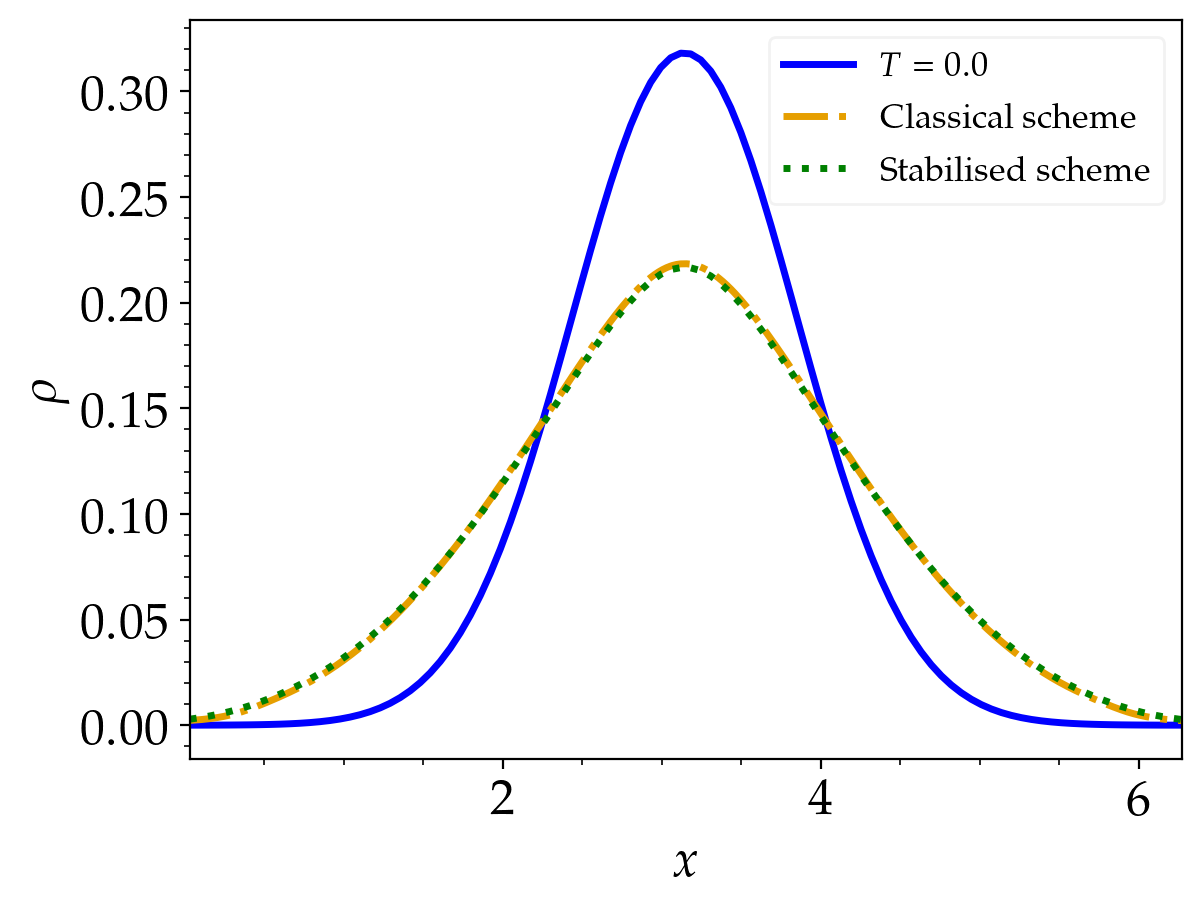}
    \includegraphics[height=0.191\textheight]{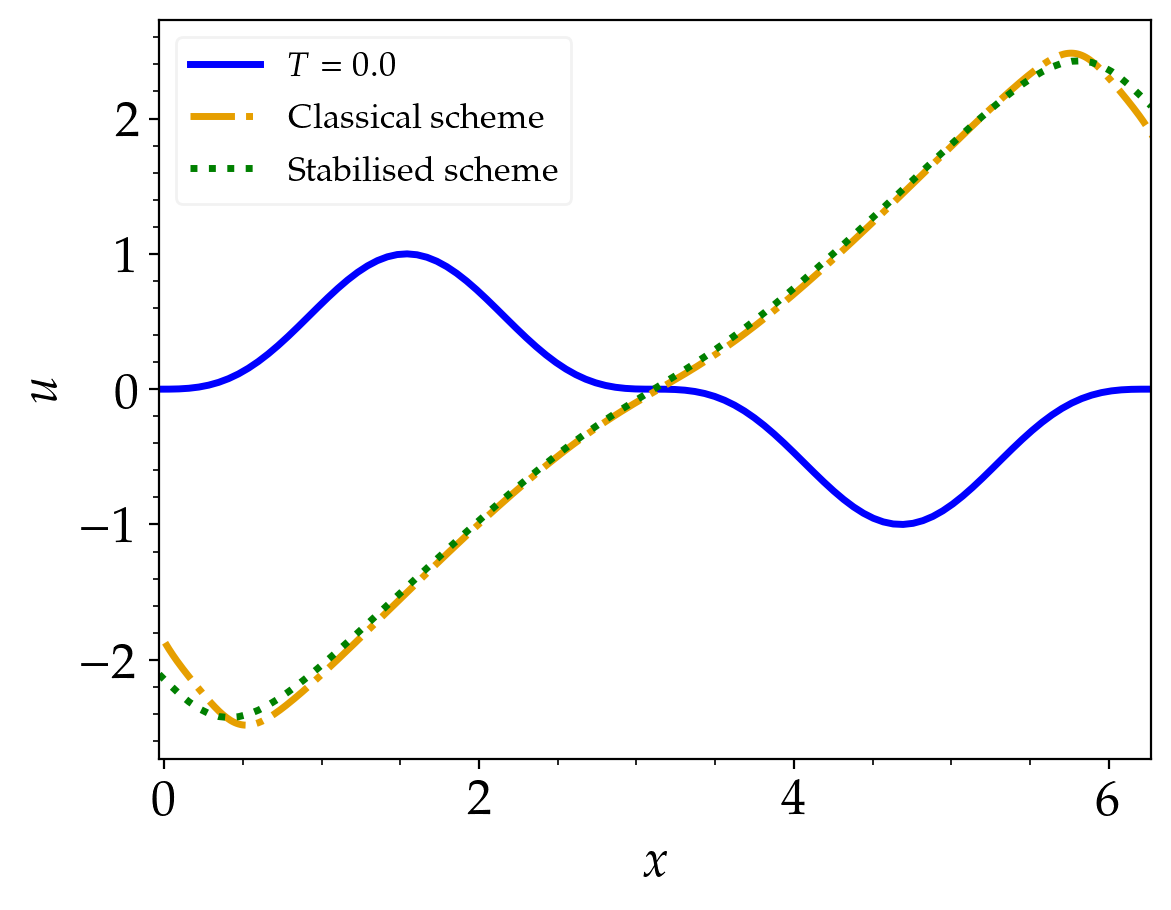}
    \includegraphics[height=0.191\textheight]{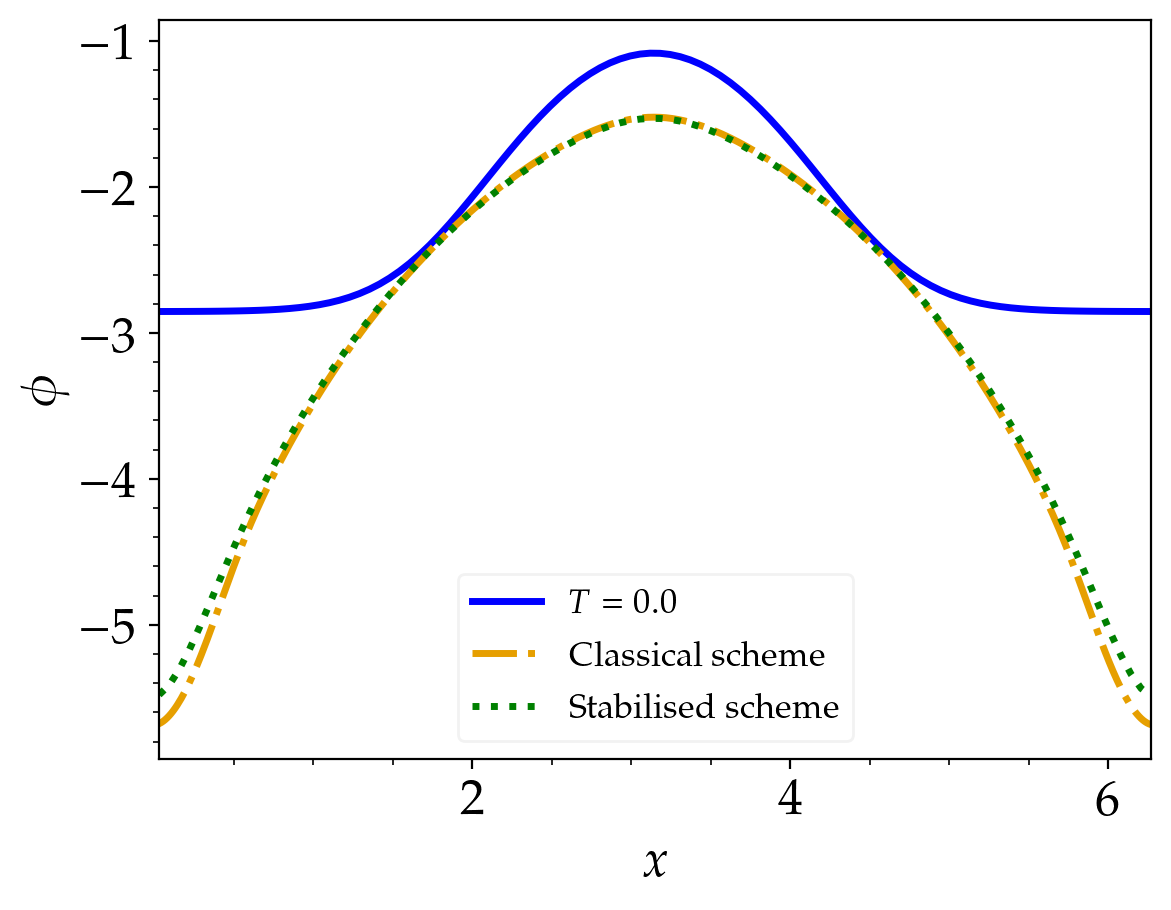}
    \caption{Ion density, velocity, potential plots for the five-branch test case at $T=1$ for $\veps=10^{-2}$.}  
    \label{fig:five-branch_eps-01}
\end{figure}

\subsection{1D Shock Tube Problem}
\label{sec:1d_shock-tube}
We consider the following shock tube problem from \cite{DLS+12} which involves two outgoing shock waves. The initial density is a constant, while the initial velocity has a jump at $x = 0$. The computational domain is $[-0.2, 0.2]$. The Debye length $\veps$ is varied from $10^{-2}$ to $10^{-4}$. The initial conditions read
\begin{equation}
    \rho(0, x) = 1,\; u(0, x) = \begin{cases}
       -1, & \text{if } x<0, \\
        +1, & \text{if } x\geqslant 0.
    \end{cases}
\end{equation}
The problem is supplemented with extrapolation boundary conditions for the density and the velocity and periodic boundary condition for the potential. The domain is discretised using $100$ (resp.\ $1000$) grid points corresponding to $\veps=10^{-2}$ (resp.\ $\veps=10^{-4}$) so that the grid remains unresolved with respect to $\veps$.

\begin{figure}[htbp]
    \centering
    \includegraphics[height=0.191\textheight]{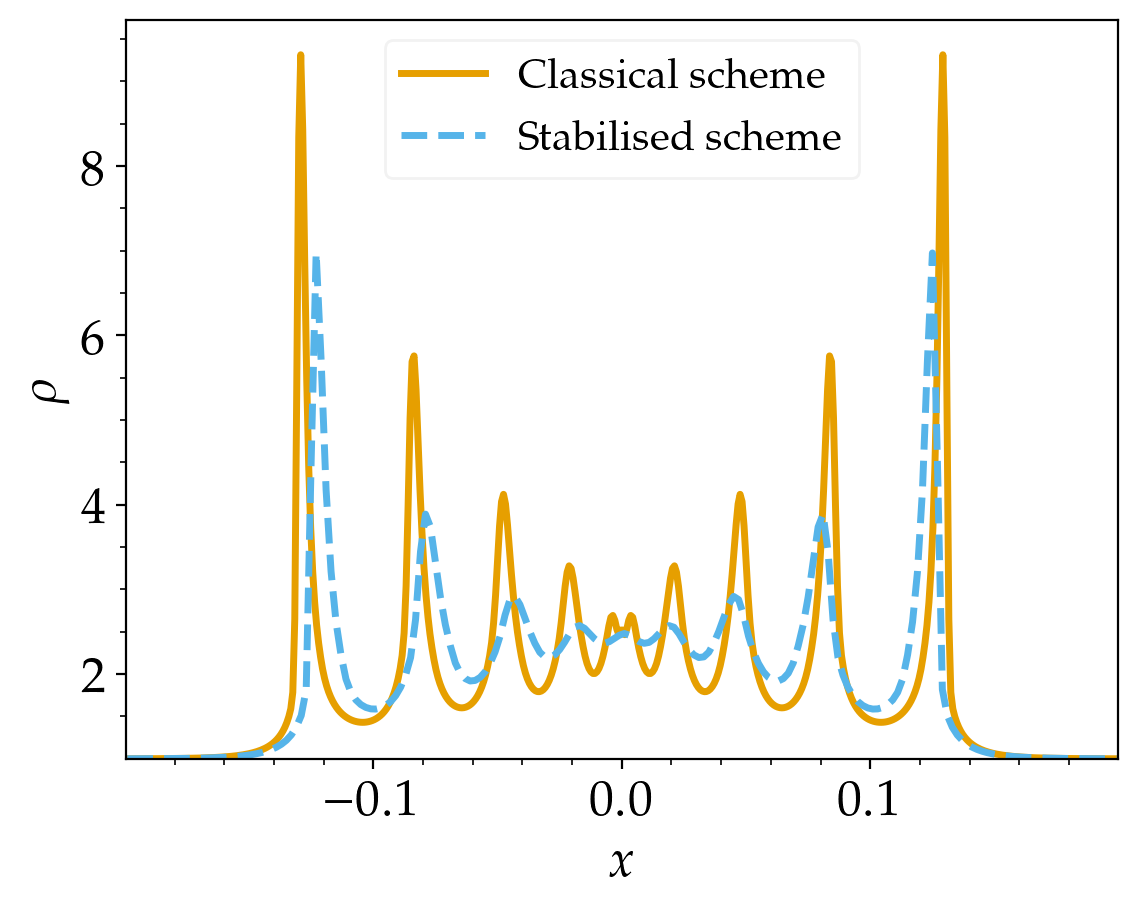}
    \includegraphics[height=0.191\textheight]{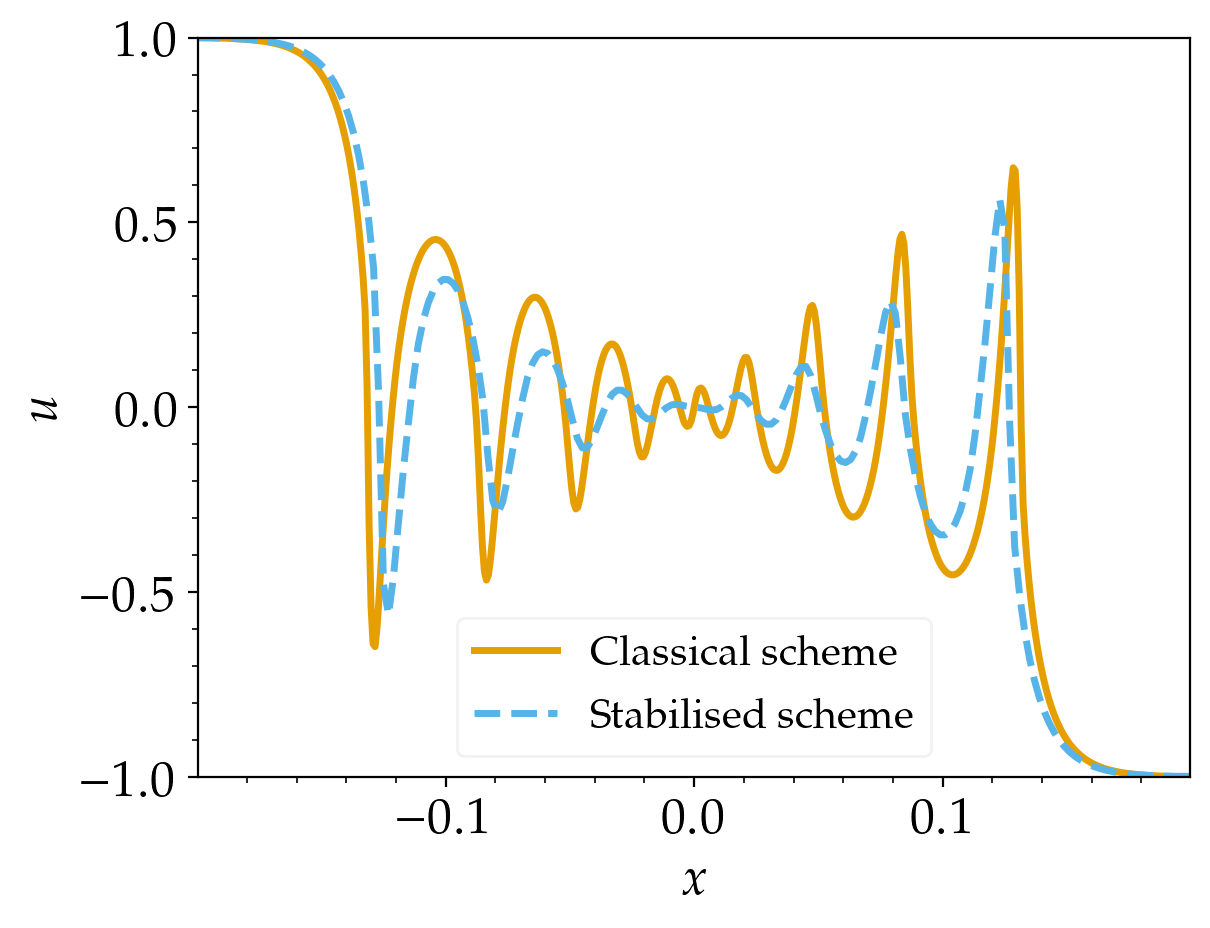}
    \includegraphics[height=0.191\textheight]{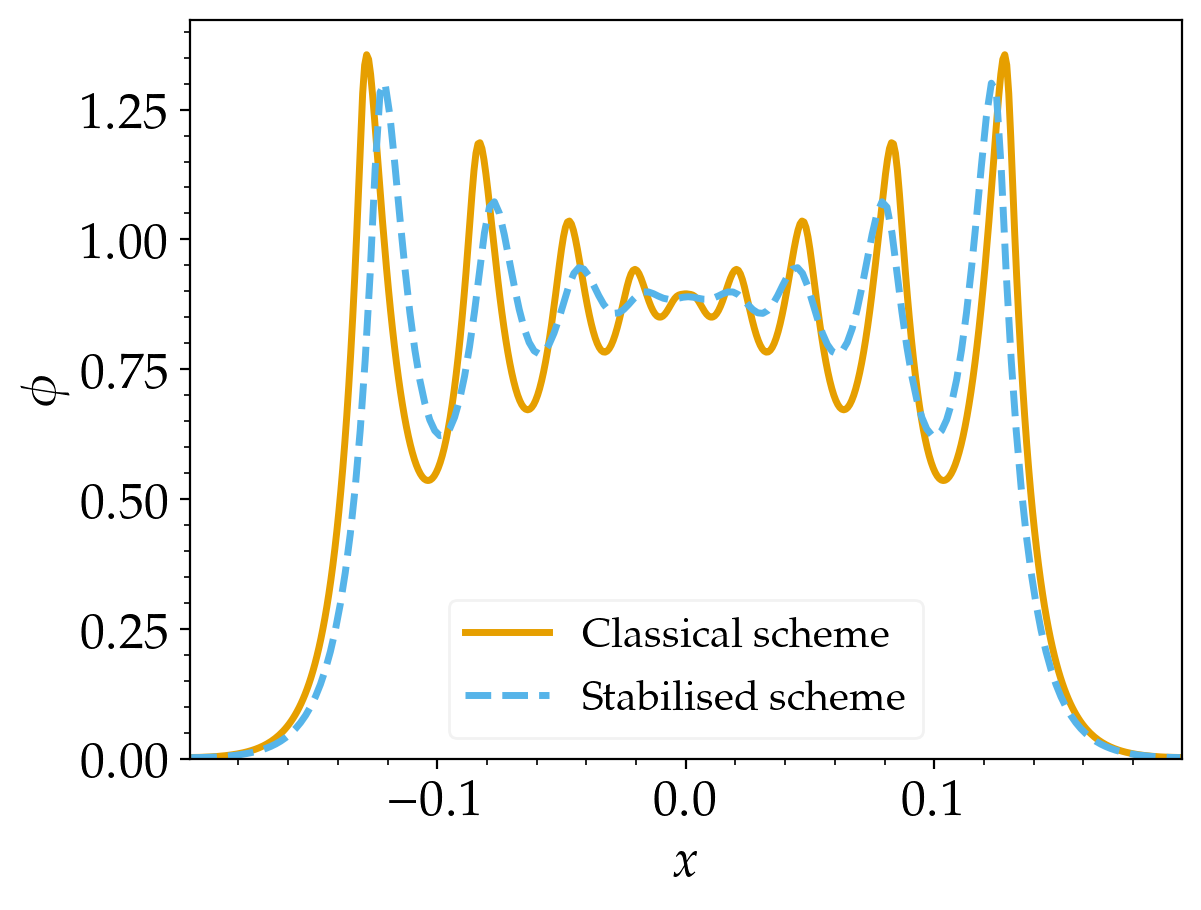}
    \caption{ Density, velocity and potential profiles for the shock tube test case at $T=0.2$ for $\veps=10^{-2}$.} 
    \label{fig:1d_shock-tube_eps-02}
\end{figure}

In Figure~\ref{fig:1d_shock-tube_eps-02} we present the density, the velocity and the potential profiles corresponding to $\veps=10^{-2}$ against a reference solution obtained using the explicit Rusanov scheme with a resolved mesh containing $500$ points. The chosen value $\veps=10^{-2}$ is not small enough for the quasineutrality assumption to be valid. The numerical results contain oscillations and the consequent singularities which are nothing but signatures of the dispersive EPB system with moderately small $\veps$. The proposed scheme effectively captures these dispersive oscillations, despite being on a coarse mesh. Furthermore, both the classical explicit scheme and the present scheme show roughly same amount of numerical dissipation and the results are comparable. We plot the density, the velocity and the potential profiles corresponding to $\veps=10^{-4}$ against a reference solution obtained using the explicit Rusanov scheme with a mesh resolution of $2000$ points in Figure~\ref{fig:1d_shock-tube_eps-04}. The value $\veps=10^{-4}$ is quite small to observe the hydrodynamic quasineutral regime. We can clearly notice the formation and propagation of two shocks in the solutions from Figure~\ref{fig:1d_shock-tube_eps-04}. As in \cite{DLS+12}, we notice oscillations near the discontinuities. Nevertheless, the energy stable scheme's performance is far superior to that of the classical scheme. In order to corroborate further, we present the zoomed portions of the solutions near the discontinuities in Figure~\ref{fig:zoom_1d_shock-tube_eps-04}. We note that there are almost no oscillations in the density and the potential profiles for the present scheme, whereas the oscillations present in the velocity are far less compared to the classical scheme. 

\begin{figure}[htbp]
    \centering
    \includegraphics[height=0.19\textheight]{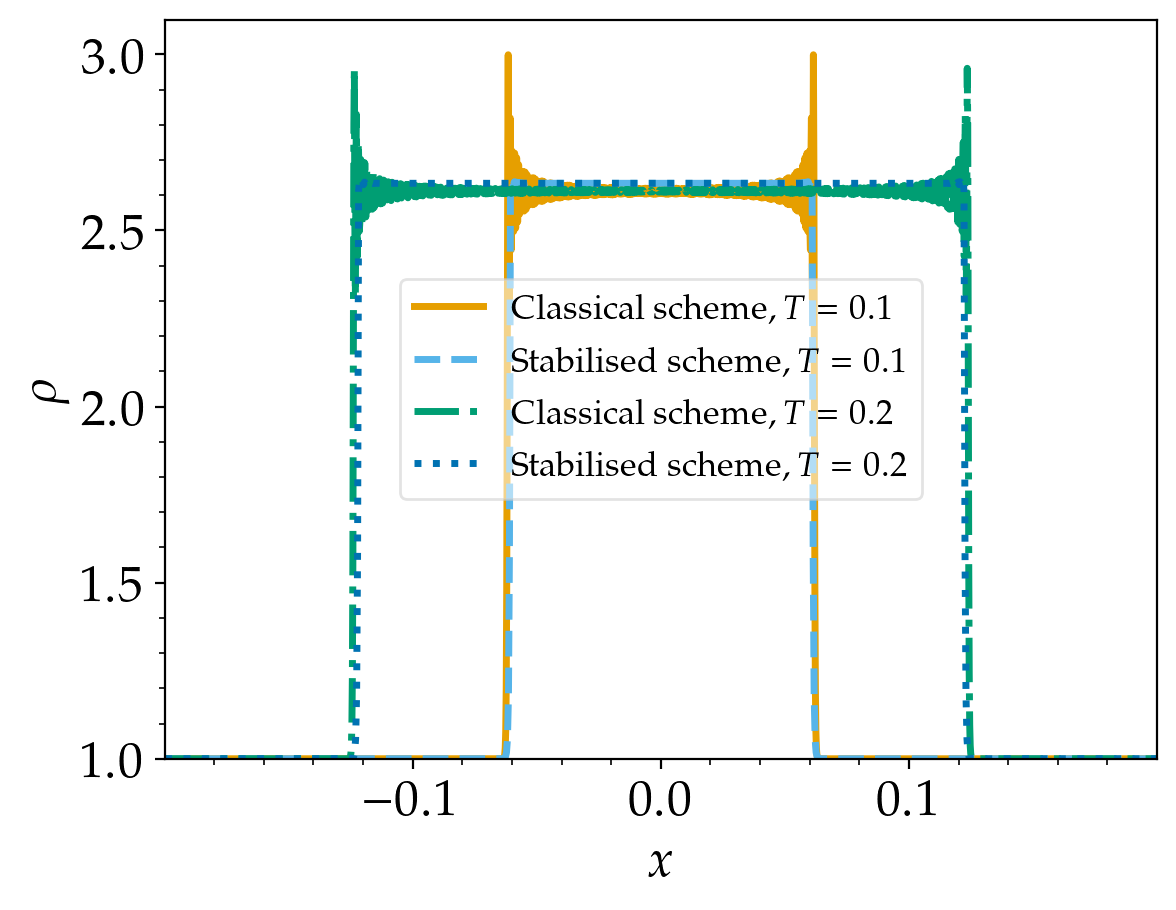}
    \includegraphics[height=0.19\textheight]{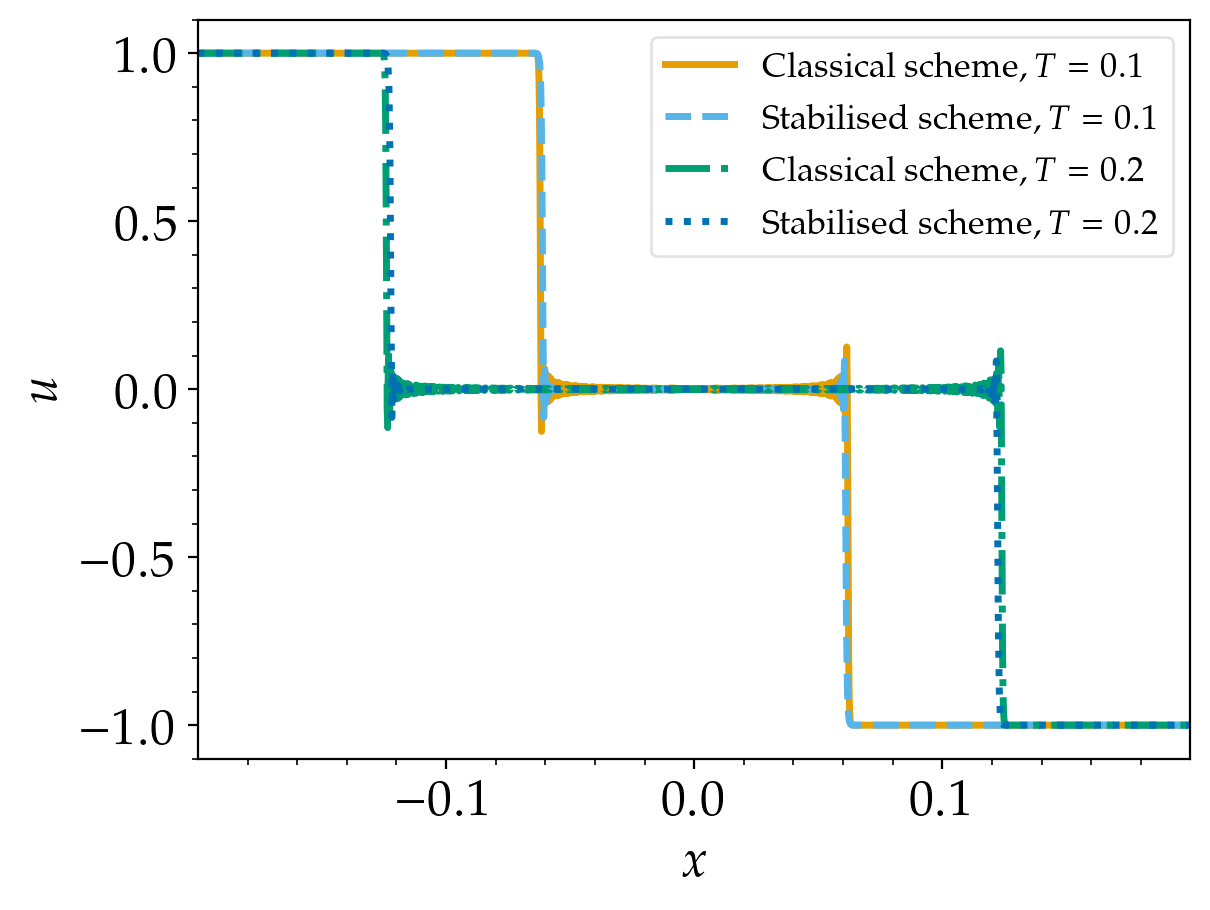}
    \includegraphics[height=0.19\textheight]{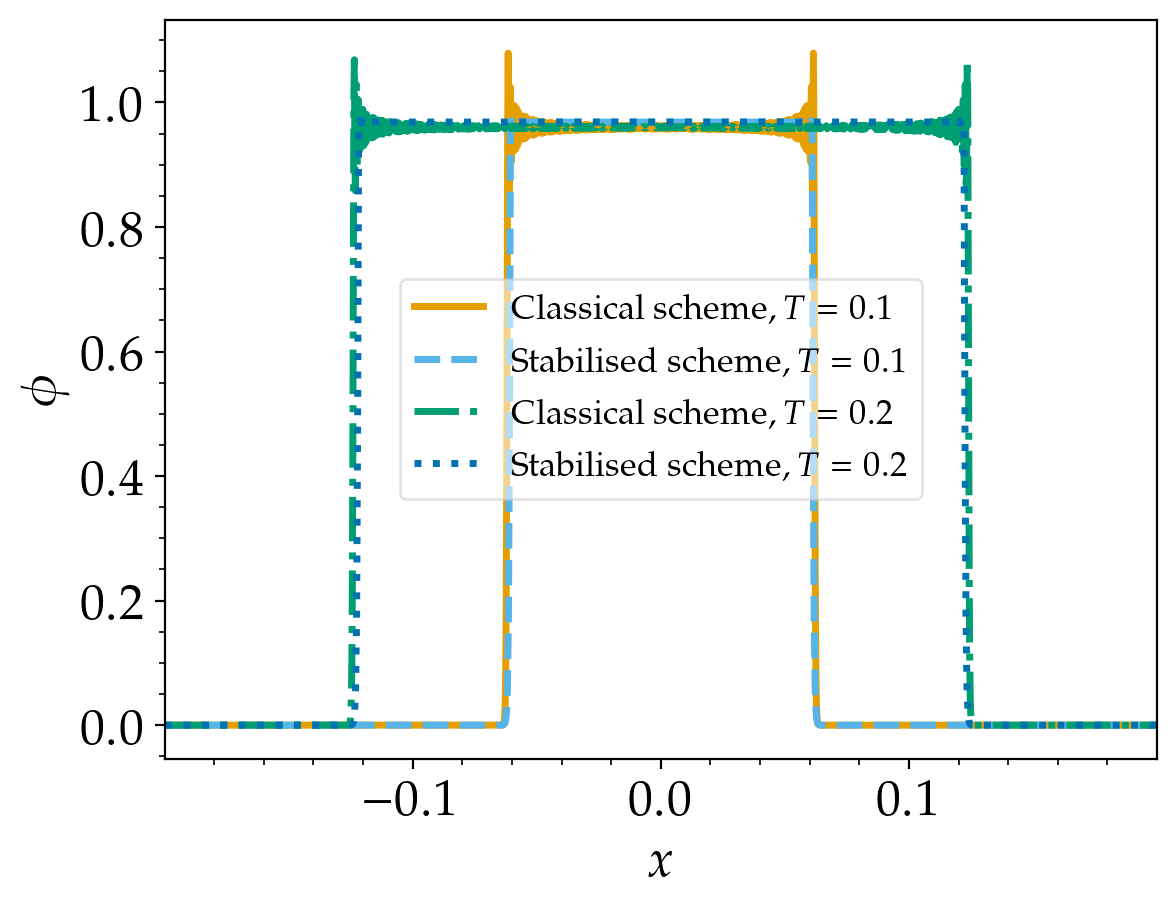}
    \caption{The density, velocity and potential profiles for the shock tube test case for different times corresponding to $\veps=10^{-4}$.} 
    \label{fig:1d_shock-tube_eps-04}
\end{figure}

\begin{figure}[htbp]
    \centering
    \includegraphics[height=0.19\textheight]{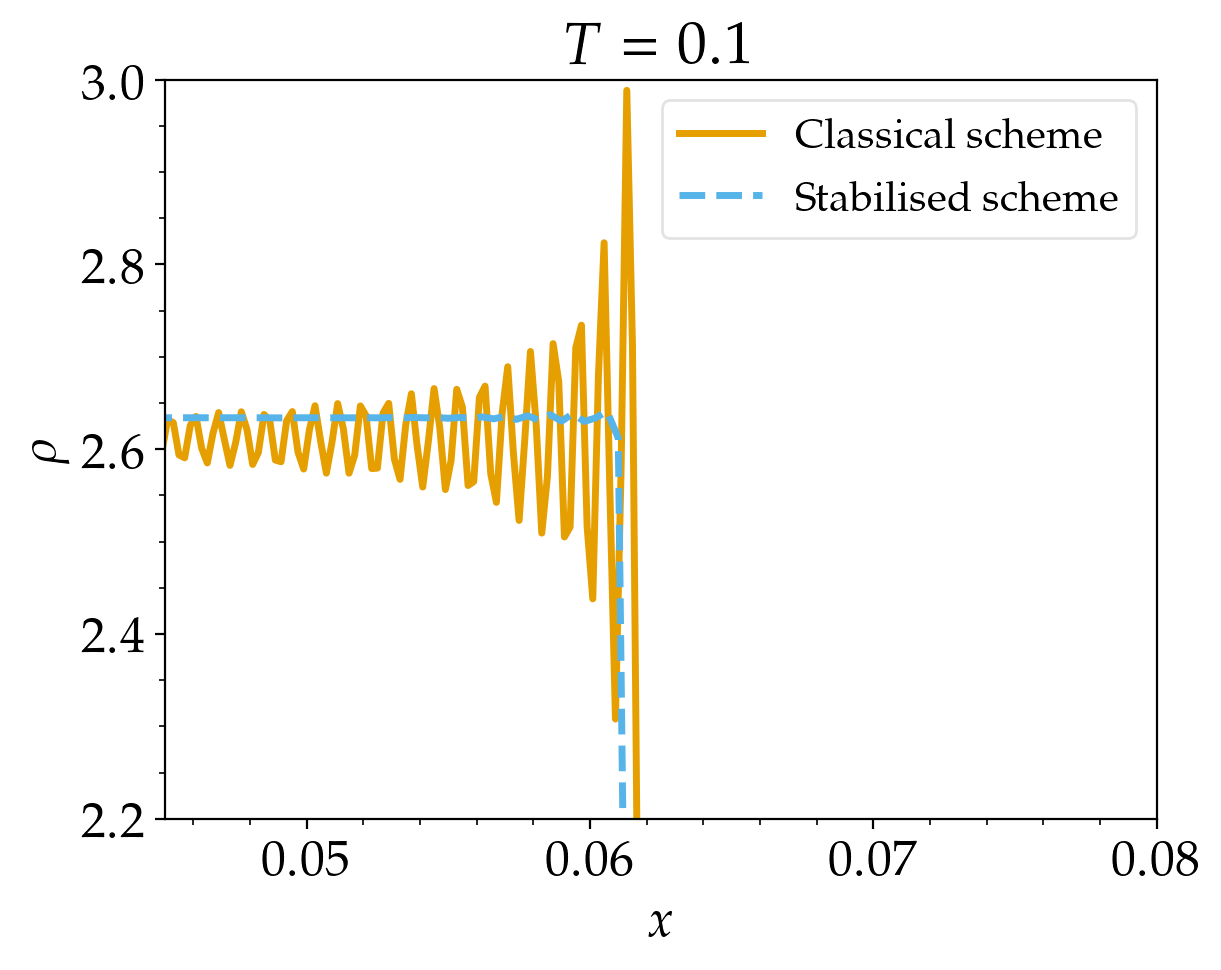}
    \includegraphics[height=0.19\textheight]{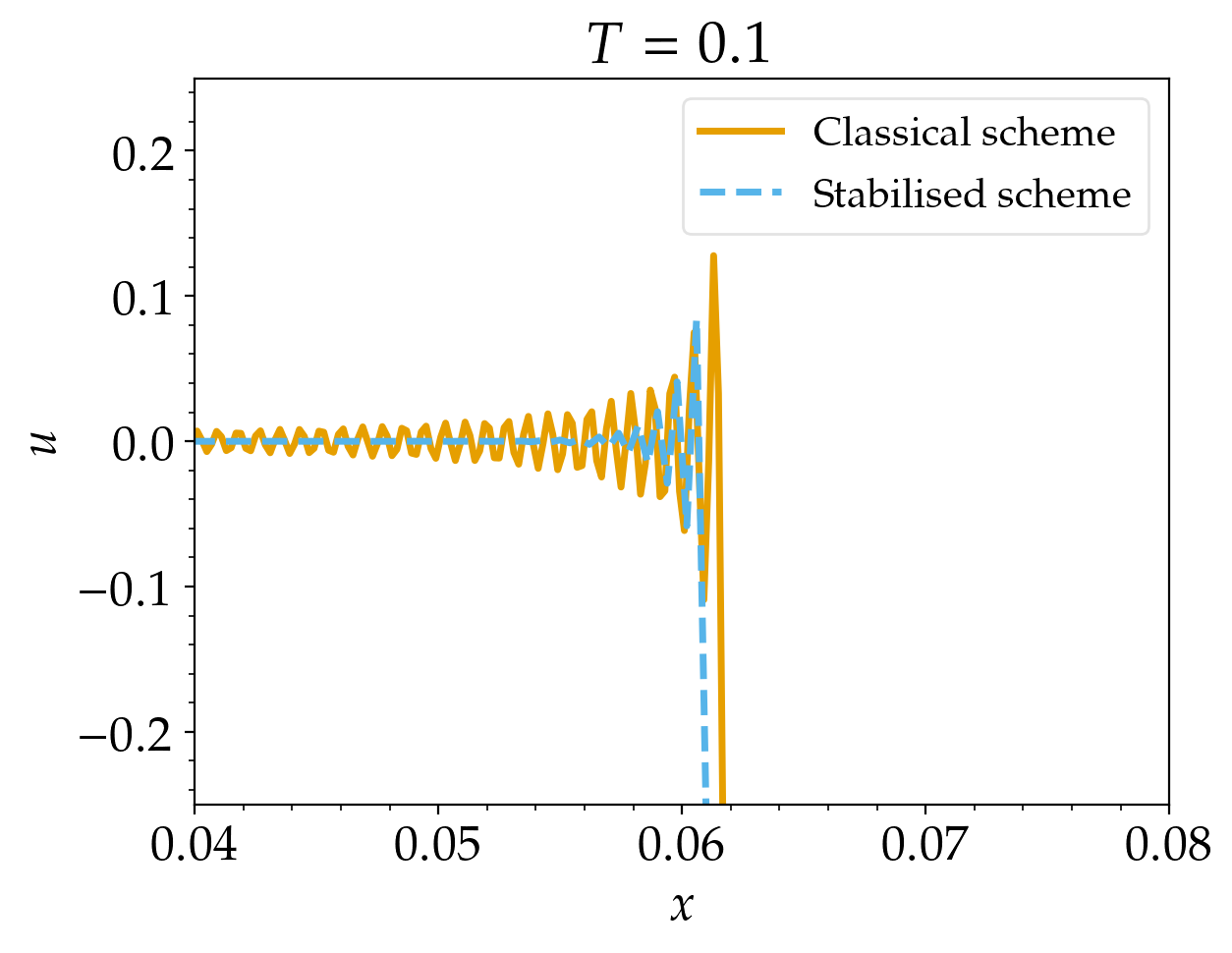}
    \includegraphics[height=0.19\textheight]{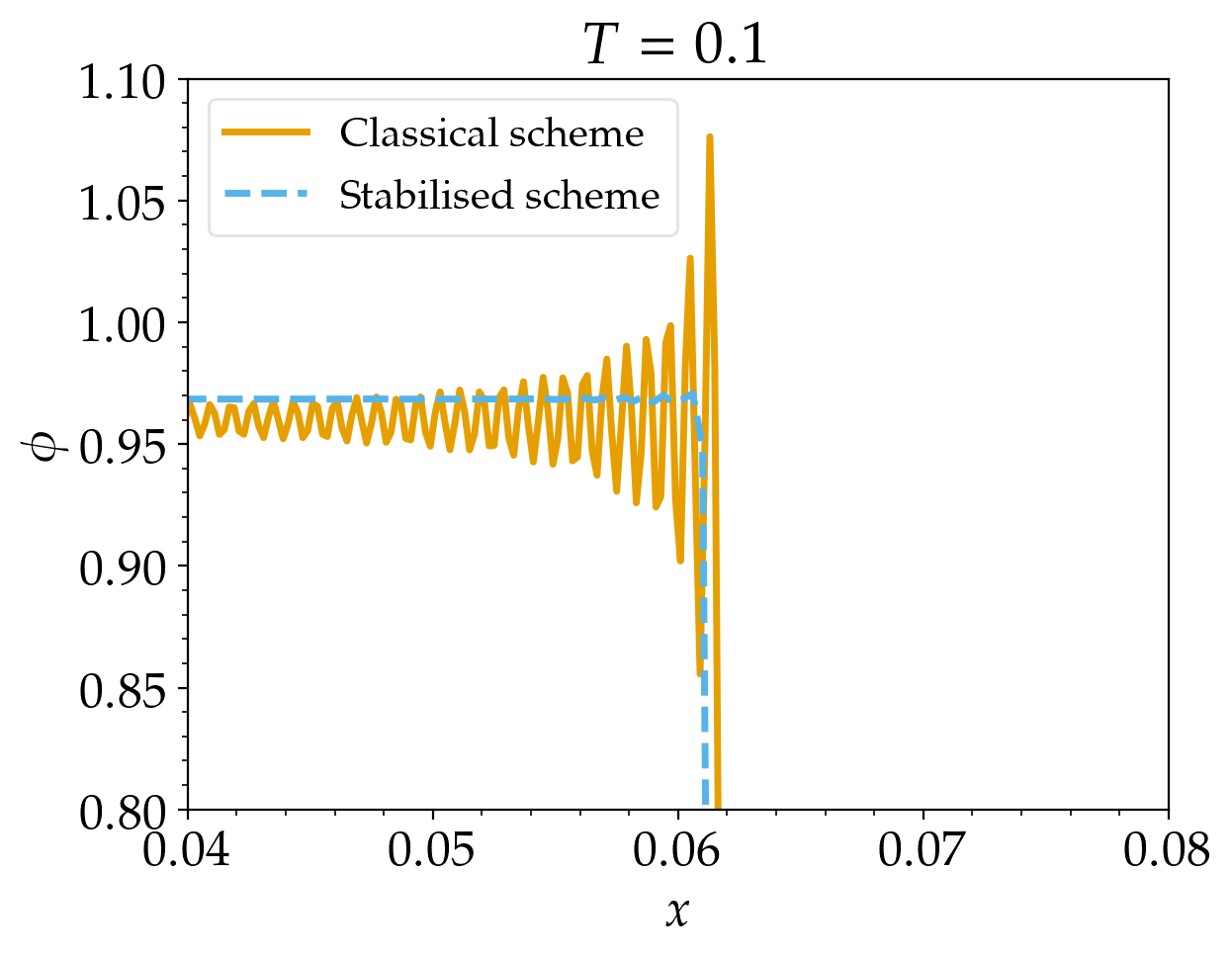}
    \caption{Zoom of the density, velocity and potential profiles for the shock tube test case at $T=0.1$ corresponding to $\veps=10^{-4}$.} 
    \label{fig:zoom_1d_shock-tube_eps-04}
\end{figure}

\subsection{1D Plasma Expansion}
\label{sec:1D-plasma_expansion}
The expansion of a plasma into vacuum is widely used in plasma dynamics, particularly in fields such as space science, laser ablation and electric propulsion among others. We consider the following classical 1D plasma expansion test problem from  \cite{AP14}. The initial setup consists of a smooth ion front at rest which then starts moving to the right under the influence of an electric field. The initial data are given by
\begin{equation}
    \rho(0, x) = 0.5 -\frac{1}{\pi}\arctan(\pi x),\; u(0, x) = 0.
\end{equation}
The computational domain $[-10, 40]$ is divided using $10000$ mesh points. We assign a small value $\veps = 10^{-2}$ for the Debye length. We set homogeneous Neumann boundary conditions for the density and the potential on both sides and a no-slip boundary condition on the left and a Neumann condition on the right for the velocity. In Figure \ref{fig:1d_plasma-expn}, we plot the ion density, the electron density given by the Boltzmann relation \eqref{eq:boltz_relation} after scaling, the velocity and the potential profiles. It is evident from the plots that the positive ions generate a peak in their density, at different times, at the ion front before the density drops to zero. The peaks are formed at those points where the ion and the electron densities are equal, and consequently the electric field is a maximum. The increased electric field results in the ion velocity exceeding the electron thermal velocity. The same trend is visible in the velocity plots where abrupt jumps can be seen at the points where the velocity attains maximum. The points of jump also correspond to a drastic fall in the potential, which creates a sharp increase in the electric field. This behaviour could result in wave breaking and the formation of multi-valued ion velocity distributions. Our results are highly consistent with the theoretical and the numerical studies reported in the literature; see, e.g.\ \cite{Mor03,SS85}. As reported in these references, we also observe that the peak intensity increases with mesh refinement though we have not displayed those results because of space constraints. 

\begin{figure}[htbp]
    \centering
    \includegraphics[height=0.25\textheight]{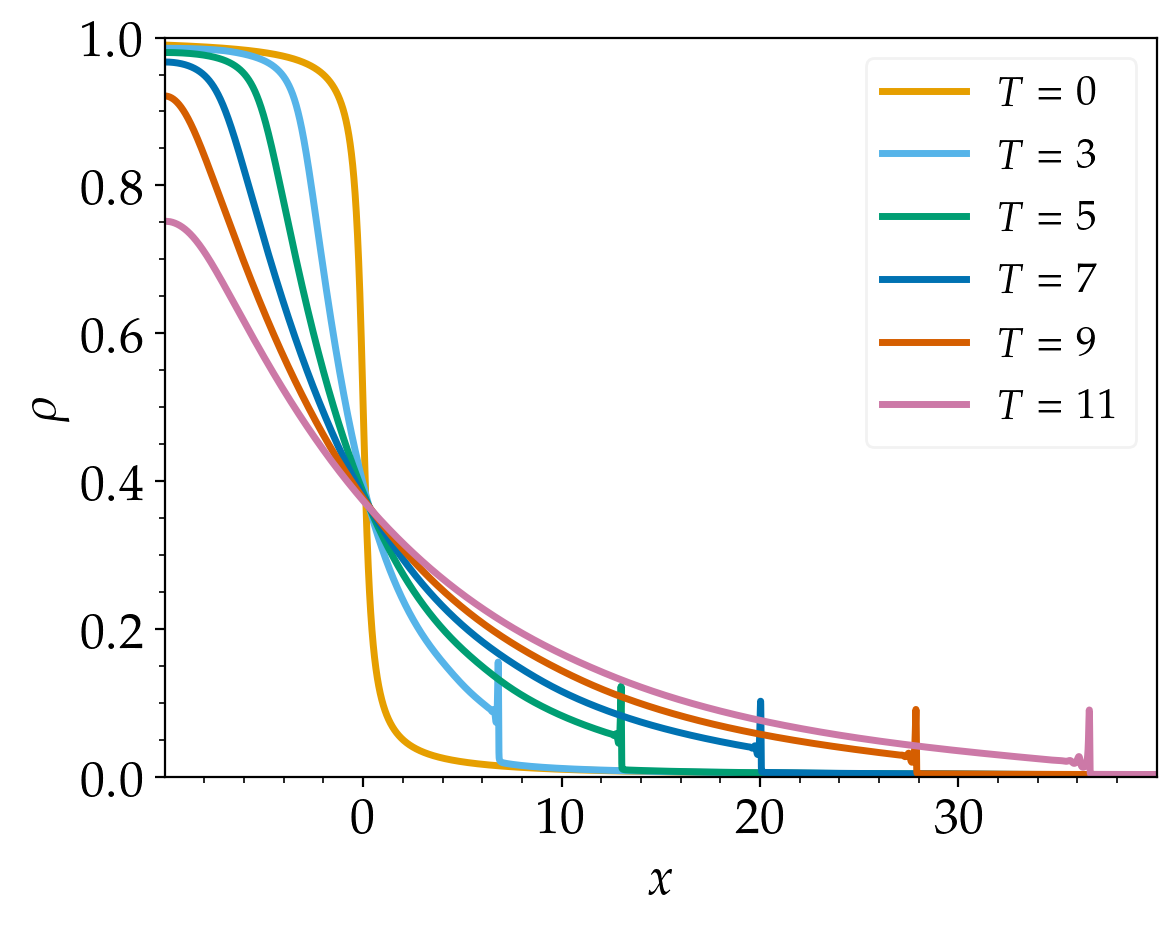}
    \includegraphics[height=0.25\textheight]{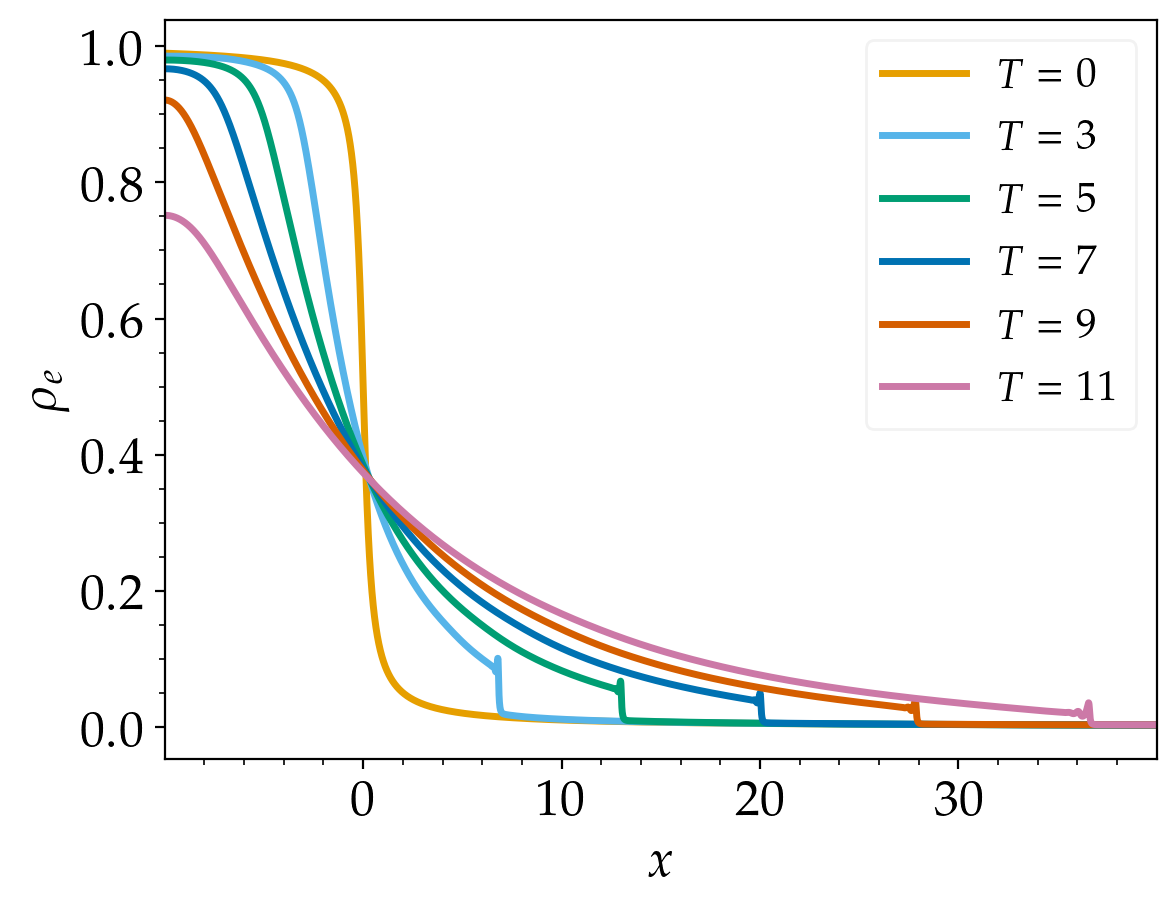}
    \includegraphics[height=0.25\textheight]{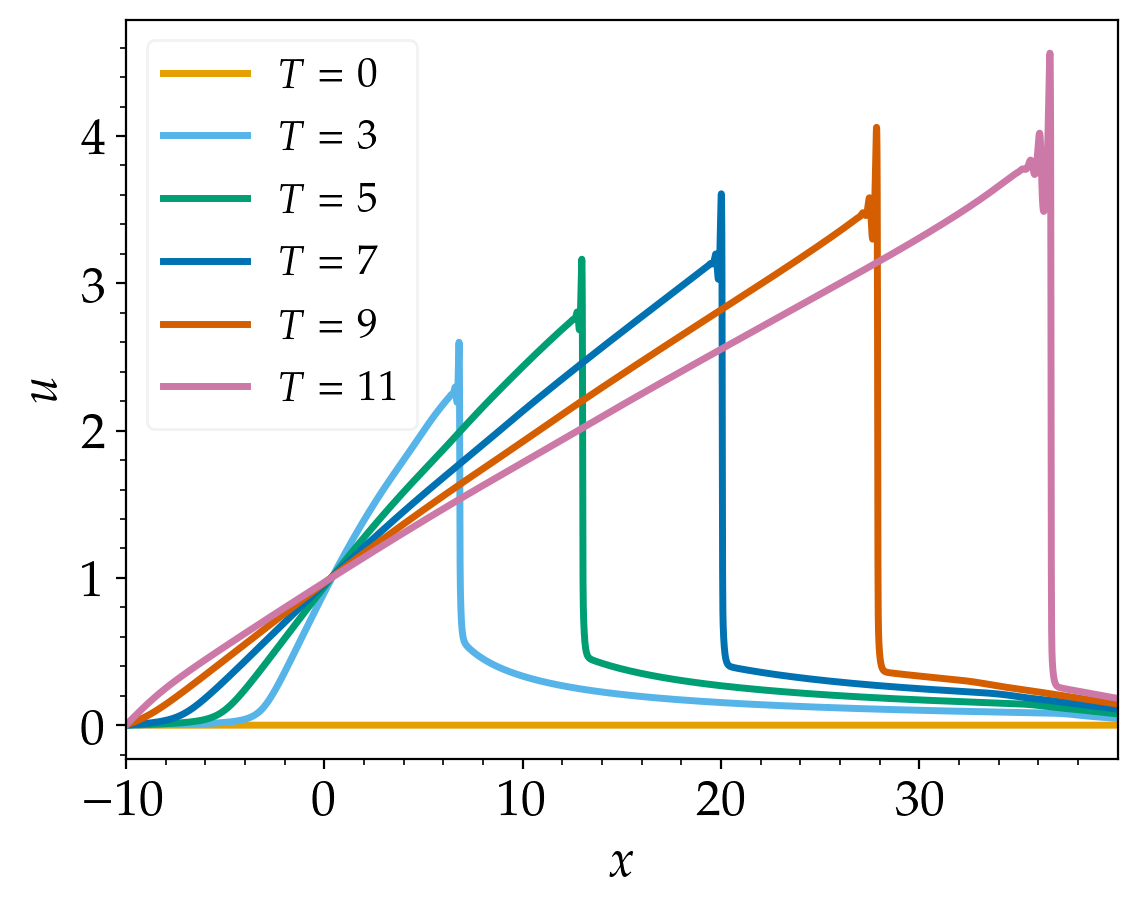}
    \includegraphics[height=0.25\textheight]{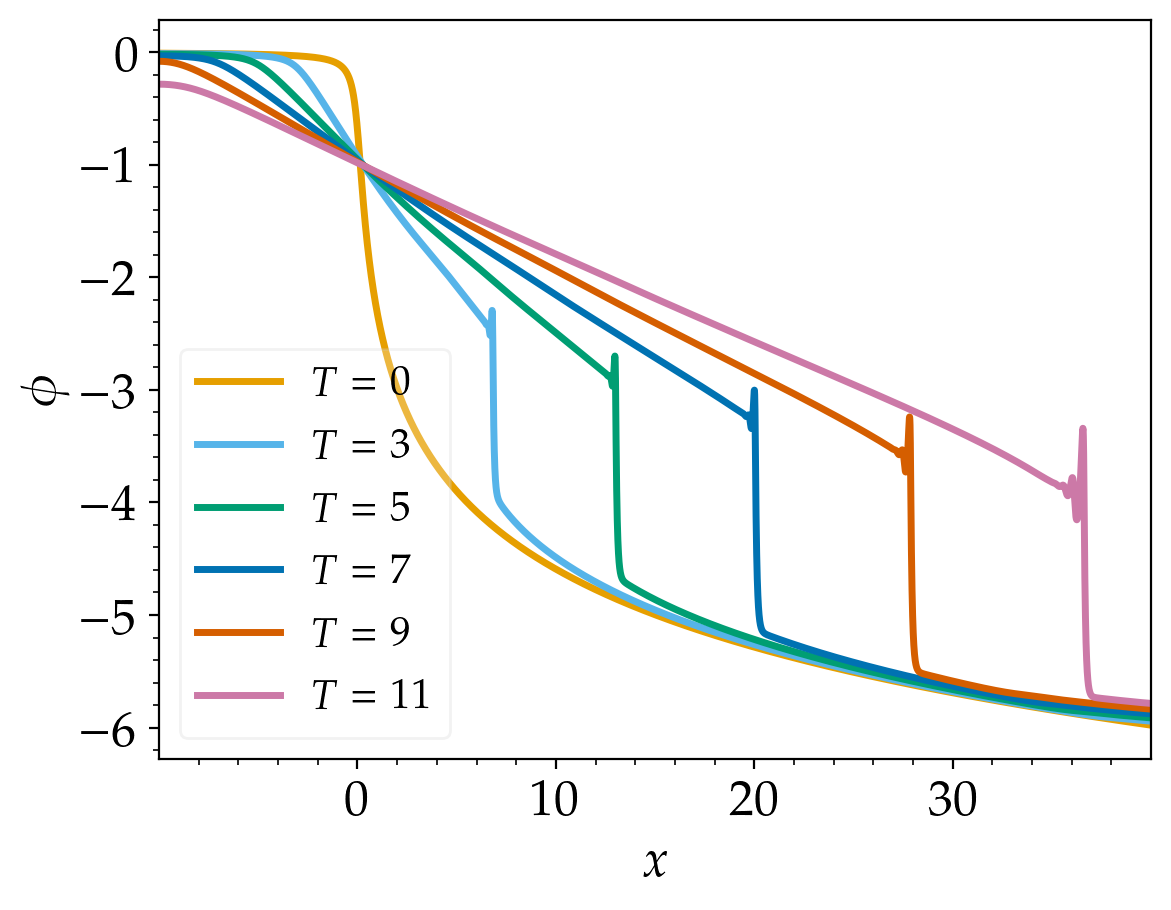}
    \caption{The ion and the electron densities, the velocity and the potential profiles for the plasma expansion test case.} 
    \label{fig:1d_plasma-expn}
\end{figure}

\subsection{Riemann Problem}
\label{sec:riemann_test}

In this test problem we intent to test the scheme's capability to capture the quasineutral limit regime for a 1D Riemann problem considered in \cite{PHG+13}. The initial density and velocity read
\begin{equation}
    \rho (0, x) = \begin{cases}
        1, & \text{if } x < 0,\\
        n_r, & \text{if } x \geqslant 0,
    \end{cases} 
    \quad u(0, x) = 0.
\end{equation}

Corresponding to this initial data, a self-similar solution of the ICE system can be computed as 
\begin{equation}
\label{eq:iso_exact_soln}
    [\rho(t, x), u(t, x)]^{T} = 
    \begin{cases}
        [1,0]^T, & \text{if } x \leqslant -t, \\
        [\exp(-\frac{x}{t}-1),\frac{x}{t}+1]^T, & \text{if }  -t < x \leqslant (u_m - 1)t, \\
       [\exp(-u_m),u_m]^T, & \text{if }  (u_m - 1)t < x \leqslant u_s t, \\
        [n_r,0]^T, & \text{if } x > u_st,
    \end{cases}
\end{equation}
where the maximum velocity $u_m$ is obtained by solving the equation
\begin{equation}
    \left(1 - n_r \exp(u_m)\right)\left(u^2_m - 2u_m - 2 \log(n_r)\right) - 2u^2_m = 0
\end{equation}
and the shock velocity $u_s$ is given by
\begin{equation}
    u_s = \frac{u_m}{1 - n_r \exp(u_m)}.
\end{equation}
We expect the numerical solution obtained using the AP scheme to match this self-similar solution when $\veps\to0$. To this end, we set $\veps=10^{-4}$ in order to emulate a quasineutral regime. The parameter $n_r$ in the initial density is given the values $0.5,0.75$ and $0.95$ to vary the strength of the discontinuity. We take a computational domain $[-80, 100]$ which is discretised using $9000$ mesh points. Extrapolation, no-slip, and homogeneous Neumann boundary conditions have been applied at both ends for the density, velocity and potential respectively. The simulation runs up to a final time $T=50$.

\begin{figure}[htbp]
    \centering
    \includegraphics[height=0.24\textheight]{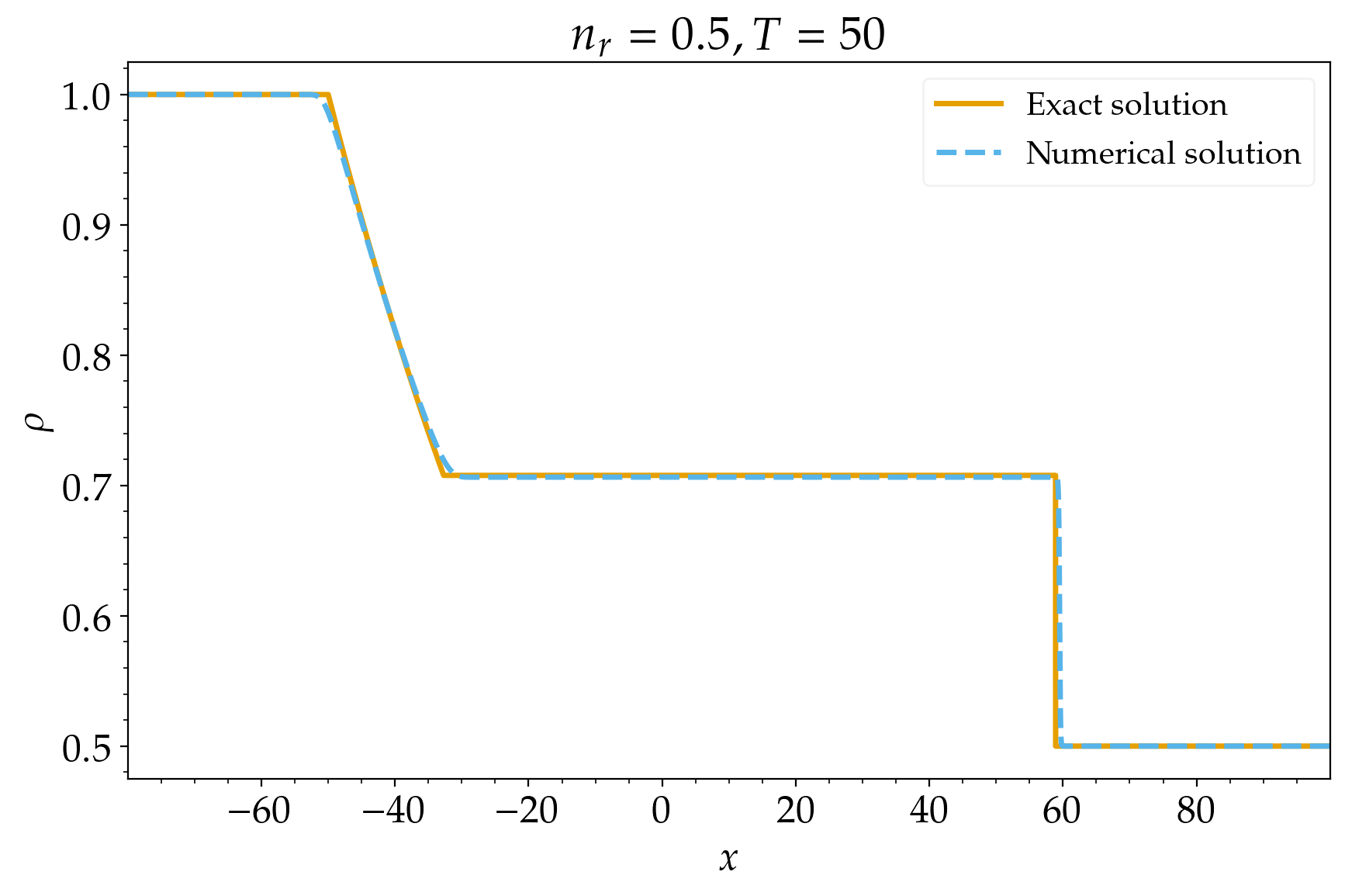}
    \includegraphics[height=0.24\textheight]{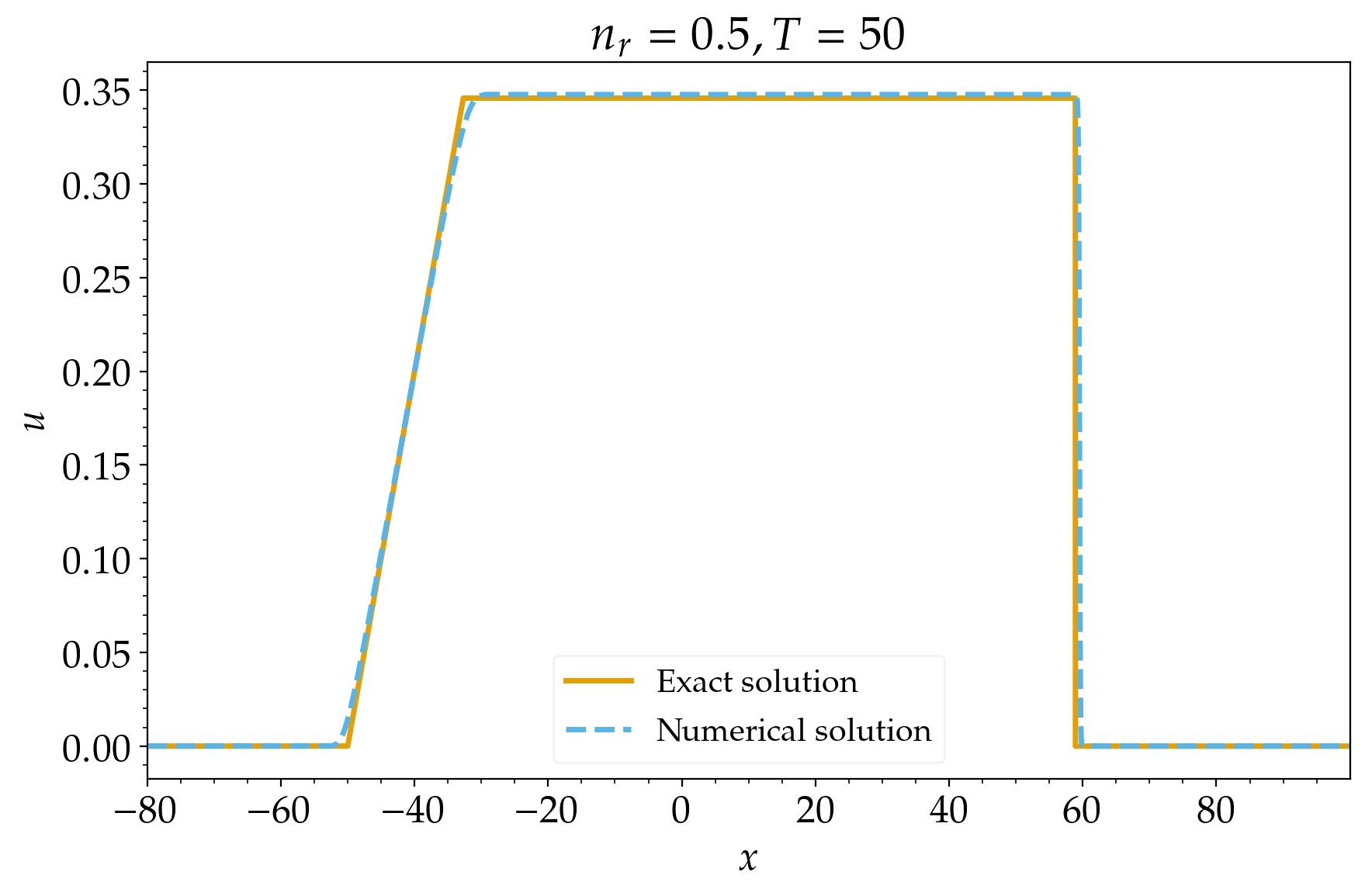}
    \includegraphics[height=0.24\textheight]{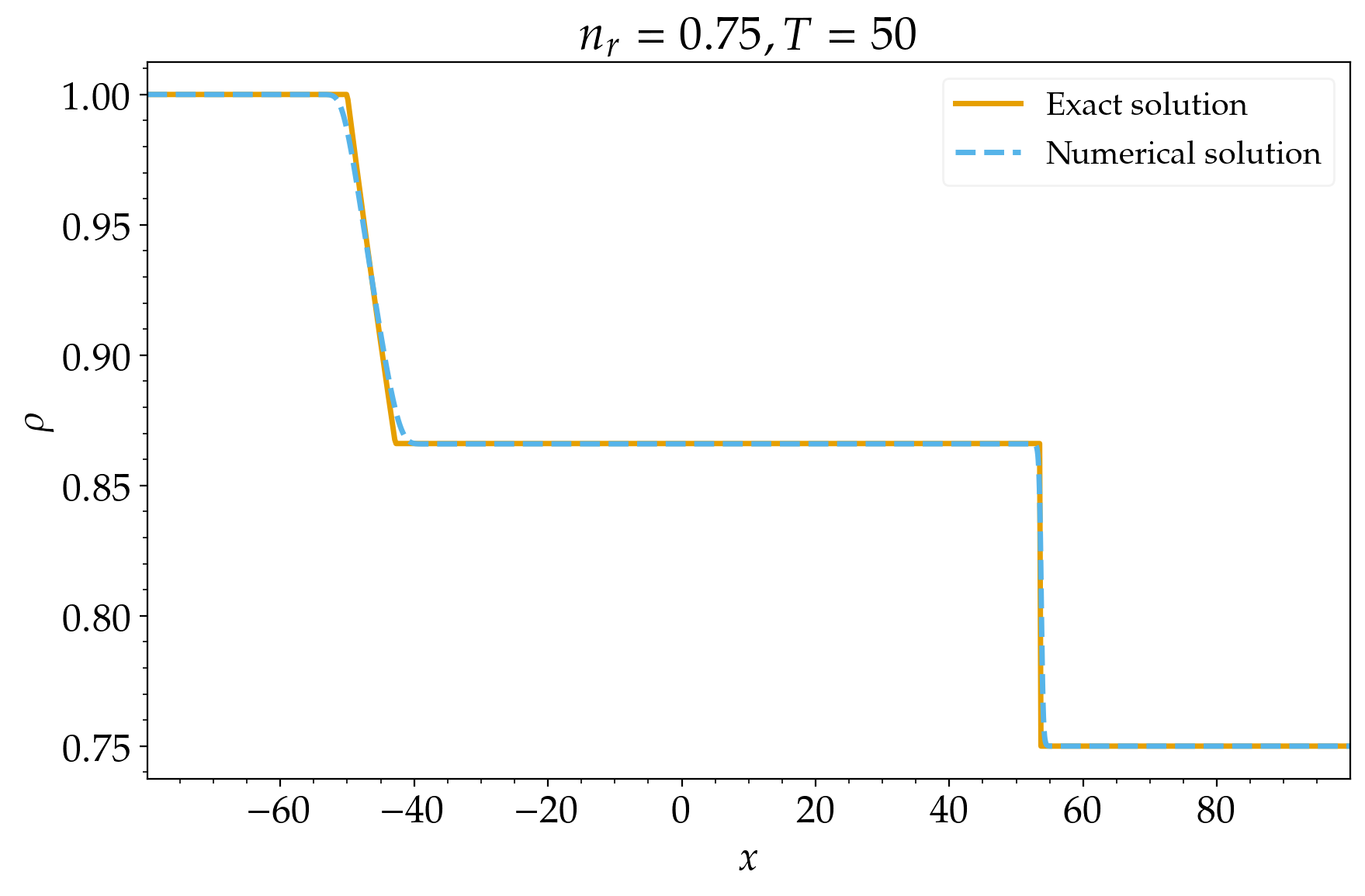}
    \includegraphics[height=0.24\textheight]{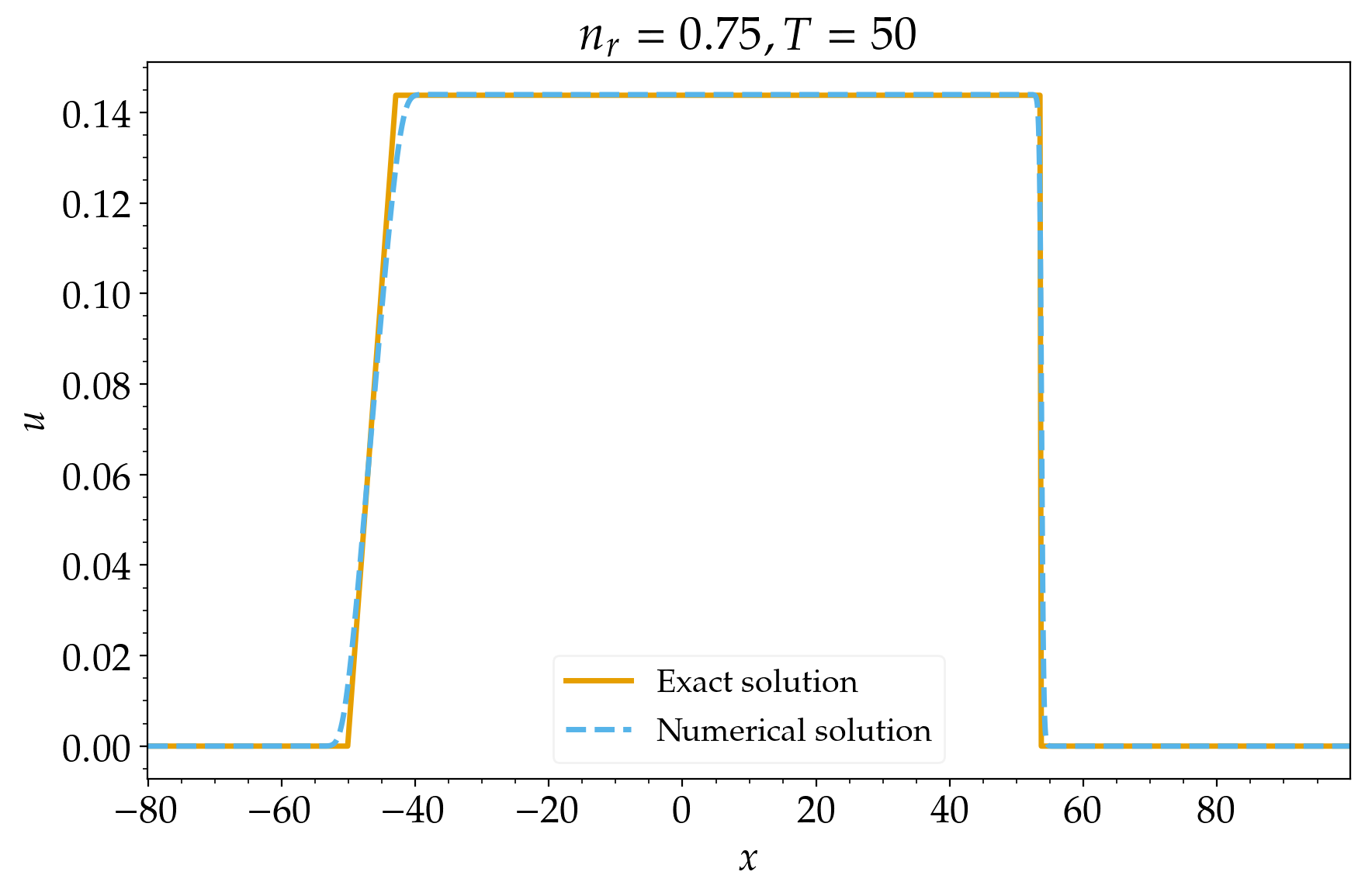}
    \includegraphics[height=0.24\textheight]{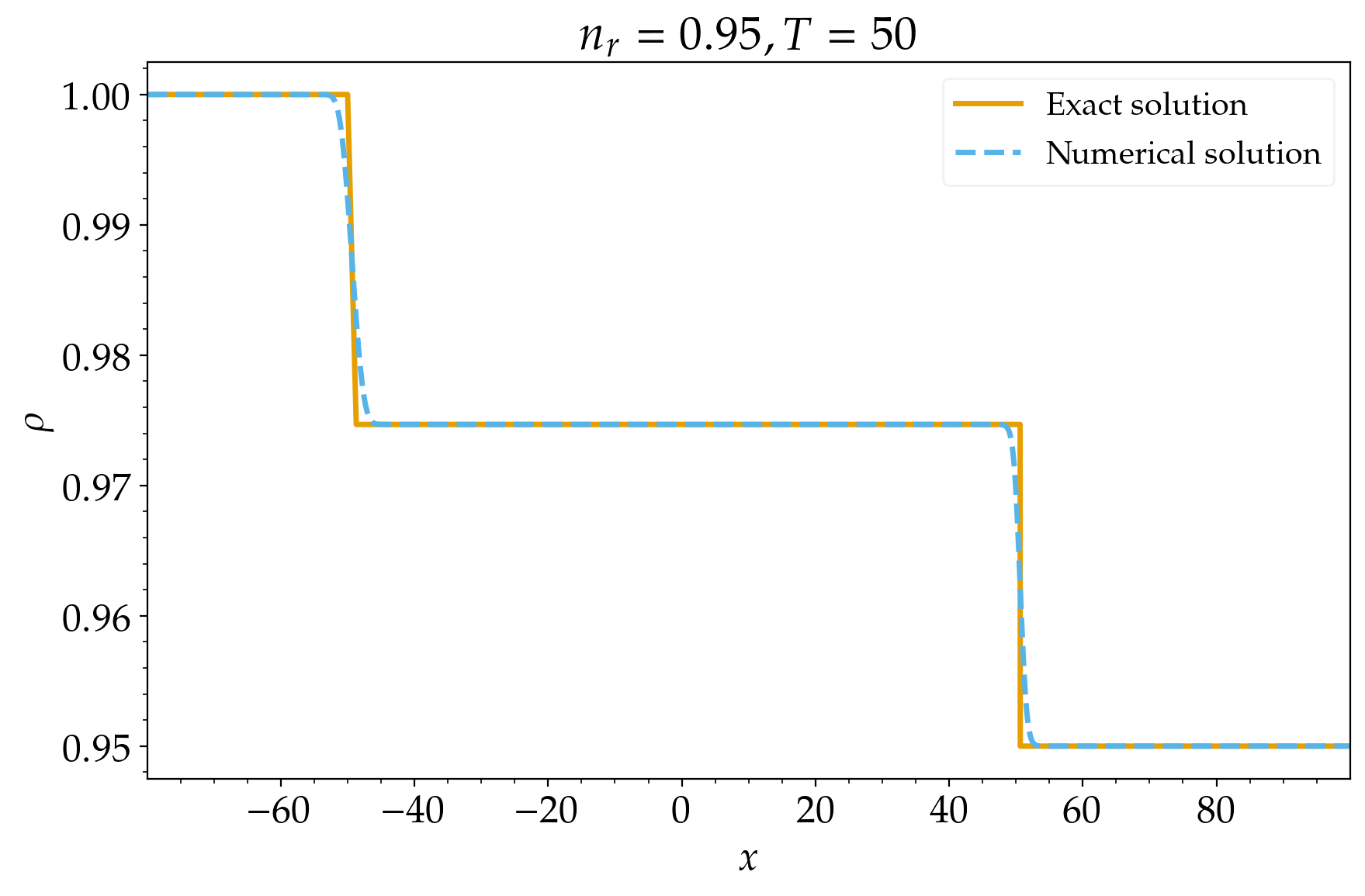}
    \includegraphics[height=0.24\textheight]{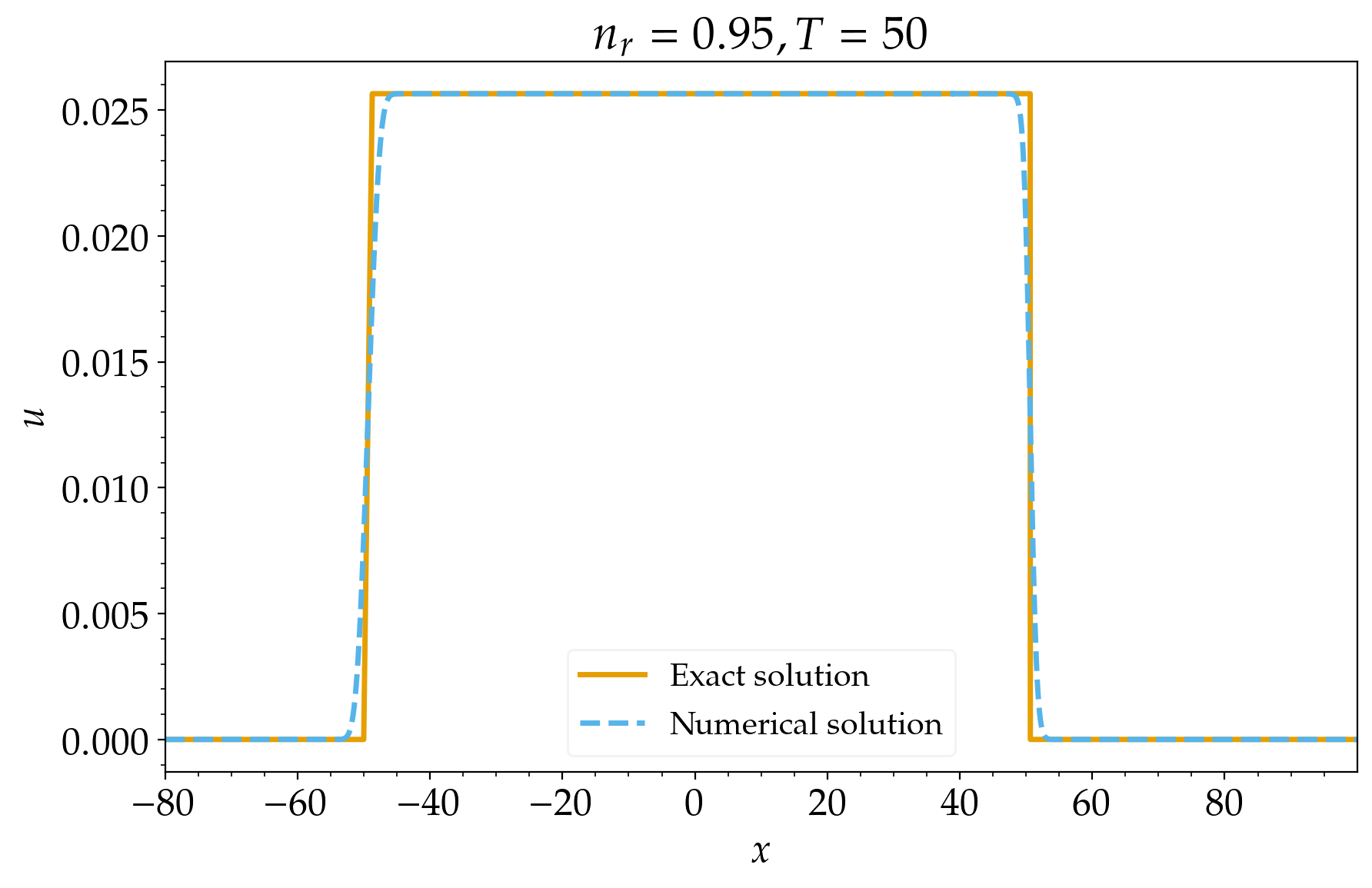}
    \caption{ The density, velocity profiles for the one-dimensional Riemann problem at time $T=50$ for $\veps=10^{-4}$.} 
    \label{fig:1d_riemann}
\end{figure}

In Figure~\ref{fig:1d_riemann}, we present the density and velocity plots for different values of $n_r$ against the exact solution \eqref{eq:iso_exact_soln}. We observe that the numerical solution obtained using the present semi-implicit scheme and the exact solution of ICE system are indistinguishable irrespective of the values of $n_r$, i.e.\ we obtain excellent results for different shock strengths. The shock position and shock speed are also in exact agreement with no oscillations in the neighbourhood of shocks. It is clear therefore that the scheme is indeed able to capture the quasineutral ICE solution as $\veps\rightarrow 0$, which corroborates the AP property. 

\subsection{Cylindrical Explosion}
\label{sec:cyl_expl}
Here we consider an axi-symmetric 2D plasma explosion problem. This case study showcases the shock capturing capability of the scheme in the quasineutral regime. The computational domain is $\Omega=[-1,1]\times[-1,1]$ and the initial condition is given by 
\begin{equation}
    \rho(0, x, y) = 
    \begin{cases}
        1, & \text{if } r \leqslant \frac{1}{2}, \\
        0.1, & \text{otherwise, }
    \end{cases}
    \, u(0, x, y) = v(0, x, y)=0.
\end{equation}
where $r=\sqrt{x^2+y^2}$. The problem is supplemented with no-flux boundary conditions for all the sides for the density and the potential, no-slip boundary for the velocities. The domain is divided uniformly by a $200\times 200$ mesh. We simulate this problem in a quasineutral regime by setting a very small value $\veps=10^{-4}$.

\begin{figure}[htbp]
    \centering
    \includegraphics[height=0.3\textheight]{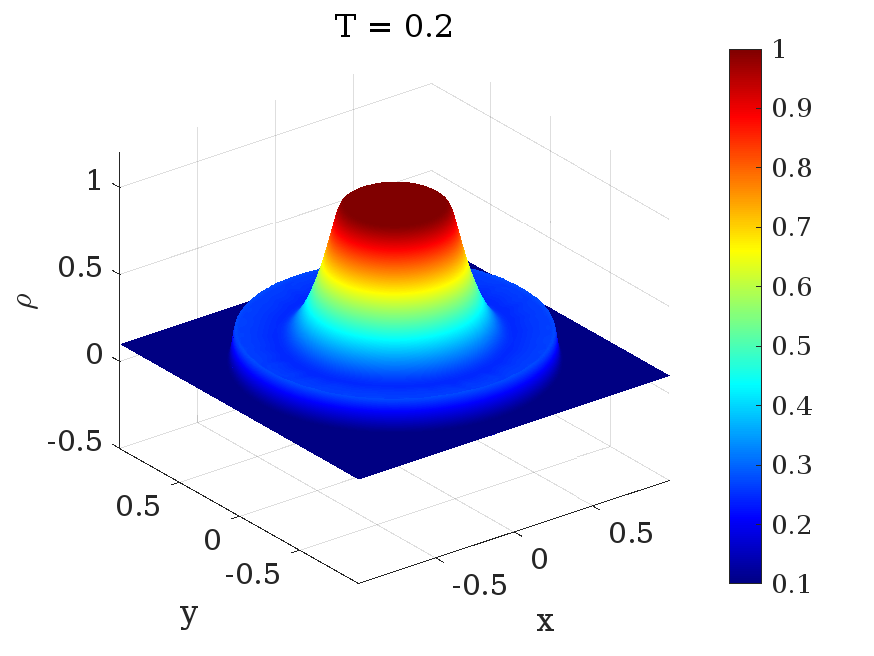}
    \caption{The density plot for cylindrical explosion problem at time $T=0.2$ for $\veps=10^{-4}$.} 
     \label{fig:den_cylExp}
\end{figure}

In Figure~\ref{fig:den_cylExp}, we present the surface plot of the density at $T=0.2$. Clearly, we observe that the plasma cloud with high density inside the circle of radius $0.5$ spreads out and creates a circular shock wave. In order to further elucidate the results, in Figure~\ref{fig:1d_cut_cylExp}, we compare the cross-sections of the density and the velocity along the radial direction with a reference solution obtained from the ICE system at $T=0.2$. The plots clearly show that the shock and expansion and well captured and that the numerical solution is in excellent agreement with the limiting solution.

\begin{figure}[htbp]
    \centering
    \includegraphics[height=0.3\textheight]{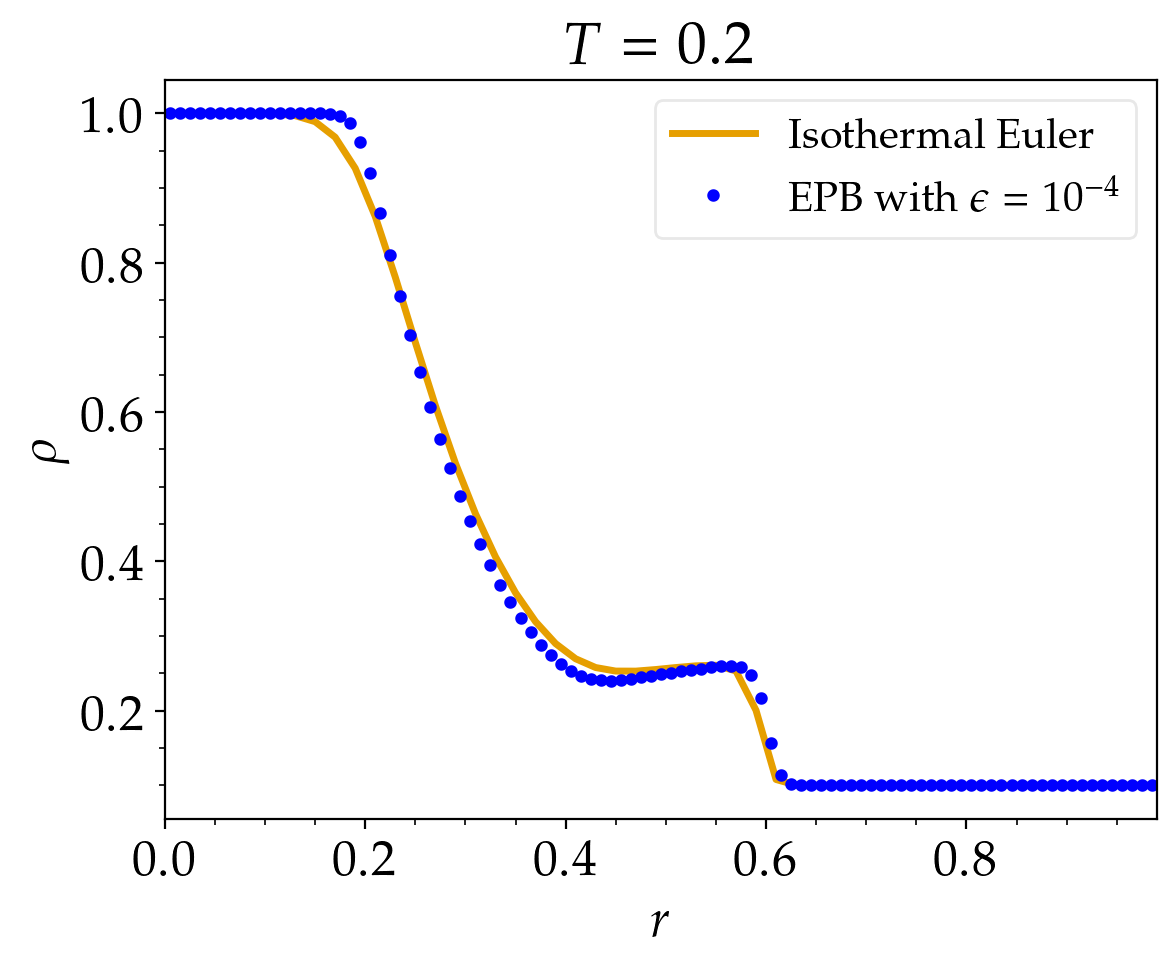}
    \includegraphics[height=0.3\textheight]{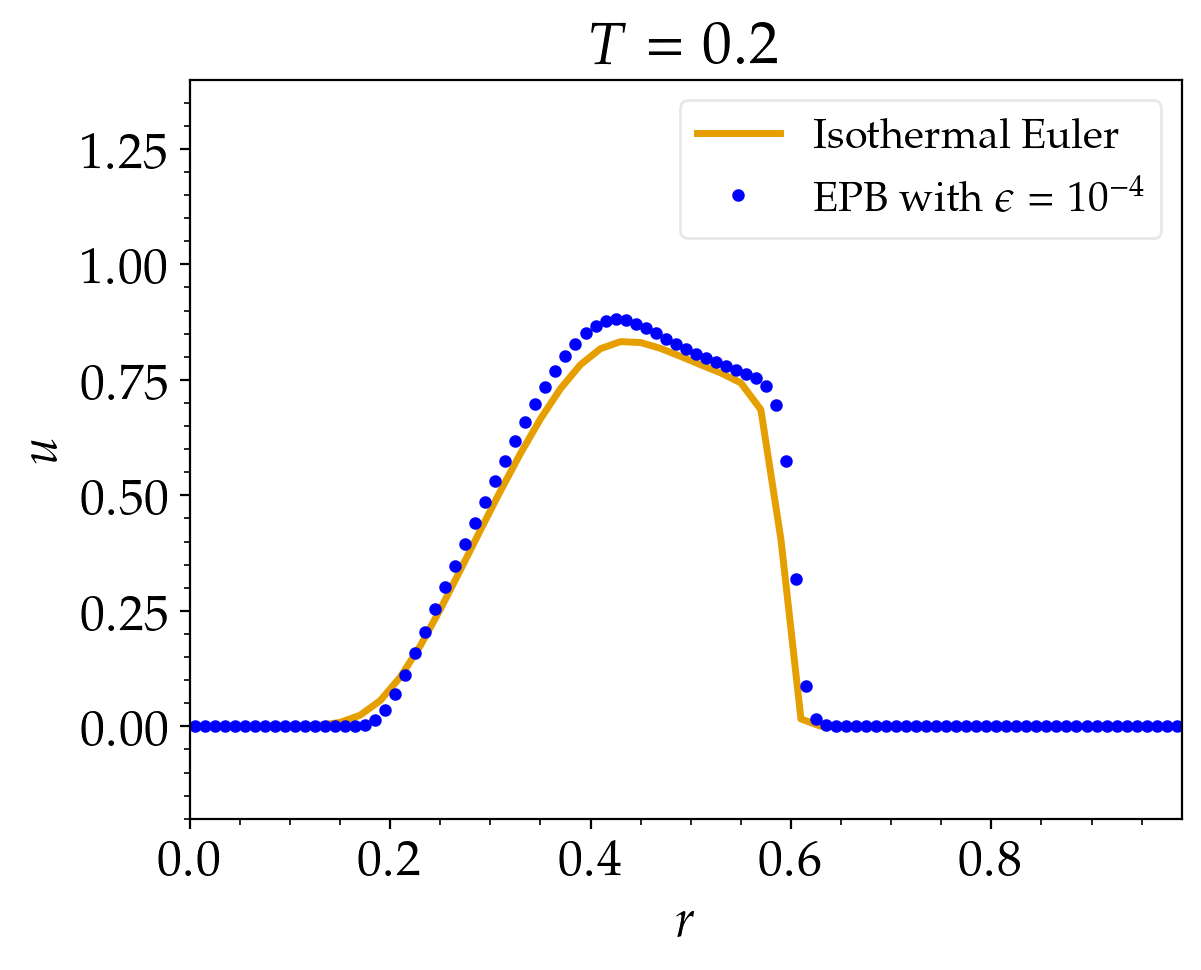}
    \caption{ 1D radial cut of Density and velocity at time $T=0.2$.} 
    \label{fig:1d_cut_cylExp}
\end{figure}

\subsection{Ion Extraction Problems}
The next case study involves the ion extraction from plasmas which has earned significant attention over the years due to its relevance in various applications including particle implantation processes and the generation of intense particle beams. We consider a benchmark test problem from \cite{MG05,VCB92}, which involves 1D and 2D hydrodynamics of ion extraction from a uniform quasineutral plasma in an electrostatic field using two parallel plates. The ions are extracted from a drifting, high density, grounded plasma onto a negatively charged electrode. The extracted ions form a low density sheath which shields the quasineutral plasma from the electrode voltage. The thickness of the sheath is of the order of a few Debye lengths and accordingly, the ion extraction problems demand a spatial grid resolution smaller than the Debye length $\veps$ to resolve the sheath. In what follows, we consider both 1D and 2D configurations of ion extraction motivated by the studies from \cite{MG05,VCB92}.    

\subsubsection{1D Problem}
\label{sec:1D-sheath}
In the 1D setting, we show that the present scheme is able to capture the physics of classical plasma sheath by computing the potential drop as predicted by the theory. To this end, we consider a 1D steady-state ion sheath problem studied in \cite{VCB92}. In order to replicate the initial vacuum state, we prescribe a very small initial density inside the computational domain $\Omega=[0,1]$ by setting
\begin{equation}
    \rho (0, x) = 10^{-5}, \, \text{ if } 0<x<1.
\end{equation}
An ion flux is injected at the left boundary $x = 0$ which corresponds to specifying $\rho(t,0)=5$ and $u(t,0)=1.25$ for $t\geqslant 0$. We take extrapolation boundary conditions at the right end for the density and velocity. Further, we choose a homogeneous Neumann boundary condition for the potential at the left end of the boundary and place an extractor at the right end by imposing a boundary condition of the form
\begin{equation}
    \phi (t, 1) = -500, \quad t\geqslant 0.
\end{equation}
The domain is discretised using $100$, $500$ and $1000$ grid points and the simulations are run up to a final time $T=2.5$ corresponding to $\veps=10^{-2}$.

\begin{figure}[htbp]
    \centering
    \includegraphics[height=0.191\textheight]{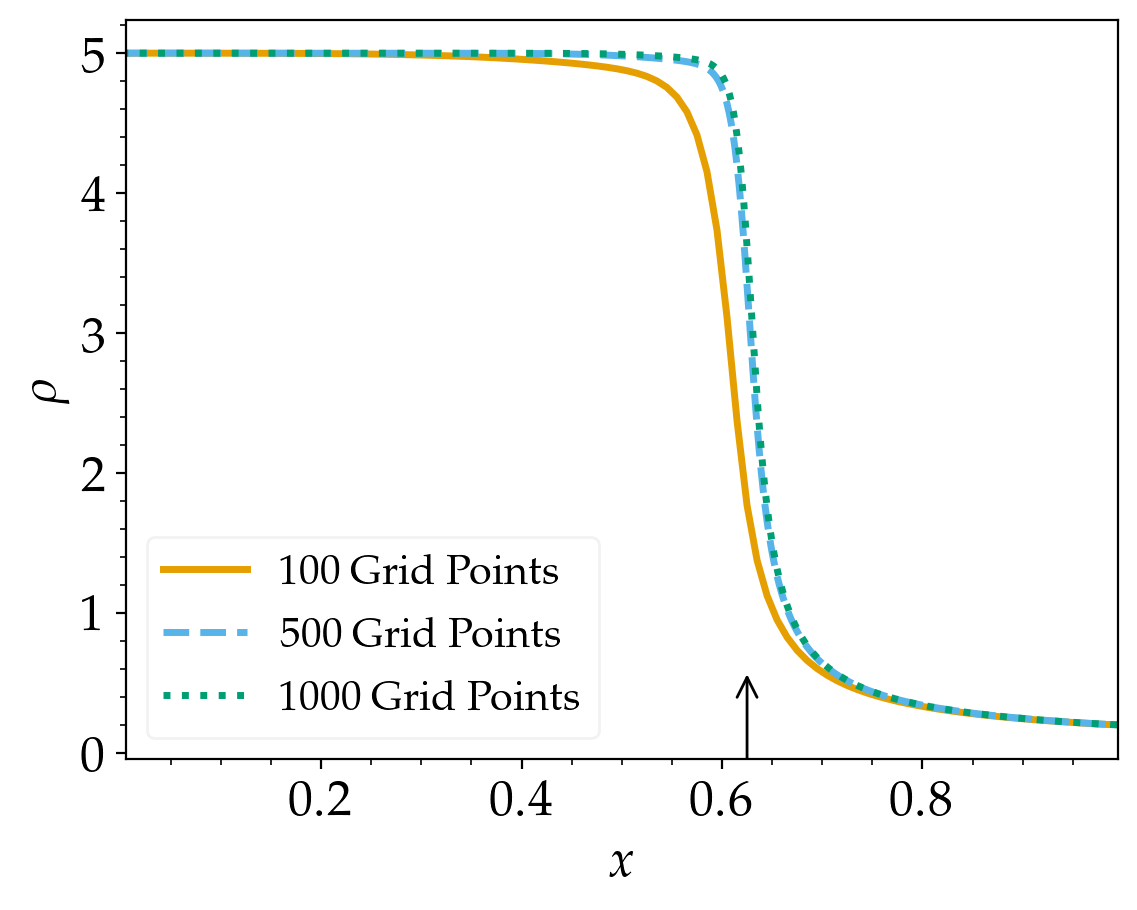}
    \includegraphics[height=0.191\textheight]{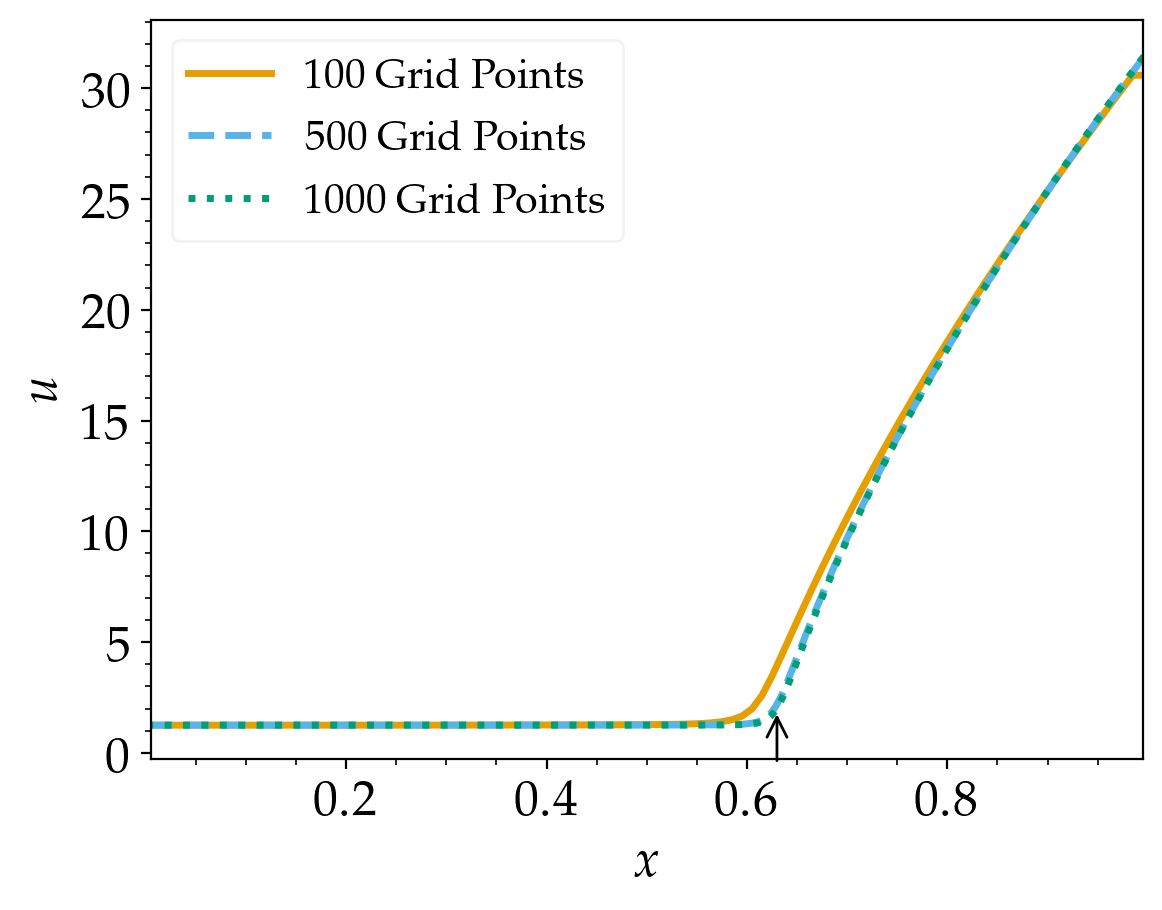}
    \includegraphics[height=0.191\textheight]{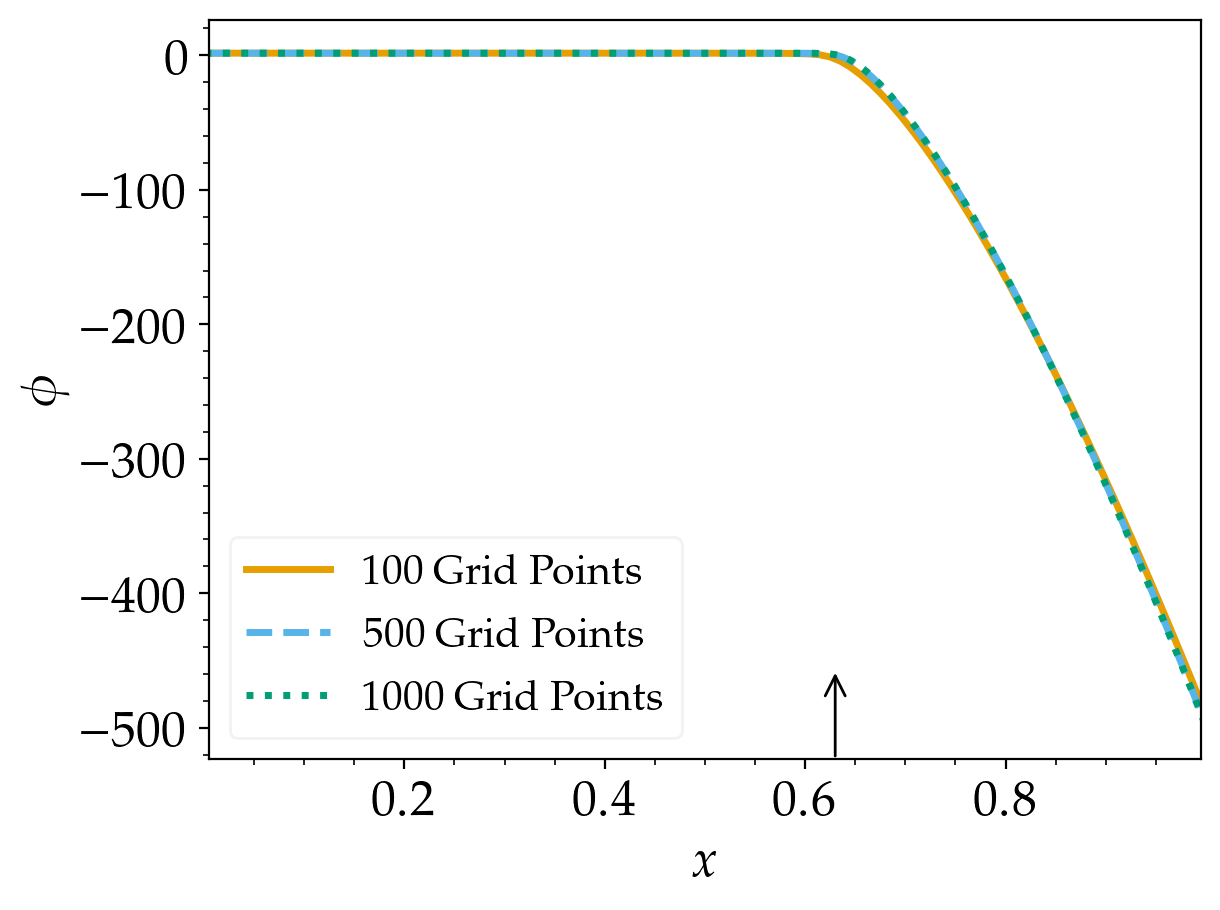}
    \caption{The density, velocity and potential profiles for the 1D sheath test case at time $T=2.5$ for $\veps=10^{-2}$.} 
    \label{fig:1d_sheath}
\end{figure}
We plot the density, velocity and potential profiles in Figure~\ref{fig:1d_sheath} for the different grid sizes. We clearly see the formation of plasma and the sheath region. The arrow in the figures represents the sheath transition region which forms at a distance $s$ from the extractor, where $s$ is given by the Child-Langmuir formula
\begin{equation}
    \label{eq:Child_Lang}
    s = \half\frac{2^{5/4}}{3}\frac{V^{3/4} \veps}{\sqrt{u}},
\end{equation}
with $V=500$ being the voltage difference and $u$, the initial velocity. Plugging in the values of the parameters, we obtain $s\approx 0.375$. We note that the location of the sheath transition region obtained using the present scheme and the location of the arrow determined by the equation \eqref{eq:Child_Lang} agree very well. Furthermore, we observe that there are no significant changes in the resulting sheath transition region when changing the grid, which shows the ability of the scheme to capture the sheath regions even on a coarse mesh. 

\subsubsection{2D Problem}
\label{sec:sheath}
We consider another benchmark test problem from \cite{MG05,VCB92}, which involves the 2D hydrodynamics of ion extraction from a uniform quasineutral plasma in an electrostatic field using two parallel plates. Figure~\ref{fig:schematic_extractor} portrays the computational domain, the initial density profile of the uniform plasma, the cathode plate and the boundaries of the plasma. It is assumed that the cathode is on the right side of the plasma, connected to a high voltage $\phi_c = -1000$ and the other side of the plasma has a free boundary. The top and bottom side electrodes are connected to a typical small voltage, $\phi_t = \phi_b = -10$. As shown in the schematic diagram, we have assumed that ion plasma with initial density $\rho(0, x, y) = 5$ is created near the free boundary at the beginning of the simulation. The initial velocity is taken as $u(0, x, y) = 0$ and $v(0, x, y) = 0.25$. We set the scaled Debye length $\veps= 0.18\times 10^{-2}$. The computational domain is divided uniformly into $80\times 200$ mesh cells. The boundary conditions on all four boundaries are extrapolations for the density. For both velocities, no-slip conditions are set on the left end and extrapolation on the remaining sides.

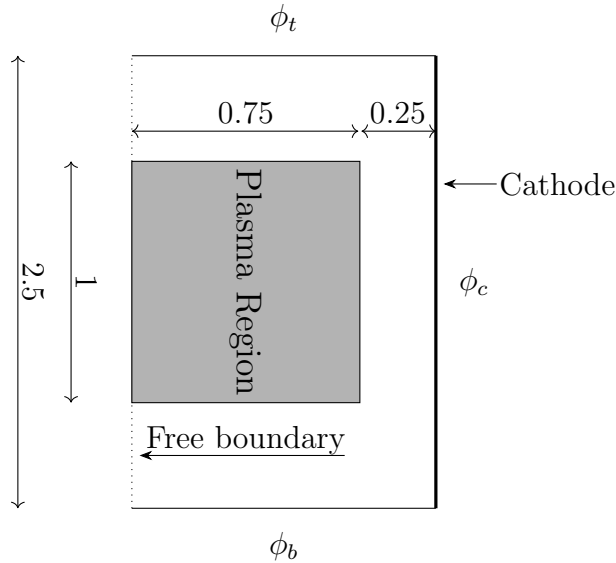
\begin{figure}
\centering
\begin{tikzpicture}
\draw[dotted] (0, 0) -- (0, 2.4);
\draw (0, 2.4) -- (0, 4.6);
\draw[dotted] (0, 4.6) -- (0,6);
\draw[very thick] (4, 0) -- (4,6);
\draw (0, 6) -- (4,6);
\draw (0, 0) -- (4,0);
\fill[black!30!white] (0,1.4) rectangle (3.0, 4.6);
\draw[black] (0, 1.4) rectangle (3,4.6);
\filldraw[black] (1.5, 3.0) node[align=center, rotate=-90]{\scalebox{1.12}{Plasma Region}};

\draw[<->] (-1.5, 0) -- (-1.5,6);
\filldraw[black] (-1.3, 3.0) node[align=center, rotate=-90]{$2.5$};
\draw[<->] (-0.8, 1.4) -- (-0.8,4.6);
\filldraw[black] (-0.6, 3.0) node[align=center, rotate=-90]{$1$};

\draw[<->] (0.0, 5) -- (2.98, 5);
\filldraw[black] (1.5, 5.25) node[align=center]{$0.75$};
\draw[<->] (3.02, 5) -- (4,5);
\filldraw[black] (3.5, 5.25) node[align=center]{$0.25$};

\draw[Stealth-] (0.1, 0.7) -- (2.8, 0.7);
\filldraw[black] (1.5, 0.9) node[align=center]{Free boundary};
\filldraw[black] (2, -0.5) node[align=center]{$\phi_b$};
\filldraw[black] (2, 6.5) node[align=center]{$\phi_t$};
\filldraw[black] (4.5, 3.0) node[align=center]{$\phi_c$};

\draw[Stealth-] (4.1, 4.3) -- (4.8,4.3);
\filldraw[black] (5.6, 4.3) node[align=center]{Cathode};

\end{tikzpicture}
\caption{Schematic geometry for Ion extraction test problem}
\label{fig:schematic_extractor}
\end{figure}

\begin{figure}[htbp]
  \centering
    \includegraphics[height=0.21\textheight]{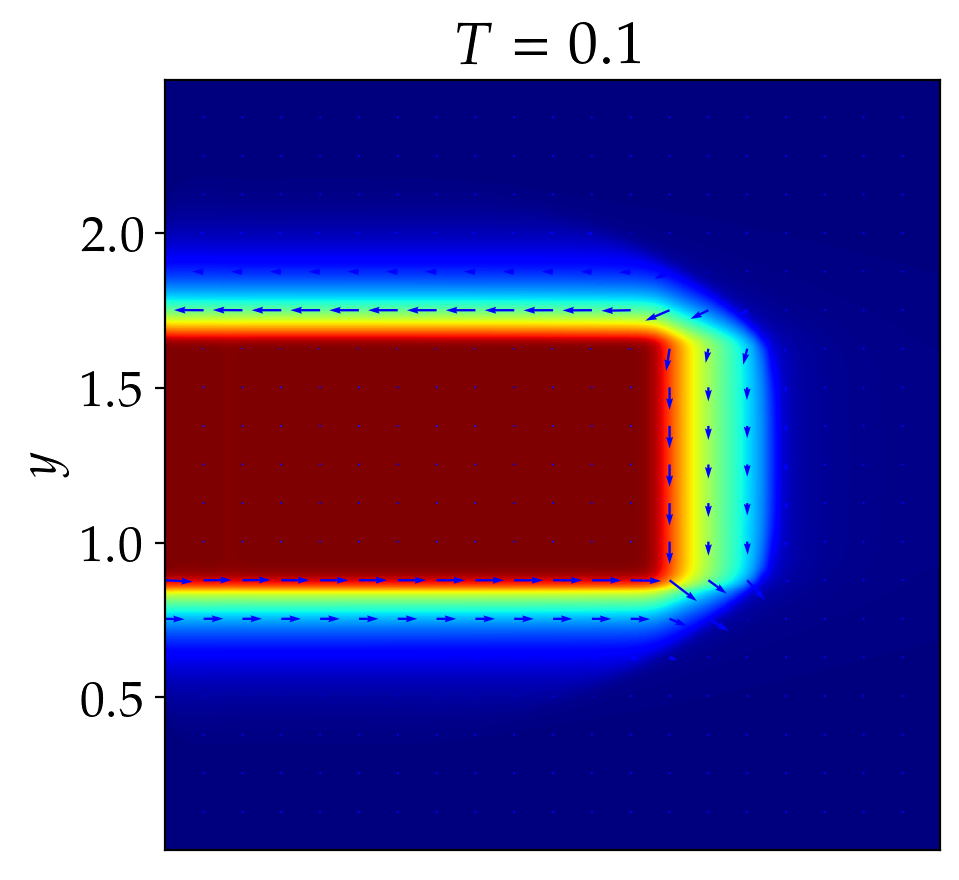}
    \includegraphics[height=0.21\textheight]{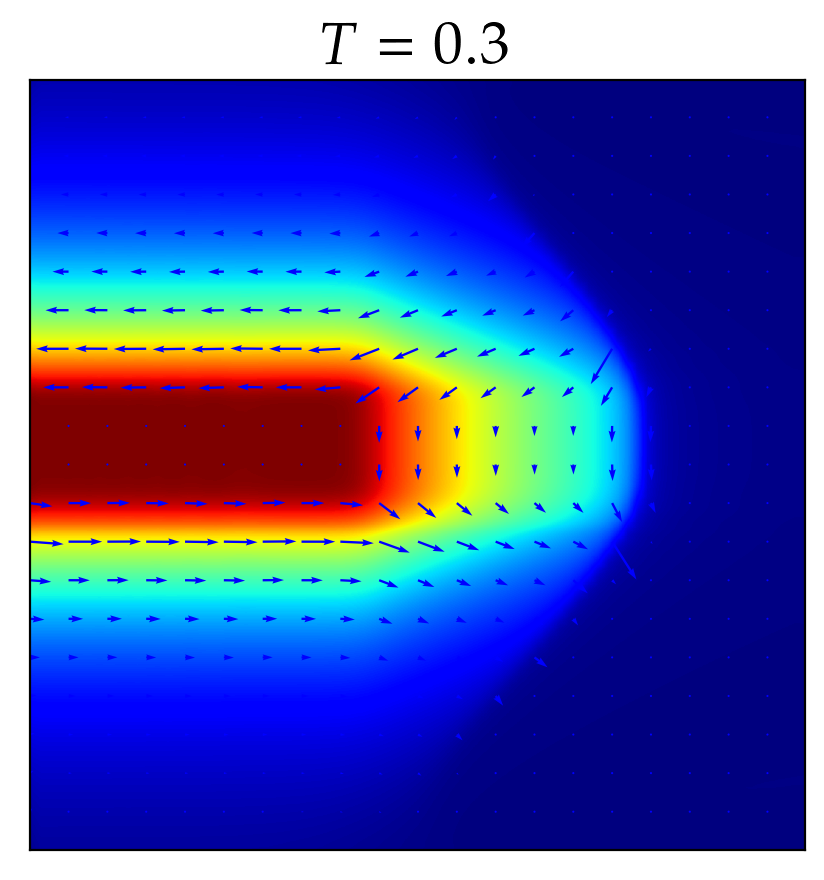}
    \includegraphics[height=0.21\textheight]{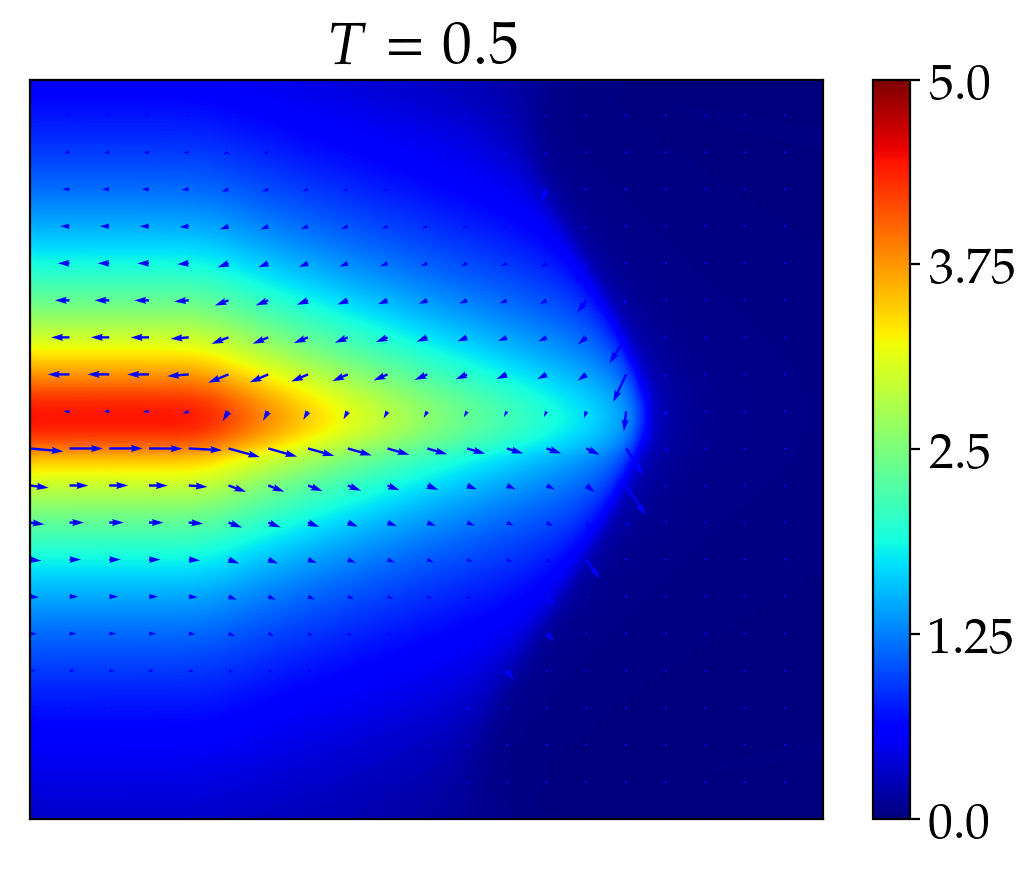}
    \includegraphics[height=0.19\textheight]{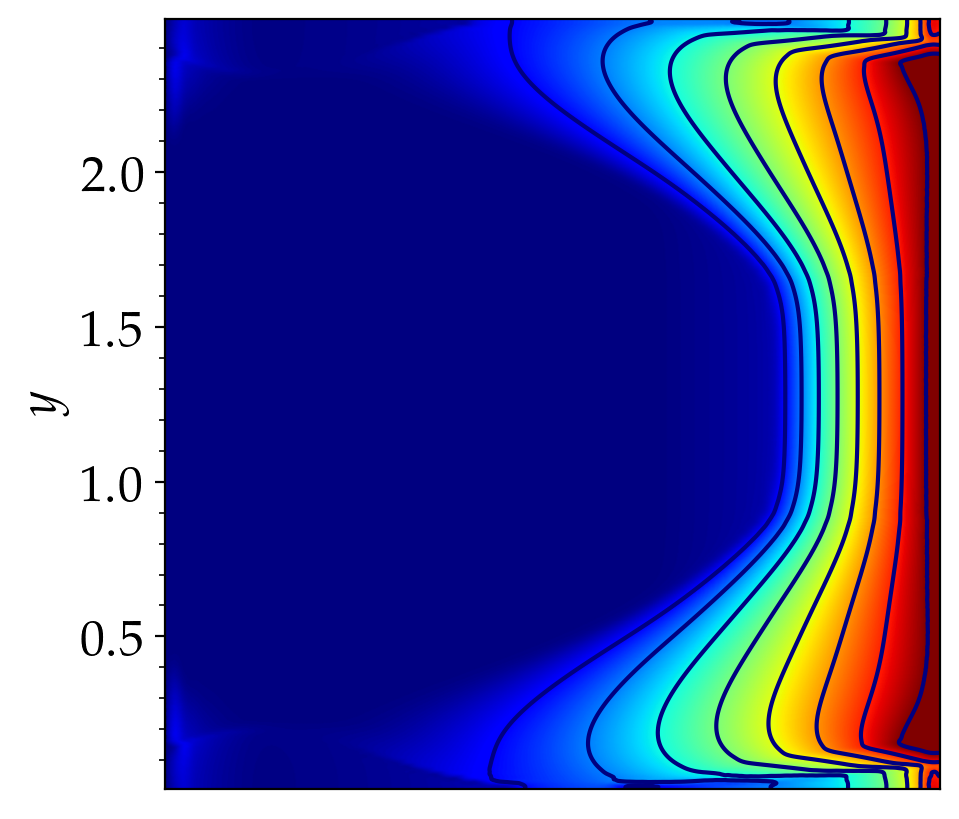}
    \includegraphics[height=0.19\textheight]{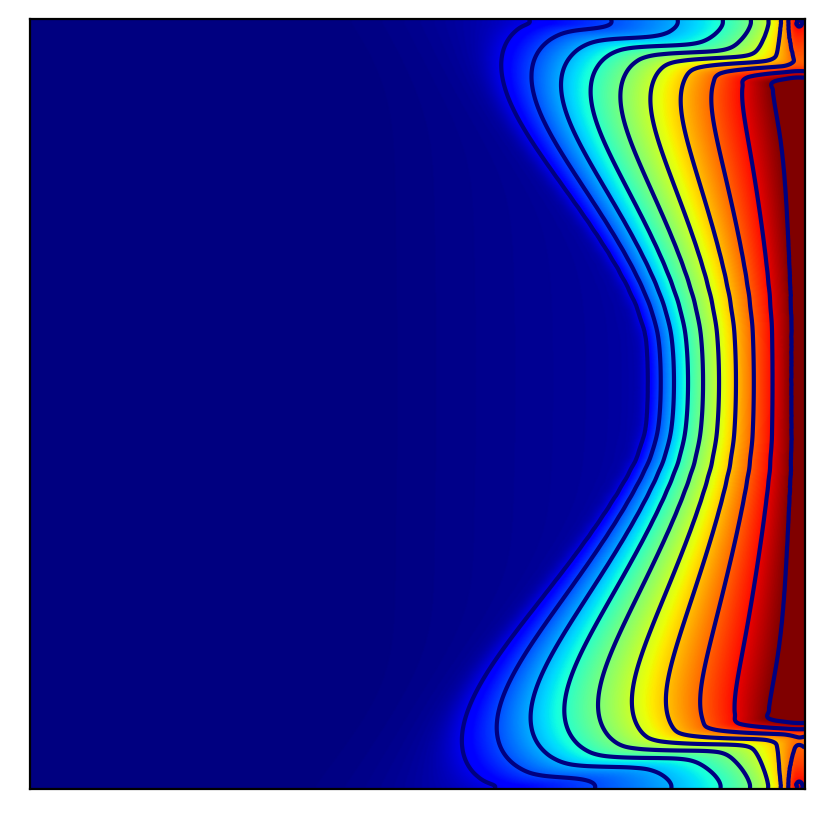}
    \includegraphics[height=0.2\textheight, width=0.335\textwidth]{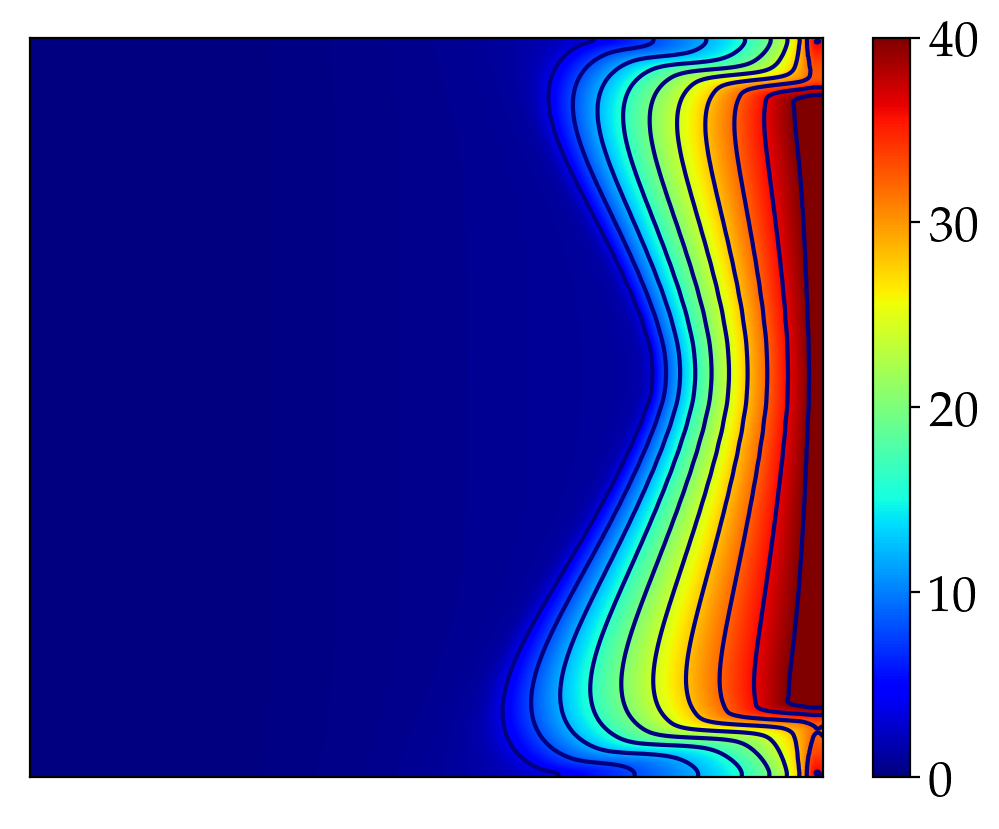}
    \includegraphics[height=0.22\textheight]{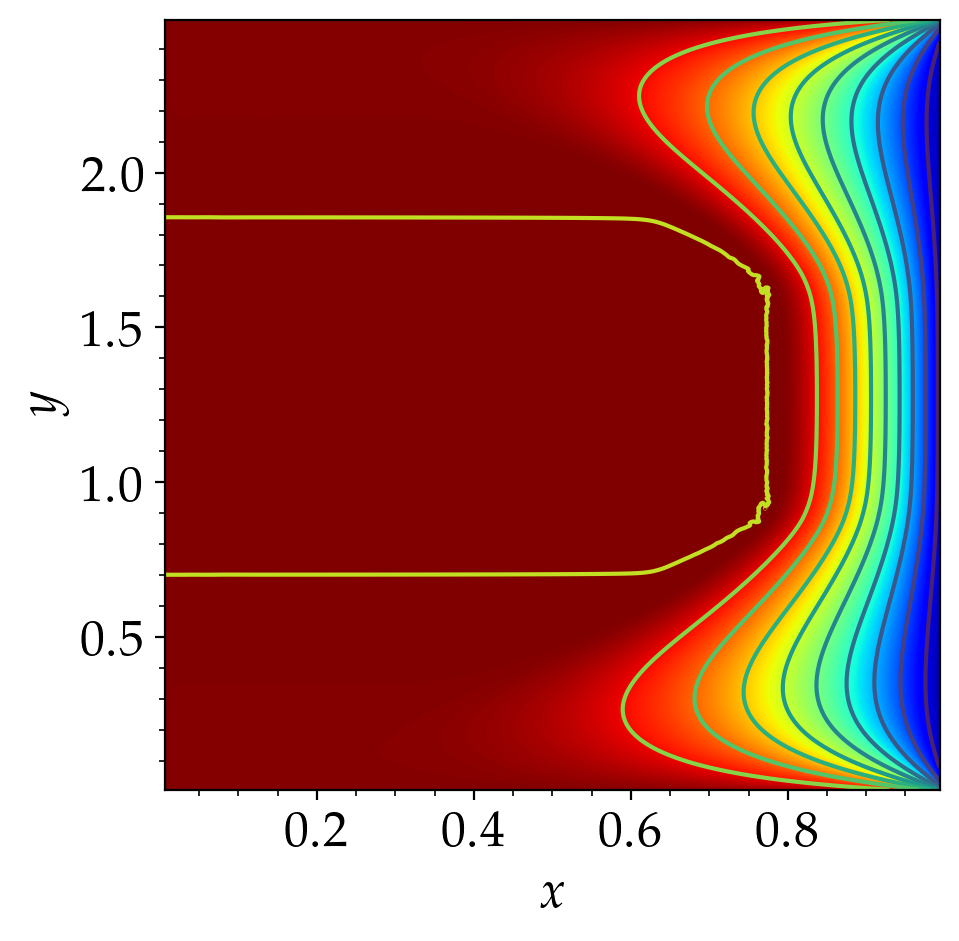}
    \includegraphics[height=0.22\textheight]{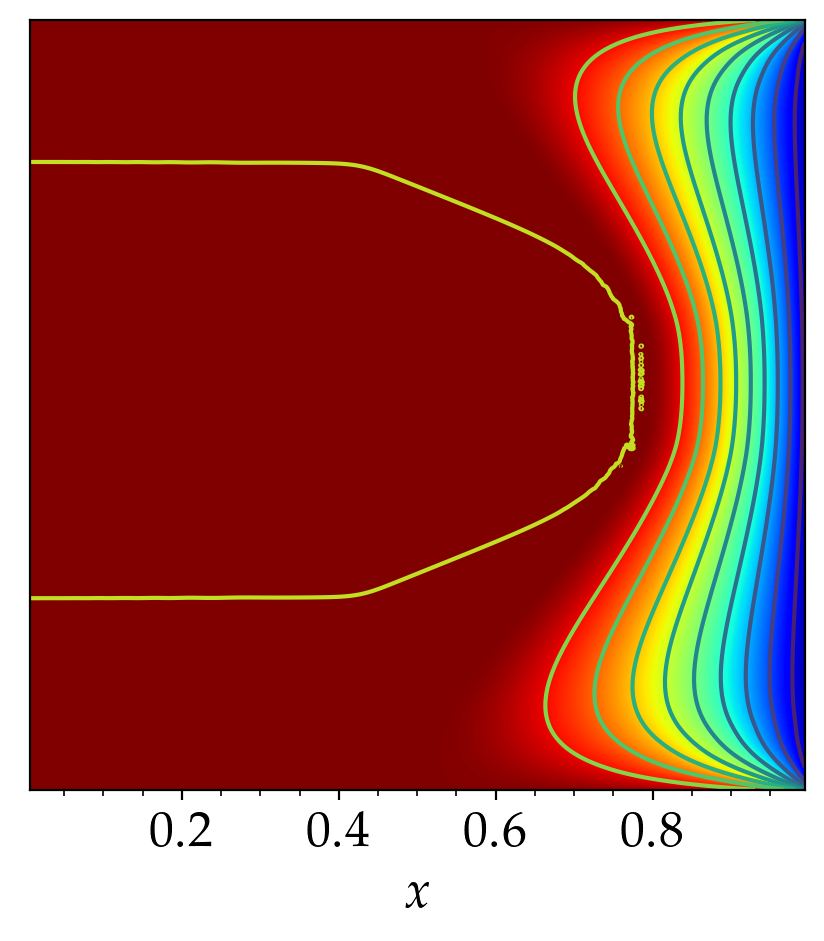}
    \includegraphics[height=0.23\textheight, width=0.345\textwidth]{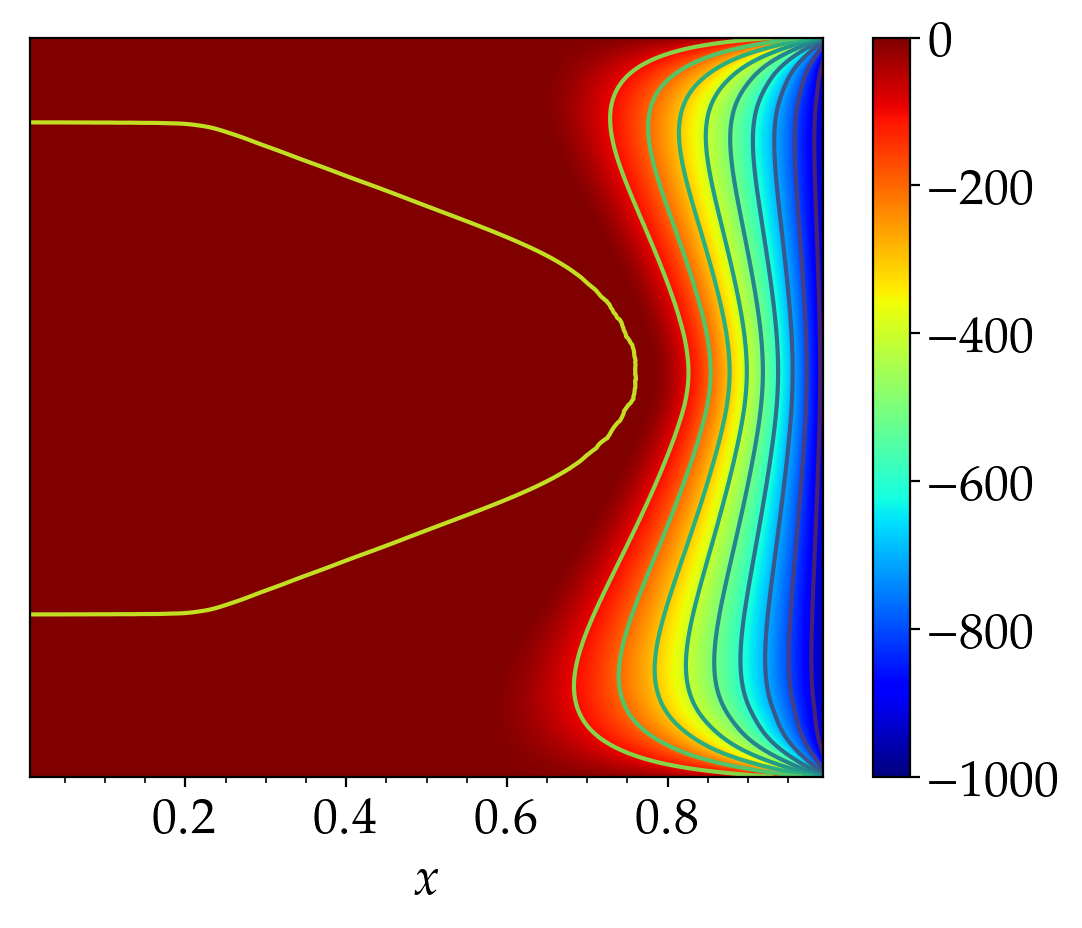}
    \caption{The ion density, velocity(x-component), potential plots at different times.}  
    \label{fig:DN_i-extraction}
  \end{figure}

The extraction process is simulated for three different times $T=0.1, 0.3$ and $0.5$. In Figure~\ref{fig:DN_i-extraction}, we plot the density, the $x$-component of the velocity and the potential profiles at the chosen times. The figures clearly show the immediate formation of an ion sheath. During the initial times, the curvilinear potential contours develop in the peripheral regions. At later times we see the symmetric axis of the plasma shifting towards the positive $y$-axis due to the initial $y$-velocity. This potential accelerates the ions and gives rise to the formation of ion acoustic waves. In Figure~\ref{fig:DN_i-extraction}, a gradual density decline can be clearly seen and the propagating ion acoustic wave is visible from the elliptical shape, particularly noticeable during the early stages around the peak density. As the pre-sheath field pushes the ions to the sheath boundary (near the high voltage cathode area), the peak density is decreased. The ion acoustic wave fundamentally creates an internal electric field that reduces significantly the plasma extraction duration. This electric field accelerates the plasma outward in all directions from the peak density region. We can also see from the colour-bar of the velocity plots that the velocity becomes supersonic after the sheath transition region as the scaled velocity of sound is equal to $11/8$ in a plasma. As a result, a substantial ion current is extracted from the top and bottom of the plasma at a supersonic velocity.      

\section{Concluding Remarks}
\label{sec:conclusion}
We have designed, analysed and implemented an AP and energy stable scheme for the EPB system in the quasineutral limit. The energy stability is achieved by introducing suitable stabilisation terms in the convective fluxes and the source term. The scheme is shown to be energy stable under a CFL-type condition which also yields the positivity of the density. The existence of the numerical solution is established using some topological degree theory results and a comparison principle for the discrete nonlinear Poisson equation. Using apriori energy bounds and some reasonable boundedness assumptions, the Lax-Wendroff-type consistency of the numerical scheme with the weak solutions of the continuous model as well as its consistency with the quasineutral ICE limit system are shown rigorously. The results of several numerical case studies showcase the robustness of the present scheme in both the dispersive and the quasineutral regimes. The numerical results also reveal that the present scheme can very well resolve plasma sheaths and the related dynamics which indicates its potential to applications involving low-temperature plasma problems. 

\appendix

\section{Existence and Uniqueness for the Discrete Poisson-Boltzmann Equation}
\label{sec:Comp_PB}

The goal of this Appendix is to establish the existence and uniqueness of the discrete Neumann problem \eqref{eq:dis_neumann}. We also prove a comparison principle for the solution which is vital in proving Theorem~\ref{thm:dis_existence} on the existence of the numerical solution. 

We start with the discrete problem
\begin{subequations}
\label{eq:dis_poisson_comp}
   \begin{align}
    -\varepsilon^2(\Delta_\M \phi)_{K}+e^{\phi_K}&=\rho_K, \quad \forall K\in\M, \\
    (\partial_{\E}^{(i)}\phi)_\s &= 0, \quad \forall \s\in\E_{\extr}^{(i)}.
\end{align} 
\end{subequations}

\begin{theorem}
\label{thm:dis_existence}
    Given a $\rho\in L_\M(\Omega)$ such that $\rho_K>0$ for all $K\in\M$, there exist a solution $\phi\in L_\M(\Omega)$ to \eqref{eq:dis_poisson_comp}. 
\end{theorem}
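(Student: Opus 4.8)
The plan is to recast the discrete problem \eqref{eq:dis_poisson_comp} variationally, as the Euler--Lagrange equation of a strictly convex, coercive energy functional on the finite-dimensional space $\Lm$, and then invoke the direct method of the calculus of variations. Concretely, I would introduce
\begin{equation*}
J(\phi)=\frac{\veps^2}{2}\int_\Omega\abs{\grd_\E\phi}^2\,\dd\uu{x}+\sum_{K\in\M}\abs{K}\big(e^{\phi_K}-\rho_K\phi_K\big),\qquad \phi\in\Lm,
\end{equation*}
and observe that, for any $\psi\in\Lm$, differentiating $s\mapsto J(\phi+s\psi)$ at $s=0$ and using the gradient--divergence duality \eqref{eq:dis_dual} to rewrite $\veps^2\int_\Omega\grd_\E\phi\cdot\grd_\E\psi\,\dd\uu{x}=-\veps^2\sum_{K\in\M}\abs{K}\psi_K(\Delta_\M\phi)_K$ yields
\begin{equation*}
\frac{\dd}{\dd s}J(\phi+s\psi)\Big|_{s=0}=\sum_{K\in\M}\abs{K}\,\psi_K\big[-\veps^2(\Delta_\M\phi)_K+e^{\phi_K}-\rho_K\big].
\end{equation*}
Since $\psi$ is arbitrary, a critical point of $J$ is exactly a solution of \eqref{eq:dis_poisson_comp}; note that the homogeneous Neumann condition is already encoded in the definition \eqref{eq:dis_grad} of $\grd_\E$, so no boundary contribution appears.

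It then remains to show $J$ attains a minimum. The functional is manifestly smooth. It is strictly convex: the first term is a nonnegative quadratic form in $\phi$ (convex), while $\sum_{K\in\M}\abs{K}e^{\phi_K}$ has diagonal Hessian with strictly positive entries $\abs{K}e^{\phi_K}$, hence is strictly convex in all coordinates, and the linear term is harmless. For coercivity I would argue componentwise: each scalar map $t\mapsto e^t-\rho_K t$ is bounded below and tends to $+\infty$ as $\abs{t}\to\infty$ precisely because $\rho_K>0$, so $\sum_{K\in\M}\abs{K}(e^{\phi_K}-\rho_K\phi_K)\to+\infty$ whenever $\norm{\phi}_\infty\to\infty$; together with the nonnegativity of the gradient term this gives $J(\phi)\to+\infty$ as $\norm{\phi}\to\infty$. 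A continuous, coercive function on the finite-dimensional space $\Lm\cong\mbb{R}^{\#\M}$ admits a global minimiser, which is a critical point and hence a solution; strict convexity moreover forces this minimiser, and thus the solution, to be unique.

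The main obstacle is the coercivity step, and it is exactly here that the hypothesis $\rho_K>0$ is indispensable: the zeroth-order term controls the direction $\phi_K\to+\infty$ through $e^{\phi_K}$ and the direction $\phi_K\to-\infty$ through $-\rho_K\phi_K$, but the latter barrier collapses if some $\rho_K$ vanishes. Equivalently, one may phrase the argument through monotone-operator theory: the map $\phi\mapsto-\veps^2\Delta_\M\phi+e^{\phi}$ is a continuous, strictly monotone and coercive operator on $\mbb{R}^{\#\M}$, whose surjectivity follows from the finite-dimensional Minty--Browder/Brouwer theorem, yielding the same conclusion. The remaining verifications---the Euler--Lagrange computation and the Hessian sign---are routine.
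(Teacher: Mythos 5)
Your main argument is correct, and it takes a genuinely different route from the paper. The paper's proof substitutes $z_K=e^{\phi_K}$, uses Lemma~\ref{lem:rho_sig} to rewrite the discrete Laplacian of $\ln z$ as $\dive_\M\big(\tfrac1z\nabla_\E z\big)$, and then runs a Brouwer fixed-point argument: the frozen-coefficient linear problem is uniquely solvable because its matrix is an M-matrix, and a discrete maximum principle shows the solution map leaves the compact convex set $\{\alpha\leqslant z_K\leqslant\beta\}$ (with $\alpha=\min_K\rho_K$, $\beta=\max_K\rho_K$) invariant. You instead identify \eqref{eq:dis_poisson_comp} as the Euler--Lagrange equation of a strictly convex, coercive functional and minimise. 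Your Euler--Lagrange computation is sound: by the definition \eqref{eq:dis_grad} the discrete gradient vanishes on external dual cells, so $\grd_\E\phi\in\Hez$, the duality \eqref{eq:dis_dual} applies, and the Neumann condition is indeed built in. The coercivity step correctly isolates the role of $\rho_K>0$. Comparing what each approach buys: yours delivers existence \emph{and} uniqueness in one stroke (the paper needs the separate comparison principle, Theorem~\ref{thm:dis_comp}, to obtain uniqueness), and it dispenses with the M-matrix and maximum-principle machinery; the paper's construction, on the other hand, produces the solution already inside $[\ln\alpha,\ln\beta]$, so the $L^\infty$ bound of Theorem~\ref{thm:phi_bdd} --- which is what Theorem~\ref{thm:existence} actually consumes --- comes for free, whereas in your setup those bounds must still be derived separately (e.g.\ from the comparison principle).

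One caveat: your closing ``equivalent'' phrasing via monotone-operator theory is incorrect as stated. The map $F(\phi)=-\veps^2\Delta_\M\phi+e^{\phi}$ is strictly monotone, but it is neither coercive nor surjective. Along constants $\phi\equiv-c$ one has $F(\phi)_K=e^{-c}$, so $\sum_{K\in\M}\abs{K}F(\phi)_K\phi_K=-c\,e^{-c}\abs{\Omega}\to 0$ while $\norm{\phi}\to\infty$, killing coercivity; moreover, summing the equation over cells (the discrete Laplacian is conservative, so its contribution vanishes) shows every $y$ in the range of $F$ satisfies $\sum_{K\in\M}\abs{K}y_K>0$, so $F$ misses, e.g., all nonpositive right-hand sides. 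The monotone route does work, but only when applied to the shifted map $\phi\mapsto F(\phi)-\rho$, whose coercivity again hinges on $\rho_K>0$ --- exactly as in your variational coercivity argument. Since this remark is an aside and your variational proof is self-contained, it does not affect the validity of your solution, but the aside should either be repaired in this way or dropped.
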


\begin{proof}
Consider
\begin{equation*}
-\veps^2\left(\Delta_\M \phi\right)_K+e^{\phi_K}=\rho_K, \quad \forall K \in \M.    
\end{equation*}
Making the change of dependent variable $e^{\phi_K}=z_K \quad$ we obtain
\begin{equation*}
-\veps^2\left(\Delta_\M(\ln z)\right)_K+z_K=\rho_K.    
\end{equation*}
Using the definition of the discrete Laplacian and Lemma~\ref{lem:rho_sig} we write the above equation as
\begin{equation}
\label{eq:dis_poisson_z}
    -\veps^2\Big(\dive_\M\Big(\frac{1}{z}\nabla_\E z\Big)\Big)_K+z_K=\rho_K.
\end{equation}
Since $\rho_K>0$ for all $K$ on a fixed grid, we can assume that $0 < \alpha \leqslant \rho_K \leqslant \beta, \; \forall K$.
We will now prove existence of a solution of \eqref{eq:dis_poisson_z} in the compact subset $\mcal{K}=\left\{\left(v_K\right)_{K \in \M}\colon \alpha \leqslant v_K \leqslant \beta, \; \forall K\right\}$. To this end, we define the map $S\colon \mcal{K}\rightarrow L_\M(\Omega)$ as $S(v)=u$, where $u\in L_\M(\Omega)$ satisfies the linear system:
\begin{equation} 
\label{eq:cov_poisson}
-\varepsilon^2\left(\operatorname{div}_\M\left(\frac{1}{v} \nabla_\E u\right)\right)_K+u_K=\rho_K.
\end{equation}
Clearly, the linear system \eqref{eq:cov_poisson} admits a unique solution as its matrix is an M-matrix \cite{Hac92}. Hence, $S$ is well-defined and is obviously continuous. Using the maximum principle, cf.\ \cite[Theorem 3.24]{Win89}, we get the bound $\alpha\leqslant u_K\leqslant \beta$ for all $K\in\M$. Hence, $S$ becomes a continuous self-map on a compact set of $\mathbb{R}^{\#\M}$ and thus it has a fixed point. In other words, there exist $u\in L_\M(\Omega)$ which satisfy \eqref{eq:dis_poisson_z} and we are done with the proof.
\end{proof}

Having proved the existence of solutions to \eqref{eq:dis_poisson_comp}, we now establish a comparison principle for the solutions. The comparison principle is key to get bounds on the potential $\phi$. 

\begin{theorem}[Comparison principle]
\label{thm:dis_comp}
    Suppose $\phi$ and $\phi^\prime\in L_\M(\Omega)$ solve \eqref{eq:dis_poisson_comp} with the right hand sides $\rho$ and $\rho^\prime\in L_\M(\Omega)$ respectively. Then $\phi_K\leqslant \phi^\prime_K, \; \forall K\in\M$ if $\rho_K\leqslant \rho^\prime_K, \; \forall K\in\M$.
\end{theorem}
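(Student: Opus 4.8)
The plan is to prove the statement by a discrete maximum principle applied to the difference $w := \phi - \phi'$. First I would make the structure of the discrete Laplacian explicit. Unfolding the definitions \eqref{eq:dis_grad}--\eqref{eq:disc_lap} and using that each edge $\sigma\in\E^{(i)}$ is orthogonal to $\uu{e}^{(i)}$ (so that $(\uu{e}^{(i)}\cdot\uu{\nu}_{\sigma,K})^2=1$, and the gradient orientation factor appears squared in the divergence), one finds for each $K\in\M$ that
\begin{equation*}
(\Delta_\M q)_K=\frac{1}{\abs{K}}\sum_{\sigma=K|L\in\E_\intr}\tau_\sigma(q_L-q_K),\qquad \tau_\sigma:=\frac{\abs{\sigma}^2}{\abs{\Ds}}>0,
\end{equation*}
where the boundary edges $\sigma\in\E_\extr$ drop out owing to the homogeneous Neumann condition $(\partial^{(i)}_\E q)_\sigma=0$. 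The two features to record are that every transmissibility $\tau_\sigma$ is strictly positive and that the zeroth-order term $z\mapsto e^{z}$ is strictly increasing.

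Next I would subtract the two instances of \eqref{eq:dis_poisson_comp} cell by cell. Writing $w_K=\phi_K-\phi'_K$ gives
\begin{equation*}
-\frac{\veps^2}{\abs{K}}\sum_{\sigma=K|L\in\E_\intr}\tau_\sigma(w_L-w_K)+\big(e^{\phi_K}-e^{\phi'_K}\big)=\rho_K-\rho'_K\leqslant 0,\qquad\forall K\in\M.
\end{equation*}
The core of the argument is a maximum-cell selection: let $K_0\in\M$ be a cell at which $w$ attains its maximum $M:=\max_{K\in\M}w_K$, and suppose for contradiction that $M>0$. Since $w_L\leqslant w_{K_0}$ for every neighbour $L$ and each $\tau_\sigma>0$, the diffusive term at $K_0$ satisfies $-\frac{\veps^2}{\abs{K_0}}\sum_{\sigma=K_0|L}\tau_\sigma(w_L-w_{K_0})\geqslant 0$. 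At the same time $w_{K_0}=M>0$ forces $\phi_{K_0}>\phi'_{K_0}$, whence $e^{\phi_{K_0}}-e^{\phi'_{K_0}}>0$ by strict monotonicity of the exponential. Adding these, the left-hand side of the difference equation at $K_0$ is strictly positive, contradicting $\rho_{K_0}-\rho'_{K_0}\leqslant 0$. Hence $M\leqslant 0$, i.e.\ $\phi_K\leqslant\phi'_K$ for all $K\in\M$, which is the assertion.

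The main obstacle lies less in the maximum-principle step itself---which is routine once the sign structure is in place---than in verifying that the discrete operator genuinely has the monotone (M-matrix) structure, namely that unfolding \eqref{eq:disc_lap} produces only strictly positive off-diagonal transmissibilities while the Neumann boundary contributions vanish. It is worth emphasising that the reaction term $e^{\phi_K}$ actually aids the proof: its strict monotonicity makes the contradiction strict, so the argument needs neither connectedness of the mesh nor a separate propagation-of-the-maximum step, and it simultaneously delivers uniqueness for \eqref{eq:dis_poisson_comp} by taking $\rho=\rho'$ to obtain both $\phi\leqslant\phi'$ and $\phi'\leqslant\phi$.
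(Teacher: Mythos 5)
Your proof is correct, but it takes a genuinely different route from the paper's. You argue pointwise: pick a cell $K_0$ where $w=\phi-\phi'$ attains its maximum, observe that the diffusive part is nonnegative there (all transmissibilities $\tau_\sigma=\abs{\sigma}^2/\abs{\Ds}>0$, neighbours no larger), and let the strict monotonicity of $z\mapsto e^z$ produce a strictly positive left-hand side, contradicting $\rho_{K_0}-\rho'_{K_0}\leqslant 0$. The paper instead runs a Stampacchia-type energy argument: it sets $\psi=\phi-\phi'$, linearises the exponential difference via the mean value theorem as $e^{\phi_K^\star}\psi_K$, multiplies the difference equation by the positive part $\psi_K^+$, sums over all cells, and checks that the resulting quadratic terms (an interior ``Dirichlet energy'' over the set $\Gamma=\{K:\psi_K>0\}$, a cross term at the interface of $\Gamma$, and the reaction term $\sum_{K\in\Gamma}\abs{K}e^{\phi_K^\star}\psi_K^2$) are all nonnegative with the last strictly positive, against a nonpositive right-hand side. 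Both proofs rest on exactly the same structural facts you isolate---positive off-diagonal transmissibilities, vanishing Neumann boundary contributions, monotone reaction term---so your verification of the M-matrix structure of $\Delta_\M$ is the shared core. What your version buys is economy: in finite dimensions the maximum cell always exists, no truncation or mean value theorem is needed, and the strict monotonicity of the exponential removes any need for connectedness or propagation-of-maximum arguments. What the paper's version buys is robustness: the positive-part/energy technique is the one that survives in infinite-dimensional or variational settings (and mirrors the continuous theory), and the mean value theorem step packages the nonlinearity into a nonnegative coefficient, which is convenient if one later wants quantitative bounds. Your closing remark that taking $\rho=\rho'$ yields uniqueness is also correct and is exactly how the paper's final corollary combines its Theorems A.1 and A.2.
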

\begin{proof}
   We proceed by contradiction. Suppose there exists a $K\in\M$ such that $\phi_K>\phi_K^{\prime}$. Consider the following subset of $\M$:
   \begin{equation*}
       \Gamma:=\left\{K \in \M: \phi_K>\phi_K^{\prime}\right\}.
   \end{equation*}
Since $\phi$ and $\phi^\prime$ are solutions, we have
\begin{align*}
    -\veps^2(\Delta_\M \phi)_{K}+e^{\phi_K}&=\rho_K,\\
    -\veps^2(\Delta_\M \phi^\prime)_{K}+e^{\phi^\prime_K}&=\rho^\prime_K.
\end{align*}
Subtracting one from the other, we get 
\begin{equation}
    \label{eq:subs_poisson}
    -\varepsilon^2(\Delta_\M (\phi-\phi^\prime))_{K}+e^{\phi_K}-e^{\phi^\prime_K}=\rho_K-\rho^\prime_K
\end{equation}
Letting $\psi_K=\phi_K-\phi_K^{\prime}$ and using mean value theorem on the second term, we simplify \eqref{eq:subs_poisson} to
\begin{equation}
    \label{eq:subs_poisson_psi}
    -\veps^2\sum_{\substack{\s\in\Ek \\ \s=K|L}}\frac{\abs{\s}^2}{\abs{\Ds}}(\psi_L-\psi_K)+\abs{K}e^{\phi_K^{\star}}\psi_K=\abs{K}(\rho_K-\rho^\prime_K),
\end{equation}
where $\phi_K^*\in\llbracket \phi_K,\phi^\prime_K\rrbracket$. We multiply \eqref{eq:subs_poisson_psi} by $\psi_K^{+}$ and then take sum over $K\in\M$ to get
\begin{equation*}
    -\veps^2\sum_{K\in\M}\sum_{\substack{\s\in\Ek \\ \s=K|L}}\frac{\abs{\s}^2}{\abs{\Ds}}(\psi_L-\psi_K)\psi_K^{+}+\sum_{K\in\M}\abs{K}e^{\phi_K^{\star}}\psi_K\psi_K^{+}=\sum_{K\in\M}\abs{K}(\rho_K-\rho^\prime_K)\psi_K^{+}.
\end{equation*}
As $\psi_K^{+}$ vanishes on $\M \smallsetminus \Gamma$, the above equation yields
\begin{equation*}
    \veps^2\sum_{\substack{\s(=K|L)\in\E \\ K,L\in\Gamma}}\frac{\abs{\s}^2}{\abs{\Ds}}(\psi_L-\psi_K)^2-\veps^2\sum_{\substack{\s(=K|L)\in\E \\ K\in\Gamma,L\notin\Gamma}}\frac{\abs{\s}^2}{\abs{\Ds}}\psi_L\psi_K+\sum_{K\in\Gamma}\abs{K}e^{\phi_K^{\star}}\psi_K^2=\sum_{K\in\Gamma}\abs{K}(\rho_K-\rho^\prime_K)\psi_K
\end{equation*}
It is easy to see that the first and third terms on the left hand side are positive and as $\psi_L\leqslant 0$ when $L\in\Gamma$, the second term is non-negative. But on the other hand, right hand side is negative since $\rho_K\leqslant \rho_K^{\prime}$. Hence, we arrive at a contradiction to the fact that there exist a $K\in\M$ such that $\phi_K>\phi_K^{\prime}$. Therefore, we conclude that $\phi_K\leqslant \phi^\prime_K, \; \forall K\in\M$.
\end{proof}

\begin{theorem}
\label{thm:phi_bdd}
  Let $\phi\in L_\M(\Omega)$ solve \eqref{eq:dis_poisson_comp} for $\rho\in L_\M(\Omega)$ satisfying $0 < \underline{c} \leqslant \rho_K \leqslant \bar{C}$ for all $K\in\M$. Then, $\phi$ is also bounded and satisfies $\ln(\underline{c}) \leqslant \phi_K \leqslant \ln(\bar{C}), \; \forall K\in\M$.
\end{theorem}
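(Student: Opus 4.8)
The plan is to derive both bounds as a direct consequence of the discrete comparison principle established in Theorem~\ref{thm:dis_comp}, using constant functions as sub- and supersolutions. The key observation is that a spatially constant primal grid function has a vanishing discrete Laplacian: if $c_K = c$ for all $K\in\M$, then by the definition of the discrete gradient \eqref{eq:dis_grad} every difference $c_L - c_K$ is zero, so $\grd_\E c = 0$ and hence $(\Delta_\M c)_K = (\dive_\M(\grd_\E c))_K = 0$ for each $K$. Moreover such a constant trivially satisfies the homogeneous Neumann condition $(\partial_\E^{(i)} c)_\s = 0$ on $\E_\extr^{(i)}$.

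First I would produce the supersolution. Take the constant function $\phi^\prime_K = \ln(\bar C)$ for all $K\in\M$. Substituting into \eqref{eq:dis_poisson_comp} and using the vanishing of the discrete Laplacian gives $-\veps^2(\Delta_\M \phi^\prime)_K + e^{\phi^\prime_K} = \bar C$, so $\phi^\prime$ solves the discrete Poisson--Boltzmann problem with constant right-hand side $\rho^\prime_K \equiv \bar C$. Since the hypothesis gives $\rho_K \leqslant \bar C = \rho^\prime_K$ for all $K$, Theorem~\ref{thm:dis_comp} immediately yields $\phi_K \leqslant \phi^\prime_K = \ln(\bar C)$ for all $K\in\M$.

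The lower bound is entirely symmetric. Taking the constant $\phi^{\prime\prime}_K = \ln(\underline c)$ for all $K$, the same computation shows that $\phi^{\prime\prime}$ solves \eqref{eq:dis_poisson_comp} with right-hand side $\rho^{\prime\prime}_K \equiv \underline c$. From $\underline c \leqslant \rho_K$ for all $K$ and another application of the comparison principle, now with $\phi^{\prime\prime}$ playing the role of the smaller solution, we obtain $\ln(\underline c) = \phi^{\prime\prime}_K \leqslant \phi_K$ for all $K\in\M$. Combining the two inequalities gives the claimed bound $\ln(\underline c) \leqslant \phi_K \leqslant \ln(\bar C)$.

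There is essentially no analytical obstacle here; the entire content of the argument is the recognition that constants solve the nonlinear equation with themselves exponentiated on the right-hand side, after which the comparison principle does all the work. The only point requiring a word of care is that the existence of the solution $\phi$ being compared against is guaranteed by Theorem~\ref{thm:dis_existence}, and that the constant barriers are genuine solutions (not merely sub-/supersolutions in an inequality sense), so that Theorem~\ref{thm:dis_comp} applies verbatim without needing a one-sided variant.
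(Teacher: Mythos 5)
Your proposal is correct and is exactly the argument the paper intends: the paper's proof is a one-line appeal to Theorems~\ref{thm:dis_existence} and~\ref{thm:dis_comp}, and your fleshing-out via the constant barriers $\ln(\underline{c})$ and $\ln(\bar{C})$ (which solve \eqref{eq:dis_poisson_comp} exactly, since the discrete Laplacian of a constant vanishes) is the natural realisation of that appeal. Your closing remark is also apt: since the constants are genuine solutions rather than mere sub-/supersolutions, Theorem~\ref{thm:dis_comp} applies verbatim and no one-sided variant is needed.
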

\begin{proof}
    The proof is straightforward and is easily obtained from the Theorem~\ref{thm:dis_existence} and \ref{thm:dis_comp}.
\end{proof}
\begin{corollary}
    Combining Theorems \ref{thm:dis_existence} and \ref{thm:dis_comp}, we deduce that for a given $\rho\in L_{\M}(\Omega)$ there exist unique $\phi\in L_\M(\Omega)$ which solves \eqref{eq:dis_poisson_comp}.
\end{corollary}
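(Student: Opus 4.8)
The plan is to split the corollary into its two assertions and dispatch each by a cited result. Existence of a solution $\phi\in\Lm$ for any positive datum $\rho$ is precisely the content of Theorem~\ref{thm:dis_existence}, so nothing further is needed there; the only work lies in uniqueness, for which the symmetric use of the comparison principle (Theorem~\ref{thm:dis_comp}) is the natural device.

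First I would suppose that $\phi,\phi'\in\Lm$ are two solutions of the discrete Poisson--Boltzmann problem \eqref{eq:dis_poisson_comp} associated with one and the same right-hand side $\rho\in\Lm$ satisfying $\rho_K>0$ for all $K\in\M$. Since the two solutions share the identical datum, the ordering hypothesis of Theorem~\ref{thm:dis_comp}, namely $\rho_K\leqslant\rho'_K$ for all $K$, is satisfied trivially in \emph{both} directions with $\rho'=\rho$.

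Next I would apply Theorem~\ref{thm:dis_comp} twice. Regarding $\phi$ as the solution for datum $\rho$ and $\phi'$ as the solution for datum $\rho'=\rho$, the inequality $\rho_K\leqslant\rho'_K$ gives $\phi_K\leqslant\phi'_K$ for every $K\in\M$. Interchanging the roles of the two solutions and using $\rho'_K\leqslant\rho_K$ yields the reverse inequality $\phi'_K\leqslant\phi_K$. Together these force $\phi_K=\phi'_K$ for all $K\in\M$, i.e.\ $\phi=\phi'$, establishing uniqueness and hence the corollary.

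There is no genuine obstacle: the entire substance is carried by Theorems~\ref{thm:dis_existence} and~\ref{thm:dis_comp}, and the corollary is a purely formal consequence obtained by invoking the comparison principle in both orderings. The single point deserving a moment's attention is that Theorem~\ref{thm:dis_comp} is formulated for \emph{arbitrary} pairs of data related by $\rho_K\leqslant\rho'_K$, which is exactly what legitimises its symmetric application to the degenerate case $\rho'=\rho$.
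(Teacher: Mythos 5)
Your proposal is correct and coincides with the paper's intended argument: existence is taken directly from Theorem~\ref{thm:dis_existence}, and uniqueness follows by applying the comparison principle of Theorem~\ref{thm:dis_comp} twice with $\rho'=\rho$ to obtain both inequalities $\phi_K\leqslant\phi'_K$ and $\phi'_K\leqslant\phi_K$ (the paper leaves this routine deduction implicit). The only caveat, which is a slight imprecision in the paper's statement rather than in your argument, is that the datum $\rho$ must be positive (as in Theorem~\ref{thm:dis_existence}), a hypothesis you correctly carry along.
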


\bibliographystyle{abbrv}
\bibliography{references}

\begin{thebibliography}{10}

\bibitem{AP14}
J.~E. Allen and M.~Perego.
\newblock {On the ion front of a plasma expanding into a vacuum}.
\newblock {\em Physics of Plasmas}, 21(3):034504, 03 2014.

\bibitem{AGK23}
K.~R. Arun, R.~Ghorai, and M.~Kar.
\newblock An {A}symptotic {P}reserving and {E}nergy {S}table {S}cheme for the {B}arotropic {E}uler {S}ystem in the {I}ncompressible {L}imit.
\newblock {\em J. Sci. Comput.}, 97(3):Paper No. 73, 2023.

\bibitem{AGK24}
K.~R. Arun, R.~Ghorai, and M.~Kar.
\newblock An asymptotic preserving and energy stable scheme for the {E}uler-{P}oisson system in the quasineutral limit.
\newblock {\em Appl. Numer. Math.}, 198:375--400, 2024.

\bibitem{AK24b}
K.~R. Arun and M.~Kar.
\newblock An energy stable well-balanced scheme for the barotropic {Euler} system with gravity under the anelastic scaling.
\newblock {\em arXiv e-prints}, May 2024.

\bibitem{AK24a}
K.~R. Arun and A.~Krishnamurthy.
\newblock A semi-implicit finite volume scheme for dissipative measure-valued solutions to the barotropic {E}uler system.
\newblock {\em ESAIM Math. Model. Numer. Anal.}, 58(1):47--77, 2024.

\bibitem{ADD+21}
E.~Audusse, V.~Dubos, A.~Duran, N.~Gaveau, Y.~Nasseri, and Y.~Penel.
\newblock Numerical approximation of the shallow water equations with {C}oriolis source term.
\newblock In {\em C{EMRACS} 2019---geophysical fluids, gravity flows}, volume~70 of {\em ESAIM Proc. Surveys}, pages 31--44. EDP Sci., Les Ulis, 2021.

\bibitem{BCK24}
J.~Bae, J.~Choi, and B.~Kwon.
\newblock Formation of singularities in plasma ion dynamics.
\newblock {\em Nonlinearity}, 37(4):045011, mar 2024.

\bibitem{BAL+14}
G.~Bispen, K.~R. Arun, M.~Luk\'{a}\v{c}ov\'{a}-Medvid'ov\'{a}, and S.~Noelle.
\newblock I{MEX} large time step finite volume methods for low {F}roude number shallow water flows.
\newblock {\em Commun. Comput. Phys.}, 16(2):307--347, 2014.

\bibitem{BJR+19}
S.~Boscarino, J.-M. Qiu, G.~Russo, and T.~Xiong.
\newblock A high order semi-implicit {IMEX} {WENO} scheme for the all-{M}ach isentropic {E}uler system.
\newblock {\em J. Comput. Phys.}, 392:594--618, 2019.

\bibitem{Che13}
F.~Chen.
\newblock {\em Introduction to Plasma Physics and Controlled Fusion: Volume 1: Plasma Physics}.
\newblock Springer US, 2013.

\bibitem{CDM+95}
S.~Cordier, P.~Degond, P.~Markowich, and C.~Schmeiser.
\newblock Quasineutral limit of travelling waves for the {E}uler-{P}oisson model.
\newblock In {\em Mathematical and numerical aspects of wave propagation ({M}andelieu-{L}a {N}apoule, 1995)}, pages 724--733. SIAM, Philadelphia, PA, 1995.

\bibitem{CG00}
S.~Cordier and E.~Grenier.
\newblock Quasineutral limit of an {E}uler-{P}oisson system arising from plasma physics.
\newblock {\em Comm. Partial Differential Equations}, 25(5-6):1099--1113, 2000.

\bibitem{CDV17}
F.~Couderc, A.~Duran, and J.-P. Vila.
\newblock An explicit asymptotic preserving low {F}roude scheme for the multilayer shallow water model with density stratification.
\newblock {\em J. Comput. Phys.}, 343:235--270, 2017.

\bibitem{CDV05}
P.~Crispel, P.~Degond, and M.-H. Vignal.
\newblock An asymptotically stable discretization for the {E}uler-{P}oisson system in the quasi-neutral limit.
\newblock {\em C. R. Math. Acad. Sci. Paris}, 341(5):323--328, 2005.

\bibitem{CDV07}
P.~Crispel, P.~Degond, and M.-H. Vignal.
\newblock An asymptotic preserving scheme for the two-fluid {E}uler-{P}oisson model in the quasineutral limit.
\newblock {\em J. Comput. Phys.}, 223(1):208--234, 2007.

\bibitem{Deg13}
P.~Degond.
\newblock Asymptotic-preserving schemes for fluid models of plasmas.
\newblock In {\em Numerical models for fusion}, volume 39/40 of {\em Panor. Synth\`{e}ses}, pages 1--90. Soc. Math. France, Paris, 2013.

\bibitem{DLS+12}
P.~Degond, H.~Liu, D.~Savelief, and M.-H. Vignal.
\newblock Numerical approximation of the {E}uler-{P}oisson-{B}oltzmann model in the quasineutral limit.
\newblock {\em J. Sci. Comput.}, 51(1):59--86, 2012.

\bibitem{DLV08}
P.~Degond, J.-G. Liu, and M.-H. Vignal.
\newblock Analysis of an asymptotic preserving scheme for the {E}uler-{P}oisson system in the quasineutral limit.
\newblock {\em SIAM J. Numer. Anal.}, 46(3):1298--1322, 2008.

\bibitem{DT11}
P.~Degond and M.~Tang.
\newblock All speed scheme for the low {M}ach number limit of the isentropic {E}uler equations.
\newblock {\em Commun. Comput. Phys.}, 10(1):1--31, 2011.

\bibitem{Dei85}
K.~Deimling.
\newblock {\em Nonlinear functional analysis}.
\newblock Springer-Verlag, Berlin, 1985.

\bibitem{DM08}
D.~Donatelli and P.~Marcati.
\newblock A quasineutral type limit for the {N}avier-{S}tokes-{P}oisson system with large data.
\newblock {\em Nonlinearity}, 21(1):135--148, 2008.

\bibitem{DVB17}
A.~Duran, J.-P. Vila, and R.~Baraille.
\newblock Semi-implicit staggered mesh scheme for the multi-layer shallow water system.
\newblock {\em C. R. Math. Acad. Sci. Paris}, 355(12):1298--1306, 2017.

\bibitem{DVB20}
A.~Duran, J.-P. Vila, and R.~Baraille.
\newblock Energy-stable staggered schemes for the {S}hallow {W}ater equations.
\newblock {\em J. Comput. Phys.}, 401:109051, 24, 2020.

\bibitem{EG+10}
R.~Eymard, T.~Gallou\"{e}t, R.~Herbin, and J.-C. Latch\'{e}.
\newblock Convergence of the {MAC} scheme for the compressible {S}tokes equations.
\newblock {\em SIAM J. Numer. Anal.}, 48(6):2218--2246, 2010.

\bibitem{FZ10}
E.~Feireisl and P.~Zhang.
\newblock Quasi-neutral limit for a model of viscous plasma.
\newblock {\em Arch. Ration. Mech. Anal.}, 197(1):271--295, 2010.

\bibitem{GHL22}
T.~Gallou\"{e}t, R.~Herbin, and J.-C. Latch\'{e}.
\newblock Lax-{W}endroff consistency of finite volume schemes for systems of non linear conservation laws: extension to staggered schemes.
\newblock {\em SeMA J.}, 79(2):333--354, 2022.

\bibitem{GHL+18}
T.~Gallou\"{e}t, R.~Herbin, J.-C. Latch\'{e}, and K.~Mallem.
\newblock Convergence of the marker-and-cell scheme for the incompressible {N}avier-{S}tokes equations on non-uniform grids.
\newblock {\em Found. Comput. Math.}, 18(1):249--289, 2018.

\bibitem{GH+21}
T.~Gallou\"{e}t, R.~Herbin, J.-C. Latch\'{e}, and N.~Therme.
\newblock Consistent internal energy based schemes for the compressible {E}uler equations.
\newblock In {\em Numerical simulation in physics and engineering: trends and applications}, volume~24 of {\em SEMA SIMAI Springer Ser.}, pages 119--154. Springer, Cham, [2021] \copyright 2021.

\bibitem{GHM+16}
T.~Gallou\"{e}t, R.~Herbin, D.~Maltese, and A.~Novotny.
\newblock Error estimates for a numerical approximation to the compressible barotropic {N}avier-{S}tokes equations.
\newblock {\em IMA J. Numer. Anal.}, 36(2):543--592, 2016.

\bibitem{GKP19}
H.~Goedbloed, R.~Keppens, and S.~Poedts.
\newblock {\em Magnetohydrodynamics: Of Laboratory and Astrophysical Plasmas}.
\newblock Cambridge University Press, 2019.

\bibitem{GS03}
F.~Golse and L.~Saint-Raymond.
\newblock The {V}lasov-{P}oisson system with strong magnetic field in quasineutral regime.
\newblock {\em Math. Models Methods Appl. Sci.}, 13(5):661--714, 2003.

\bibitem{GVV13}
N.~Grenier, J.-P. Vila, and P.~Villedieu.
\newblock An accurate low-{M}ach scheme for a compressible two-fluid model applied to free-surface flows.
\newblock {\em J. Comput. Phys.}, 252:1--19, 2013.

\bibitem{Hac92}
W.~Hackbusch.
\newblock {\em Elliptic differential equations}, volume~18 of {\em Springer Series in Computational Mathematics}.
\newblock Springer-Verlag, Berlin, 1992.
\newblock Theory and numerical treatment, Translated from the author's revision of the 1986 German original by Regine Fadiman and Patrick D. F. Ion.

\bibitem{HLN+23}
R.~Herbin, J.-C. Latch\'{e}, Y.~Nasseri, and N.~Therme.
\newblock A consistent quasi-second-order staggered scheme for the two-dimensional shallow water equations.
\newblock {\em IMA J. Numer. Anal.}, 43(1):99--143, 2023.

\bibitem{HLS21}
R.~Herbin, J.-C. Latch\'{e}, and K.~Saleh.
\newblock Low {M}ach number limit of some staggered schemes for compressible barotropic flows.
\newblock {\em Math. Comp.}, 90(329):1039--1087, 2021.

\bibitem{HL+20}
R.~Herbin, J.-C. Latch\'{e}, and C.~Zaza.
\newblock A cell-centred pressure-correction scheme for the compressible {E}uler equations.
\newblock {\em IMA J. Numer. Anal.}, 40(3):1792--1837, 2020.

\bibitem{Jin99}
S.~Jin.
\newblock Efficient asymptotic-preserving ({AP}) schemes for some multiscale kinetic equations.
\newblock {\em SIAM J. Sci. Comput.}, 21(2):441--454, 1999.

\bibitem{Jin12}
S.~Jin.
\newblock Asymptotic preserving ({AP}) schemes for multiscale kinetic and hyperbolic equations: a review.
\newblock {\em Riv. Math. Univ. Parma (N.S.)}, 3(2):177--216, 2012.

\bibitem{KT86}
N.~Krall and A.~Trivelpiece.
\newblock {\em Principles of Plasma Physics}.
\newblock International series in pure and applied physics. San Francisco Press, 1986.

\bibitem{TK73}
N.~A. Krall, A.~W. Trivelpiece, and R.~A. Gross.
\newblock Principles of plasma physics.
\newblock {\em American Journal of Physics}, 41(12):1380--1381, 1973.

\bibitem{NJJ+09}
N.~Kwatra, J.~Su, J.~T. Gr\'etarsson, and R.~Fedkiw.
\newblock A method for avoiding the acoustic time step restriction in compressible flow.
\newblock {\em J. Comput. Phys.}, 228(11):4146--4161, 2009.

\bibitem{LS04}
H.~Liu and M.~Slemrod.
\newblock Kd{V} dynamics in the plasma-sheath transition.
\newblock {\em Appl. Math. Lett.}, 17(4):401--410, 2004.

\bibitem{LW07b}
H.~Liu and Z.~Wang.
\newblock Computing multi-valued velocity and electric fields for 1{D} {E}uler-{P}oisson equations.
\newblock {\em Appl. Numer. Math.}, 57(5-7):821--836, 2007.

\bibitem{LW07a}
H.~Liu and Z.~Wang.
\newblock A field-space-based level set method for computing multi-valued solutions to 1{D} {E}uler-{P}oisson equations.
\newblock {\em J. Comput. Phys.}, 225(1):591--614, 2007.

\bibitem{LW07}
H.~Liu and Z.~Wang.
\newblock A field-space-based level set method for computing multi-valued solutions to 1{D} {E}uler-{P}oisson equations.
\newblock {\em J. Comput. Phys.}, 225(1):591--614, 2007.

\bibitem{MG05}
M.~H. Mahdieh and A.~Gavili.
\newblock Hydrodynamics of a non-uniform super-gaussian ion plasma produced by pulsed lasers in an electrostatic field.
\newblock {\em Journal of Physics D: Applied Physics}, 39(1):135, dec 2005.

\bibitem{Mor03}
P.~Mora.
\newblock Plasma expansion into a vacuum.
\newblock {\em Phys. Rev. Lett.}, 90:185002, May 2003.

\bibitem{PV16}
M.~Parisot and J.-P. Vila.
\newblock Centered-potential regularization for the advection upstream splitting method.
\newblock {\em SIAM J. Numer. Anal.}, 54(5):3083--3104, 2016.

\bibitem{PHG+13}
M.~Perego, P.~D. Howell, M.~D. Gunzburger, J.~R. Ockendon, and J.~E. Allen.
\newblock {The expansion of a collisionless plasma into a plasma of lower density}.
\newblock {\em Physics of Plasmas}, 20(5):052101, 05 2013.

\bibitem{Xue14}
X.~Pu.
\newblock Quasineutral limit of the pressureless {E}uler-{P}oisson equation.
\newblock {\em Appl. Math. Lett.}, 30:33--37, 2014.

\bibitem{SS85}
C.~Sack and H.~Schamel.
\newblock Evolution of a plasma expanding into vacuum.
\newblock {\em Plasma Physics and Controlled Fusion}, 27(7):717, jul 1985.

\bibitem{VCB92}
P.~Vitello, C.~Cerjan, and D.~Braun.
\newblock {Flow: A two‐dimensional time‐dependent hydrodynamical ion extraction model}.
\newblock {\em Physics of Fluids B: Plasma Physics}, 4(6):1447--1456, 06 1992.

\bibitem{Wan04}
S.~Wang.
\newblock Quasineutral limit of {E}uler-{P}oisson system with and without viscosity.
\newblock {\em Comm. Partial Differential Equations}, 29(3-4):419--456, 2004.

\bibitem{Win89}
G.~Windisch.
\newblock {\em {$M$}-matrices in numerical analysis}, volume 115 of {\em Teubner-Texte zur Mathematik [Teubner Texts in Mathematics]}.
\newblock BSB B. G. Teubner Verlagsgesellschaft, Leipzig, 1989.
\newblock With German, French and Russian summaries.

\end{thebibliography}
\end{document}